\newtheorem{thm}{Theorem}
\newtheorem{lem}[thm]{Lemma}
\newtheorem{prop}[thm]{Proposition}
\newtheorem{cor}[thm]{Corollary}
\newtheorem*{claim}{Claim}
\theoremstyle{definition}
\newtheorem{rem}[thm]{Remark}
\def\Hom{\mathop{\mathrm{Hom}}\nolimits}
\def\End{\mathop{\mathrm{End}}\nolimits}
\def\Aut{\mathop{\mathrm{Aut}}\nolimits}
\def\Ad{\mathop{\mathrm{Ad}}\nolimits}
\def\ad{\mathop{\mathrm{ad}}\nolimits}
\def\id{\mathop{\mathrm{id}}\nolimits}
\def\im{\mathop{\mathrm{im}}\nolimits}
\def\coker{\mathop{\mathrm{coker}}\nolimits}
\def\GL{\mathop{\mathrm{GL}}\nolimits}
\def\SL{\mathop{\mathrm{SL}}\nolimits}
\def\Gr{\mathop{\mathrm{Gr}}\nolimits}
\def\tr{\mathop{\mathrm{tr}}\nolimits}
\def\max{\mathop{\mathrm{max}}\limits}
\newcommand{\mf}[1]{{\mathfrak{#1}}}
\newcommand{\bb}[1]{{\mathbb{#1}}}
\newcommand{\mca}[1]{{\mathcal{#1}}}
\newcommand{\ol}[1]{{\overline{#1}}}
\newcommand{\df}{{d_{\mathcal{F}}}}
\title{Vanishing of cohomology and parameter rigidity of actions of solvable Lie groups}
\author{Hirokazu Maruhashi\thanks{maruhashihirokazu@gmail.com}}
\date{\empty}
\begin{document}
\maketitle

\begin{abstract}
We give a sufficient condition for parameter rigidity of actions of solvable Lie groups, by vanishing of (uncountably many) first cohomologies of the orbit foliations. In some cases, we can prove that vanishing of finitely many cohomologies is sufficient. For this purpose we use a rigidity property of quasiisometry. 

As an application we prove some actions of 2-step solvable Lie groups on mapping tori are parameter rigid. Special cases of these actions are considered in a paper of Matsumoto and Mitsumatsu \cite{MM}. 

We also remark on the relation between transitive locally free actions of solvable Lie groups and lattices in solvable Lie groups, and apply results in rigidity theory of lattices in solvable Lie groups to construct transitive locally free actions with some properties. 
\end{abstract}

\tableofcontents

\section{Introduction}\label{Intro}
Let $\rho_0$ be a $C^\infty$ right action of a connected simply connected solvable Lie group $S$ on a closed $C^\infty$ manifold $M$. We always assume $\rho_0$ is locally free, that is, every isotropy subgroup is discrete in $S$. Then we have a foliation $\mca{F}$ of $M$ by the orbits of $\rho_0$, which is called the orbit foliation of $\rho_0$. We say that $\rho_0$ is {\em parameter rigid} if any $C^\infty$ locally free action $\rho$ of $S$ on $M$ whose orbit foliation coincides with $\mca{F}$ is {\em parameter equivalent} to $\rho_0$. Here parameter equivalence means the following; there are an automorphism $\Phi$ of $S$ and a diffeomorphism $F$ of $M$ such that 
\begin{equation*}
F\left(\rho_0(x,s)\right)=\rho\left(F(x),\Phi(s)\right)
\end{equation*}
for all $x\in M$ and $s\in S$, and $F$ preserves each leaf of $\mca{F}$ and is $C^0$ homotopic to the identity via $C^\infty$ maps preserving each leaf. 

First we review a criterion for parameter rigidity when $S$ is nilpotent. Instead of using $S$, let $N$ denote the acting group. We have the {\em leafwise cohomology} $H^*(\mca{F})$ of the foliation $\mca{F}$, which is defined in a similar way to the usual de Rham cohomology. Using the action $\rho_0$ we can define  a canonical injection $H^*(\mf{n})\hookrightarrow H^*(\mca{F})$. We will always use Fraktur for the corresponding Lie algebras. The author of this article proved in \cite{Ma} and \cite{Ma3} the following: 

\begin{thm}\label{nil}
$\rho_0$ is parameter rigid if and only if $H^1(\mca{F})=H^1(\mf{n})$. 
\end{thm}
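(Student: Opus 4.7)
The plan is to parametrize locally free $N$-actions with orbit foliation $\mca{F}$ by $\mf{n}$-valued leafwise $1$-forms via a Maurer-Cartan construction. The action $\rho_0$ determines a framing of $T\mca{F}$ by its fundamental vector fields, whose dual is an $\mf{n}$-valued leafwise $1$-form $\omega_0\colon T\mca{F}\to\mf{n}$ satisfying $\df\omega_0+\tfrac{1}{2}[\omega_0,\omega_0]=0$. Pointwise-isomorphism solutions of this equation are in bijection with locally free $N$-actions whose orbit foliation is $\mca{F}$, and parameter equivalence translates into the existence of a leaf-preserving diffeomorphism $F$, $C^0$-homotopic to the identity, and a $\Phi\in\Aut(N)$ with $F^*\omega=\Phi_*\circ\omega_0$. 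With this dictionary the theorem becomes a statement about normalizing such $\omega$.

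For sufficiency I would induct on the nilpotency class of $\mf{n}$. The abelian case is the heart of the argument: here the Maurer-Cartan equation reduces to $\df\omega=0$, so $\omega-\omega_0$ is a closed $\mf{n}$-valued leafwise $1$-form, and by hypothesis its coordinates are componentwise cohomologous to constants; this lets me write $\omega=(\id+c)\circ\omega_0+\df h$ with $c\in\End(\mf{n})$ and $h\colon M\to\mf{n}$, so that $\Phi=\id+c$ (along a path from $\omega_0$ to $\omega$ where $\id+tc$ is always invertible) and the leafwise flow of $-h$ give the desired equivalence. For the inductive step I would use the central extension $0\to\mf{z}\to\mf{n}\to\ol{\mf{n}}\to 0$: project $\omega$ to an $\ol{\mf{n}}$-valued leafwise $1$-form, apply the inductive hypothesis after showing that $H^1(\mca{F})=H^1(\mf{n})$ descends to $H^1(\ol{\mca{F}})=H^1(\ol{\mf{n}})$ through a Hochschild--Serre-type filtration of the leafwise de Rham complex, and then handle the remaining $\mf{z}$-component by the abelian case.

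For necessity, given a class $[\alpha]\in H^1(\mca{F})\setminus H^1(\mf{n})$ represented by a closed leafwise $1$-form $\alpha$, I would manufacture a genuine perturbation. Fix a nonzero central $X_0\in\mf{z}(\mf{n})$ and set $\omega_t=\omega_0+t\alpha\otimes X_0$; centrality of $X_0$ forces $[\omega_0,\alpha\otimes X_0]=0$, so $\omega_t$ still solves Maurer-Cartan, and for small $t$ it remains pointwise an isomorphism, integrating to a smooth family $\rho_t$ of actions with orbit foliation $\mca{F}$. If $\rho_t$ were parameter equivalent to $\rho_0$ for all small $t$ through diffeomorphisms $F_t$ and automorphisms $\Phi_t$, differentiating $F_t^*\omega_t=\Phi_{t\,*}\circ\omega_0$ at $t=0$ would write $\alpha\otimes X_0$ as $D\circ\omega_0-\mca{L}_V\omega_0$ with $D\in\mathrm{Der}(\mf{n})$ and $V$ a leafwise vector field, forcing $[\alpha]\in H^1(\mf{n})$ and contradicting the choice of $\alpha$.

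The main obstacle I anticipate is the inductive step for sufficiency: extracting the cohomological control on $\ol{\mf{n}}$ from the hypothesis on $\mf{n}$ requires a careful spectral sequence argument for the central extension, and the gauge transformations produced at successive stages must be combined into a single diffeomorphism that is $C^0$-homotopic to the identity, not merely leaf-preserving. A secondary subtlety is packaging the necessity argument so that the differentiation at $t=0$ captures the obstruction cleanly at the level of cohomology classes, ruling out equivalence for a whole one-parameter family of perturbations rather than for just a single $t$.
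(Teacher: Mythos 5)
First, a caveat: this paper does not prove Theorem \ref{nil} at all --- it is imported from \cite{Ma} and \cite{Ma3} --- so the only in-paper material to compare against is the proof of the generalization, Theorem \ref{SC}, whose specialization to $\mf{s}=\mf{n}=\mf{h}$ (where every $\ad\circ\varphi$ is the zero representation and $\Gr_{\mf{n}}(\mf{n})$ is a direct sum of trivial modules) recovers exactly the sufficiency direction of Theorem \ref{nil}. Measured against that, your sufficiency strategy is the right one in spirit but has two concrete problems. The inductive step via the central extension $0\to\mf{z}\to\mf{n}\to\ol{\mf{n}}\to0$ is ill-posed as written: there is no manifold carrying a ``quotient foliation'' $\ol{\mca{F}}$, since the orbits of the central subgroup need not fibre $M$. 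The paper's proof of Theorem \ref{SC} avoids this entirely by fixing a vector-space splitting $\mf{n}=\bigoplus_i V_i$ adapted to the lower central series and inductively making the $V_i$--components of $\omega$ constant by gauge transformations $P=e^h$, all on the \emph{same} foliation $\mca{F}$; moreover the Hochschild--Serre descent you anticipate as the main obstacle is unnecessary, because each graded piece $\mf{n}^i/\mf{n}^{i+1}$ is a trivial module, so the cohomology needed at every stage is literally $H^1(\mca{F})\otimes V_i=H^1(\mf{n})\otimes V_i$, i.e.\ the hypothesis itself. Separately, in your abelian base case the invertibility of $\id+c$ is not established by deforming along $\id+tc$; it requires an actual argument, namely the one in Lemma \ref{so we can} of the paper (surjectivity of the period map of $\Theta$ plus boundedness of $h$ on the compact $M$).

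The necessity direction has the more serious gap, and it is the one you flag without resolving. Knowing only that each $\rho_t$ is parameter equivalent to $\rho_0$ gives you a family $(F_t,\Phi_t)$ with no regularity in $t$ whatsoever, so ``differentiating $F_t^*\omega_t=\Phi_{t*}\circ\omega_0$ at $t=0$'' is not available. The natural repair --- projecting the equivalence relation $\omega_t=\Ad(P_t^{-1})\Phi_{t*}\omega_0+P_t^*\Theta$ to $\mf{n}/[\mf{n},\mf{n}]$ and comparing classes in $H^1(\mca{F})\otimes\mf{n}/[\mf{n},\mf{n}]$, which does work for each fixed $t\neq0$ without any differentiation --- only detects the perturbation when your central vector $X_0$ survives in the abelianization. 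But in general every nonzero central element may lie in $[\mf{n},\mf{n}]$ (the Heisenberg algebra already has $\mf{z}(\mf{n})=[\mf{n},\mf{n}]$), in which case the perturbation is invisible at the abelianized level and one must carry the comparison through the whole lower central series, controlling how $\Ad(P_t^{-1})$ and $\Phi_{t*}$ mix the graded pieces. So the necessity half needs either this finer filtered argument or some other device; as written it does not close.
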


This theorem reduces proving parameter rigidity --- which looks a nonlinear problem at first sight --- to a linear one, that is, calculation of first cohomology. 

However this criterion is no longer true when the acting group is a general solvable Lie group. In fact there are a solvable Lie group $S$ and a lattice $\Gamma$ in $S$ for which the natural transitive action $\Gamma\backslash S\curvearrowleft S$ is not parameter rigid while $H^1(\Gamma\backslash S)=H^1(\mf{s})$. We will see it in Section \ref{rel} together with an example of a locally parameter rigid action of a contractible group which is not parameter rigid. These are obtained by looking at the relations between transitive locally free actions and lattices in solvable Lie groups. 

Therefore, the formulation should be changed and we will do it by using twisted leafwise cohomologies. Let us return to the first notations. For the action $M\stackrel{\rho_0}{\curvearrowleft}S$ and a representation $\pi$ of $\mf{s}$ on a finite dimensional real vector space $V$, we will define the leafwise cohomology $H^*\left(\mca{F};\mf{s}\stackrel{\pi}{\curvearrowright}V\right)$ of $\mca{F}$ with coefficient $\pi$ as follows. Let $\omega_0$ denote the canonical 1--form of $\rho_0$, ie for any $x\in M$, $(\omega_0)_x\colon T_x\mca{F}\to\mf{s}$ is the inverse of the derivative at the identity of the map $S\to M$ which sends $g$ to $\rho_0(x,g)$. This is a leafwise $\mf{s}$--valued 1--form satisfying 
\begin{equation*}
d_\mca{F}\omega_0+[\omega_0,\omega_0]=0, 
\end{equation*}
where $d_\mca{F}$ is the leafwise exterior derivative of $\mca{F}$. By composing with $\pi$ we get an $\End(V)$--valued leafwise 1-form $\pi\omega_0$ satisfying 
\begin{equation*}
d_\mca{F}\pi\omega_0+\left[\pi\omega_0,\pi\omega_0\right]=0. 
\end{equation*}
Therefore, the trivial vector bundle $M\times V\to M$ carries the flat leafwise connection whose connection form is $\pi\omega_0$. Here this connection form is relative to any global frame of the bundle which has constant $V$ components. The square of the exterior derivative with respect to this connection on the space of leafwise $V$--valued forms $\Omega^*(\mca{F};V)$ is zero by flatness of the connection. So we obtain the cohomology $H^*\left(\mca{F};\mf{s}\stackrel{\pi}{\curvearrowright}V\right)$. On the other hand the cohomology $H^*\left(\mf{s};\mf{s}\stackrel{\pi}{\curvearrowright}V\right)$ of the Lie algebra $\mf{s}$ with coefficient $\pi$ is defined from the complex $\Hom\left(\bigwedge^*\mf{s},V\right)$. Consider the map 
\begin{equation*}
\Hom\left(\bigwedge^*\mf{s},V\right)\hookrightarrow\Omega^*(\mca{F};V)
\end{equation*}
mapping $\varphi$ to $\omega_0^*\varphi$, where $\omega_0^*$ means pullback by $\omega_0$. This is a cochain map, so that it induces the map between cohomologies. 

\begin{prop}\label{injective}
The induced map $H^*\left(\mf{s};\mf{s}\stackrel{\pi}{\curvearrowright}V\right)\to H^*\left(\mca{F};\mf{s}\stackrel{\pi}{\curvearrowright}V\right)$ is injective. 
\end{prop}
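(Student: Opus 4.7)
My approach is to convert the relation $\omega_0^*\varphi=\nabla\psi$ into a pointwise identity on $M$ by evaluating on fundamental vector fields, and then average against an $S$-invariant probability measure to kill the resulting derivative remainder. Let $k\geq 1$ and take $\varphi\in\Hom(\bigwedge^k\mf{s},V)$ a cocycle for the Chevalley--Eilenberg differential $d_{\mf{s}}$ satisfying $\omega_0^*\varphi=\nabla\psi$ for some $\psi\in\Omega^{k-1}(\mca{F};V)$. I introduce
\[
F\colon M\to\Hom(\textstyle\bigwedge^{k-1}\mf{s},V),\qquad F(x)(X_0,\dots,X_{k-2}):=\psi_x(X_0^*|_x,\dots,X_{k-2}^*|_x),
\]
where $X^*$ denotes the fundamental vector field of $X\in\mf{s}$. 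Plugging $(X_0^*|_x,\dots,X_{k-1}^*|_x)$ into $\omega_0^*\varphi=\nabla\psi$, using $\omega_0(X^*)=X$, the right-action identity $[X^*,Y^*]=[X,Y]^*$, and the global invariant formula for $d_\mca{F}$, the bracket contribution in $d_\mca{F}\psi$ together with the $\pi\omega_0\wedge\psi$ term combine into $d_{\mf{s}}F(x)$, yielding the pointwise identity
\[
\varphi(X_0,\dots,X_{k-1}) = (d_{\mf{s}}F(x))(X_0,\dots,X_{k-1}) + \sum_{i=0}^{k-1}(-1)^i (X_i^*G_i)(x),
\]
where $G_i(y):=F(y)(X_0,\dots,\widehat{X_i},\dots,X_{k-1})$, valid for every $x\in M$.

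Now I invoke amenability: $S$ is a connected solvable Lie group and hence amenable, so the continuous action $\rho_0$ on the compact manifold $M$ admits an $S$-invariant Borel probability measure $\mu$ (Markov--Kakutani applied to the $S$-action on probability measures). Integrating the displayed identity against $\mu$, pulling the linear operator $d_{\mf{s}}$ outside the integral, and observing that for each $i$ one has $\int_M X_i^*G_i\,d\mu=0$ because $\mu$ is invariant under the flow $\rho_0(\,\cdot\,,\exp tX_i)$ of $X_i^*$, I conclude
\[
\varphi = d_{\mf{s}}\Bigl(\int_M F\,d\mu\Bigr),
\]
so $\varphi$ is a Chevalley--Eilenberg coboundary and the induced map on cohomology is injective (the case $k=0$ being trivial since $\Omega^{-1}(\mca{F};V)=0$).

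The main obstacle I expect is the first step, namely checking that the bracket terms in $d_\mca{F}\psi$ together with the $\pi\omega_0\wedge\psi$ contribution really do assemble on the nose into $d_{\mf{s}}F(x)$; this is just sign-bookkeeping with the invariant exterior-derivative formula, but it is where the argument could go wrong. Everything else is standard: $F$ takes values in a finite-dimensional vector space so $\int_M F\,d\mu\in\Hom(\bigwedge^{k-1}\mf{s},V)$ is an honest element, and the integrated Lie derivatives vanish by invariance of $\mu$.
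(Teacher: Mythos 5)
Your proposal is correct and is essentially the paper's own argument: both exploit amenability of $S$ to produce a $\rho_0$-invariant probability measure $\mu$ and average leafwise forms over $M$ against it, your $\int_M F\,d\mu$ being exactly the paper's splitting map $r(\psi)$, and your pointwise identity plus the vanishing $\int_M X_i^*G_i\,d\mu=0$ being precisely the content of the paper's claim that $r$ is a cochain map with $r\circ\omega_0^*=\id$. The only point the paper treats more carefully than you do is the interchange of the derivative along the flow with the integral (its Lemma on uniform convergence of $\frac{e^{tX}\xi-\xi}{t}$), which justifies $\int_M X_i^*G_i\,d\mu=0$; this is standard on a compact manifold, so there is no gap.
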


We will prove this in Section \ref{ppin}. Hereafter we shall regard $H^*\left(\mf{s};\mf{s}\stackrel{\pi}{\curvearrowright}V\right)$ as a subspace of $H^*\left(\mca{F};\mf{s}\stackrel{\pi}{\curvearrowright}V\right)$. 

Next we will specify which representations are needed for our sufficient condition. Let $\mf{n}$ be the nilradical of $\mf{s}$, that is, the largest nilpotent ideal of $\mf{s}$. Then $\mf{n}$ contains $[\mf{s},\mf{s}]$ since $[\mf{s},\mf{s}]$ is a nilpotent ideal. Take any subspace $\mf{h}$ satisfying 
\begin{equation*}
[\mf{s},\mf{s}]\subset\mf{h}\subset\mf{n}. 
\end{equation*}
Since $\mf{h}$ is a nilpotent ideal, the descending central series of $\mf{h}$ terminates at some term: $\mf{h}\supset\mf{h}^2\supset\cdots\supset\mf{h}^d\supset 0$. The adjoint representation $\ad$ of $\mf{s}$ has the following invariant filtration: 
\begin{equation*}
\mf{s}\stackrel{\ad}{\curvearrowright}\mf{s}\supset\mf{h}\supset\mf{h}^2\supset\cdots\supset\mf{h}^d\supset 0. 
\end{equation*}
We take the associated graded quotient of this filtration: 
\begin{equation*}
\mf{s}\stackrel{\ad}{\curvearrowright}\Gr_\mf{h}(\mf{s})=\mf{s}/\mf{h}\oplus\bigoplus_{i=1}^d\mf{h}^i/\mf{h}^{i+1}. 
\end{equation*}
Note that $\mf{s}/\mf{h}$ is a direct sum of trivial representations. On this associated graded quotient $\mf{h}$ acts trivially, so that we actually have a representation $\mf{s}/\mf{h}\stackrel{\ad}{\curvearrowright}\Gr_\mf{h}(\mf{s})$. Let $\mca{X}$ denote the set of all surjective homomorphisms $\varphi\colon\mf{s}\to\mf{s}/\mf{h}$ of Lie algebras. An element $\varphi\in\mca{X}$ is just a surjective linear map which vanishes on $[\mf{s},\mf{s}]$ since $\mf{s}/\mf{h}$ is abelian. For any $\varphi\in\mca{X}$ we form a representation $\mf{s}\stackrel{\ad\circ\varphi}{\curvearrowright}\Gr_\mf{h}(\mf{s})$. 

\begin{thm}[Sufficient condition for parameter rigidity]\label{SC}
If 
\begin{equation*}
H^1(\mca{F};\mf{s}\stackrel{\ad\circ\varphi}{\curvearrowright}\Gr_\mf{h}(\mf{s}))=H^1(\mf{s};\mf{s}\stackrel{\ad\circ\varphi}{\curvearrowright}\Gr_\mf{h}(\mf{s}))
\end{equation*}
for some $\mf{h}$ and any $\varphi\in\mca{X}$, then $\rho_0$ is parameter rigid. 
\end{thm}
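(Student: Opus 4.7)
Given any $C^\infty$ locally free action $\rho$ of $S$ on $M$ whose orbit foliation is $\mca{F}$, the goal is to produce $\Phi\in\Aut(S)$ and a leaf-preserving diffeomorphism $F\colon M\to M$, smoothly isotopic to the identity through leaf-preserving maps, with $F(\rho_0(x,s))=\rho(F(x),\Phi(s))$. Let $\omega_0$ and $\omega$ be the canonical leafwise 1-forms of $\rho_0$ and $\rho$; both satisfy the Maurer--Cartan equation on $\mca{F}$. Since they trivialize the same bundle $T\mca{F}$ one may write $\omega=P\omega_0$ for a smooth $P\colon M\to\GL(\mf{s})$, and the search for $(F,\Phi)$ amounts to deforming $P$, via pullback along a leafwise flow together with a change of $\Phi$, onto a constant map to $\Aut(\mf{s})\subset\GL(\mf{s})$. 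I would run this deformation inductively along the filtration
\begin{equation*}
\mf{s}\supset\mf{h}\supset\mf{h}^2\supset\cdots\supset\mf{h}^d\supset 0.
\end{equation*}

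\textbf{Abelian level.} Composing $\omega_0$ and $\omega$ with the projection $\mf{s}\to\mf{s}/\mf{h}$ yields two leafwise closed $\mf{s}/\mf{h}$-valued 1-forms (since $[\mf{s},\mf{s}]\subset\mf{h}$), whose difference represents a class in $H^1(\mca{F})\otimes\mf{s}/\mf{h}$, naturally sitting inside $H^1\bigl(\mca{F};\mf{s}\stackrel{\ad\circ\varphi_0}{\curvearrowright}\Gr_\mf{h}(\mf{s})\bigr)$ for $\varphi_0$ the quotient map (the $\mf{s}/\mf{h}$ summand is a trivial $\mf{s}$-representation). By the hypothesis this cohomology equals $H^1\bigl(\mf{s};\mf{s}\stackrel{\ad\circ\varphi_0}{\curvearrowright}\Gr_\mf{h}(\mf{s})\bigr)$, so the class admits a Lie-algebra representative modulo a coboundary; the former can be absorbed into a choice of $\Phi$ and the latter into a first leaf-preserving diffeomorphism, delivering agreement modulo $\mf{h}$.

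\textbf{Inductive descent along $\mf{h}^i/\mf{h}^{i+1}$.} Assume inductively that after successive corrections the pulled-back $\omega$ equals $\Phi_*\omega_0$ modulo $\mf{h}^i$. Projecting the residual $\mf{h}^i$-valued discrepancy to $\mf{h}^i/\mf{h}^{i+1}$ and reducing the Maurer--Cartan equations of $\omega_0$ and $\omega$ modulo $\mf{h}^{i+1}$, one finds that this projection is closed for the flat leafwise connection $(\ad\circ\varphi_i)\omega_0$, where $\varphi_i\in\mca{X}$ is the surjection $\mf{s}\to\mf{s}/\mf{h}$ recording the $\mf{s}/\mf{h}$-component of the adjusted action (the representation factors through $\mf{s}/\mf{h}$ because $\mf{h}$ acts trivially on each graded piece $\mf{h}^i/\mf{h}^{i+1}$). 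The hypothesis applied to this specific $\varphi_i$ splits the obstruction into a Lie-algebra part, absorbed by a further modification of $\Phi$, and a coboundary, implemented by a leafwise flow, yielding agreement modulo $\mf{h}^{i+1}$. After $d$ steps one arrives at $F^*\omega=\Phi_*\omega_0$, which is parameter equivalence.

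\textbf{Main obstacle.} The essential difficulty is to identify the obstruction at each level precisely as a class in the correct twisted $H^1(\mca{F};\cdots)$ with the correct $\varphi_i\in\mca{X}$, while carefully tracking the nonlinear cross terms of the Maurer--Cartan equation which mix filtration levels; the fact that $\varphi_i$ depends on $\rho$ and on the previous corrections is exactly what forces the hypothesis to be required for \emph{all} $\varphi\in\mca{X}$ and not just the quotient map. A second, more global, concern is to verify that the ``Lie-algebra piece'' of each class can be realized by an honest automorphism of $S$ (using simple connectedness of $S$) and that the accumulated coboundary corrections assemble into a smooth, leaf-preserving diffeomorphism $F$ that is $C^0$ homotopic to the identity through $C^\infty$ leaf-preserving maps, rather than merely producing a formal/infinitesimal solution.
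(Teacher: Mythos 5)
Your overall architecture --- induction along the filtration $\mf{s}\supset\mf{h}\supset\mf{h}^2\supset\cdots$, making $\omega$ agree with a constant cocycle one graded piece at a time by invoking the twisted $H^1$ at each level --- is exactly the paper's strategy. However, two essential ingredients are missing, and one assertion in your inductive step is false as stated.

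First, after the abelian level you obtain a linear map $\varphi\colon\mf{s}\to\mf{s}/\mf{h}$ vanishing on $[\mf{s},\mf{s}]$, but the hypothesis is only available for \emph{surjective} $\varphi$, i.e.\ for $\varphi\in\mca{X}$. You never argue that $\varphi=\varphi_\rho$ is surjective, and this is not formal: the paper proves it (Lemma \ref{so we can}) by passing to the cocycle $a(x,\cdot)$ with $\rho_0(x,s)=\rho(x,a(x,s))$, using the identity $\int_1^s\varphi_\rho\Theta+h(\rho_0(x,s))-h(x)=\int_1^{a(x,s)}\Theta^0$, the bijectivity of $s\mapsto a(x,s)$, and the boundedness of $h$ on the compact manifold $M$. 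Without this, the induction cannot even start. (A related slip: the twisting homomorphism does not change from level to level. Since $\Ad$ acts trivially on the abelian quotient $\mf{s}/\mf{h}$, the gauge transformations $\Ad(P)(\cdot)-P^*\Theta$ used at later stages do not disturb the $V_0$-component, so every level uses the single $\varphi_\rho$ fixed at the abelian step; the quantifier over all of $\mca{X}$ is needed only because $\varphi_\rho$ depends on the unknown $\rho$.)

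Second, your claim that the level-$i$ discrepancy, projected to $\mf{h}^i/\mf{h}^{i+1}$, is closed for the connection $(\ad\circ\varphi)\omega_0$ is not correct. Writing $\omega=\varphi\omega_0+\varphi^1\omega_0+\cdots+\varphi^{k-1}\omega_0+\omega^k+\cdots$, the $V_k$-component of the Maurer--Cartan equation gives $D\omega^k=\omega_0^*\psi$ for a \emph{constant} $2$-form $\psi$ that is generally nonzero, because the partially built map $\varphi+\varphi^1+\cdots+\varphi^{k-1}$ is not yet a Lie algebra homomorphism and its bracket cross terms survive. This is precisely the ``nonlinear cross term'' you flag as the main obstacle, but you offer no mechanism to remove it. The paper's mechanism is the retraction $r\colon\Omega^*(\mca{F};V)\to\Hom\bigl(\bigwedge^*\mf{s},V\bigr)$ built by averaging against an $S$-invariant Borel probability measure (which exists by amenability of $S$ and compactness of $M$); since $r$ is a cochain map with $r\circ\omega_0^*=\id$, one gets $\psi=Dr(\omega^k)$, hence $D\bigl(\omega^k-\omega_0^*r(\omega^k)\bigr)=0$, and only then does the hypothesis $H^1(\mca{F};\pi_k\varphi)=H^1(\mf{s};\pi_k\varphi)$ apply. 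This averaging operator (the content of Proposition \ref{injective}) is the key technical device your proposal lacks.
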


In general, $[\mf{s},\mf{s}]\neq\mf{n}$ and we choose $\mf{h}$ between them depending on the situations. For instance, we take $\mf{h}=\mf{n}$ in Theorem \ref{volume} while we take $\mf{h}=[\mf{s},\mf{s}]$ in Theorem \ref{ogs}. 

The assumption requires vanishing of the cohomologies {\em for every $\varphi\in\mca{X}$} and this sometimes causes a difficulty when applying the theorem.\footnote{We will see such a situation in Maruhashi \cite{Ma3}. } But it is very likely that for most cases vanishing of the cohomologies for almost all $\varphi\in\mca{X}$ is unnecessary, that is, vanishing for finitely many $\varphi$ is sufficient. To prove such results we have two approaches. First one is the method appearing in Matsumoto and Mitsumatsu \cite[Section 6]{MM} which uses a volume form of the manifold $M$. Here we generalize this method to obtain the next theorem: 

\begin{thm}\label{volume}
Assume the following four conditions. 
\begin{itemize}
\item $\mf{s}$ is nonunimodular. 
\item $\dim\mf{s}/\mf{n}=1$. 
\item $M$ is orientable. 
\item $H^1\left(\mca{F};\mf{s}\stackrel{\ad}{\curvearrowright}\Gr_\mf{n}(\mf{s})\right)=H^1\left(\mf{s};\mf{s}\stackrel{\ad}{\curvearrowright}\Gr_\mf{n}(\mf{s})\right)$. 
\end{itemize}
Then $\rho_0$ is parameter rigid. 
\end{thm}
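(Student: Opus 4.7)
The plan is to invoke Theorem~\ref{SC} with $\mf{h}=\mf{n}$, reducing the problem to showing $H^1(\mca{F};\mf{s}\stackrel{\ad\circ\varphi}{\curvearrowright}\Gr_\mf{n}(\mf{s}))=H^1(\mf{s};\mf{s}\stackrel{\ad\circ\varphi}{\curvearrowright}\Gr_\mf{n}(\mf{s}))$ for every $\varphi\in\mca{X}$. Since $\mf{n}$ acts trivially on $\Gr_\mf{n}(\mf{s})$ by $\ad$, the canonical projection $p_0\colon\mf{s}\to\mf{s}/\mf{n}$ satisfies $\ad\circ p_0=\ad$ on $\Gr_\mf{n}(\mf{s})$, so the fourth hypothesis of the theorem is precisely the cohomological equality at $\varphi=p_0$, and the real task is to extend it to arbitrary $\varphi\in\mca{X}$.

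The assumption $\dim\mf{s}/\mf{n}=1$ sharply constrains $\mca{X}$: fixing $Y\in\mf{s}\setminus\mf{n}$ and identifying $\mf{s}/\mf{n}\cong\bb{R}\overline{Y}$, every $\varphi\in\mca{X}$ has the form $\varphi=c\cdot p_0+\lambda$ with $c\in\bb{R}^\times$ and $\lambda\in(\mf{n}/[\mf{s},\mf{s}])^*$ extended by zero on $\bb{R}Y$. Decomposing $\Gr_\mf{n}(\mf{s})=\mf{s}/\mf{n}\oplus W$ with $W=\bigoplus_i\mf{n}^i/\mf{n}^{i+1}$, the summand $\mf{s}/\mf{n}$ carries the trivial $\mf{s}$-representation for every $\varphi$, so the hypothesis restricted to it already delivers $H^1(\mca{F};\bb{R})=H^1(\mf{s};\bb{R})$, uniformly in $\varphi$. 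On $W$, each $X\in\mf{s}$ acts as the scalar $\varphi(X)$ times the fixed endomorphism $A=\ad_{\overline{Y}}|_W$, and the twisted connection form becomes $\varphi(\omega_0)\cdot A$; this is where the different choices of $\varphi$ yield genuinely different complexes.

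The key technical step uses the volume form, following the spirit of \cite{MM}. Fix an orientation and a volume form $\mu_M$ on $M$, and define the divergence 1-cocycle $\sigma\colon\mf{s}\to C^\infty(M)$ by $L_{X^*}\mu_M=\sigma(X)\mu_M$; its class in $H^1(\mca{F};\bb{R})$ equals $-\tr\ad\in(\mf{s}/[\mf{s},\mf{s}])^*$, which is nonzero by nonunimodularity and factors through $\mf{s}/\mf{n}$. I plan to use $\sigma$ to construct a gauge function $g=\exp(hA)\colon M\to\GL(W)$ (with $h\in C^\infty(M)$, or a block-wise version to accommodate the spectrum of $A$) that conjugates the twisted differential for $\ad\circ\varphi$ on $W$ into that for $\ad\circ(c\cdot p_0)$, after which the $c$-rescaling is absorbed by a Lie algebra automorphism and the problem reduces to $\varphi=p_0$. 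The main obstacle will be this gauge step: the obstruction to solving $d_\mca{F}h=\lambda(\omega_0)$ globally is a class in $H^1(\mca{F};\bb{R})$ represented by a linear functional on $\mf{s}/[\mf{s},\mf{s}]$, and the identity $H^1(\mca{F};\bb{R})=H^1(\mf{s};\bb{R})$ (already extracted from the hypothesis) places it in the image of $\Hom(\mf{s},\bb{R})$; nonunimodularity then ensures that subtracting an appropriate multiple of $\sigma$ trivializes the class, completing the transfer and hence the verification of the hypothesis of Theorem~\ref{SC}.
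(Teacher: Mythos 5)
Your overall strategy---verifying the hypothesis of Theorem~\ref{SC} for \emph{every} $\varphi\in\mca{X}$ by transferring the assumed vanishing at $\varphi=p_0$ to a general $\varphi=c\,p_0+\lambda$ via a gauge transformation---does not work, and the failure is precisely at the step you flag as the ``main obstacle.'' To conjugate the twisted differential for $\ad\circ\varphi$ into the one for $\ad\circ(c\,p_0)$ by $g=\exp(hA)$ you need $d_\mca{F}h=\lambda(\omega_0)$ exactly, i.e.\ the class $[\lambda(\omega_0)]\in H^1(\mca{F})$ must vanish. But by Proposition~\ref{injective} the map $H^1(\mf{s})\to H^1(\mca{F})$ is injective, and under $H^1(\mca{F})=H^1(\mf{s})$ the class of $\lambda(\omega_0)$ is just $\lambda\in\left(\mf{s}/[\mf{s},\mf{s}]\right)^*$ itself; it is nonzero whenever $\lambda\neq0$. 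Subtracting multiples of the divergence cocycle $\sigma$ cannot help: $[\sigma]=-\tr\ad$ factors through $\mf{s}/\mf{n}$, hence is proportional to $p_0$, while $\lambda$ is supported on $\mf{n}/[\mf{s},\mf{s}]$, so no multiple of $[\sigma]$ cancels a nonzero $\lambda$. Thus the gauge step only ever produces a constant $g$ and the reduction to $\varphi=p_0$ never happens. (A smaller error: $c$ need not lie in $\bb{R}^\times$; surjectivity of $\varphi$ only forces $(c,\lambda)\neq(0,0)$, and the case $c=0$ is exactly one your rescaling-by-an-automorphism step cannot handle.)

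The missing idea is that one should not try to prove the cohomological equality for all $\varphi\in\mca{X}$ --- the twisted complexes for different $\varphi$ are genuinely non-isomorphic and there is no reason they should all satisfy it. Instead the paper uses Theorem~\ref{SC2}: it suffices to have the equality for the $\varphi$ that actually occur as $\varphi_\rho$ for some $\rho\in A(\mca{F},S)$, and the content of the proof is that under the stated hypotheses $\varphi_\rho=p_0$ always. This is where the volume form enters, in a completely different way from your proposal: one builds leafwise volume forms $\Omega_0,\Omega$ from $\rho_0,\rho$ and a transverse form $\Omega_{\mathrm{tr}}$ (orientability), normalizes $\ol{\Omega}\wedge\ol{\Omega_{\mathrm{tr}}}$ to be $\rho$-invariant using $\bb{R}$-valued cocycle rigidity, and derives
\begin{equation*}
\left(\rho_0^s\right)^*\left(\ol{\Omega_0}\wedge\ol{\Omega_{\mathrm{tr}}}\right)=\det\Ad\left(s^{-1}\right)\det\Ad\left(a(x,s)\right)\,\ol{\Omega_0}\wedge\ol{\Omega_{\mathrm{tr}}},
\end{equation*}
whose factor equals $\exp\left\{\tr\ad\int_1^s\left(\varphi_\rho-p_0\right)\Theta+\tr\ad\left(h\left(\rho_0(x,s)\right)-h(x)\right)\right\}$ by Lemma~\ref{important eq}. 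If $\varphi_\rho\neq p_0$, nonunimodularity and $\dim\mf{s}/\mf{n}=1$ make this factor unbounded along $s=e^{TX}$, contradicting preservation of total volume. You will need to restructure your argument around pinning down $\varphi_\rho$ rather than enlarging the set of $\varphi$ for which the cohomology vanishes.
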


Recall that $\mf{s}$ is unimodular if and only if $\tr\ad X=0$ for every $X\in\mf{s}$. Note that we have used $\mf{n}$ for $\mf{h}$. In this theorem only vanishing for the natural projection $\mf{s}\twoheadrightarrow\mf{s}/\mf{n}\in\mca{X}$ is required by assuming the first three conditions. 

The next approach is a new one in which we use large scale geometry of solvable Lie groups. This can also be applied to unimodular groups. We can obtain several theorems using this method. Here we appeal to a theorem of Ogasawara \cite{Og}. 
For $i=1,\ldots,k$, let 
\begin{equation*}
A_i=
\begin{pmatrix}
\alpha^{(i)}_1&&\\
&\ddots&\\
&&\alpha^{(i)}_n
\end{pmatrix}
\end{equation*}
be a diagonal matrix with positive diagonal entries and put $A(t)=A_1^{t_1}\cdots A_k^{t_k}$ for $t=(t_1,\ldots,t_k)\in\bb{R}^k$. Here $A_i^{t_i}$ means 
\begin{equation*}
\begin{pmatrix}
\left(\alpha^{(i)}_1\right)^{t_i}&&\\
&\ddots&\\
&&\left(\alpha^{(i)}_n\right)^{t_i}
\end{pmatrix}. 
\end{equation*}
Consider $S_A=\bb{R}^n\rtimes_{A(t)}\bb{R}^k$. Let $W_S$ denote the image of the natural map $\Aut(S)\to\GL\left(S/[S,S]\right)$. We take $\mf{h}=[\mf{s},\mf{s}]$ in the next theorem. 

\begin{thm}\label{ogs}
Assume $S=S_A$ satisfies the next condition: For any $j$ there exists $i$ such that $\alpha^{(i)}_j\neq 1$. If 
\begin{equation*}
H^1\left(\mca{F};\mf{s}\stackrel{\ad\circ\varphi}{\curvearrowright}\Gr_{[\mf{s},\mf{s}]}(\mf{s})\right)=H^1\left(\mf{s};\mf{s}\stackrel{\ad\circ\varphi}{\curvearrowright}\Gr_{[\mf{s},\mf{s}]}(\mf{s})\right)
\end{equation*}
for all $\varphi\in W_S$, then $\rho_0$ is parameter rigid. 
\end{thm}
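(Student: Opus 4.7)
The plan is to reduce the theorem to Theorem~\ref{SC} applied with $\mathfrak{h}=[\mathfrak{s},\mathfrak{s}]$. In that setting, $\mca{X}$ identifies with $\GL\bigl(\mathfrak{s}/[\mathfrak{s},\mathfrak{s}]\bigr)$, and $W_S$ is generally a much smaller subgroup, so Theorem~\ref{SC} cannot be applied directly. The point will be that, thanks to Ogasawara's quasi-isometric rigidity of $S_A$, every deformation of $\rho_0$ produces a linearization $\varphi$ that already lies in $W_S$, at which moment the assumed vanishing kicks in.

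Given a smooth locally free $\rho$ on $M$ with the same orbit foliation as $\rho_0$, fix $x\in M$. Since $S$ is simply connected, the orbit maps $g\mapsto\rho_0(x,g)$ and $g\mapsto\rho(x,g)$ lift to diffeomorphisms $\alpha_x,\beta_x\colon S\to\widetilde{L_x}$ onto the universal cover of the leaf through $x$. Their composition $h_x:=\alpha_x^{-1}\circ\beta_x\colon S\to S$ encodes how the two parametrizations of the same leaf differ. Pulling back a fixed smooth Riemannian metric on $M$ via $\alpha_x$ and via $\beta_x$ gives two Riemannian metrics on $S$, each of which, by local freeness and compactness of $M$, is quasi-isometric to the standard left-invariant metric on $S_A$. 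Consequently $h_x$ is a self quasi-isometry of $S_A$.

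Ogasawara's theorem \cite{Og}, under the standing hypothesis that for every $j$ some $\alpha^{(i)}_j\neq 1$, asserts that every self quasi-isometry of $S_A$ induces on $S/[S,S]$ a linear map lying in $W_S$. The $\varphi$ produced by the proof of Theorem~\ref{SC} from the deformation $\rho$ should be identified with the map induced by $h_x$ on the abelianization, since $\varphi$ is extracted from the asymptotic behavior of the cocycle $P(x,\cdot)$ defined by $\rho(x,s)=\rho_0(x,P(x,s))$, and this cocycle is essentially $h_x$. Hence $\varphi\in W_S$, the assumed cohomology vanishing applies, and the construction of a parameter equivalence $(F,\Phi)$ now proceeds exactly as in the proof of Theorem~\ref{SC}.

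The main obstacle is the precise matching between the cohomological $\varphi$ extracted by the proof of Theorem~\ref{SC} and the asymptotic linear part of the geometric quasi-isometry $h_x$, so that Ogasawara's theorem can be brought to bear. One must also verify that the pulled back Riemannian metrics from $\rho_0$ and $\rho$ are genuinely quasi-isometric to the standard left-invariant metric on $S_A$ (not merely coarse equivalent), and that the resulting element of $W_S$ does not depend in a bad way on the basepoint $x$ nor on the choice of lifts $\alpha_x,\beta_x$.
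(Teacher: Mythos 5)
Your overall strategy is the paper's: reduce to the refined sufficient condition (Theorem \ref{SC2}), use quasiisometric rigidity of $S_A$ to force the cohomologically defined $\varphi_\rho$ into $W_S$, and then invoke the assumed vanishing. The map you call $h_x$ is (up to inversion) exactly the cocycle $a(x,\cdot)$ defined by $\rho_0(x,s)=\rho(x,a(x,s))$, and the paper does prove it is a biLipschitz self-map of $S$ for any left invariant metric. But there is a genuine gap at the point you yourself flag as ``the main obstacle.'' Ogasawara's theorem, as used here, is \emph{not} a statement about arbitrary self quasiisometries of $S_A$: its hypothesis is that the map is \emph{fiber respecting over a given linear map} $\varphi\colon\bb{R}^k\to\bb{R}^k$ of the base of $S_A\to\bb{R}^k$, and its conclusion is that this particular $\varphi$ satisfies $PA(t)=A(\varphi(t))P$, i.e.\ lies in $W_S$. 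So the entire burden of the proof is to show that $a(x,\cdot)$ is fiber respecting precisely over the map $\tilde\varphi_\rho$ induced by the cohomological $\varphi_\rho$; without that identification, knowing that $h_x$ is a quasiisometry (or even that it covers \emph{some} element of $W_S$) says nothing about $\varphi_\rho$. The paper's remark at the end of Section \ref{LSG} (the group $\bb{R}^2\rtimes_{R(t)}\bb{R}$) shows that being a fiber respecting biLipschitz map over a linear map imposes, in general, no restriction at all, so the specific structure of $S_A$ and the specific covered map both matter.

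The missing step is supplied in the paper by Lemma \ref{important eq} together with Proposition \ref{key prop}. The vanishing of $H^1(\mca{F})=H^1(\mf{s})$ (which is part of your hypothesis, since $\id\in W_S$) yields $\omega^0=\varphi_\rho\omega_0+\df h$ with $h\colon M\to V_0$ smooth, and integrating this along orbits gives the exact identity
\begin{equation*}
\bar\varphi_\rho\left(\int_1^s q\Theta\right)+h\left(\rho_0(x,s)\right)-h(x)=\int_1^{a(x,s)}\Theta^0 .
\end{equation*}
Compactness of $M$ bounds $h$, so after exponentiating one gets $d\bigl(\tilde\varphi_\rho\tilde q(s),\tilde p(a(x,s))\bigr)<C$ uniformly in $s$; combined with the biLipschitz property of $a(x,\cdot)$ and the fact that the projections $S\to S/K_\rho$, $S\to S/H$ are distance respecting (Corollary \ref{metric}), this is exactly the fiber respecting property over $\tilde\varphi_\rho$. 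One also needs Lemma \ref{so we can} (surjectivity of $\varphi_\rho$, again via boundedness of $h$) to know $\varphi_\rho\in\mca{X}$ in the first place. Once you add these ingredients your argument closes up and coincides with the paper's proof; as written, the proposal asserts the conclusion of the key proposition rather than proving it.
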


First $\varphi\in W_S$ is regarded as an element of $\mca{X}$ in the following way. By pulling back through $\exp\colon\mf{s}/[\mf{s},\mf{s}]\simeq S/[S,S]$, $\varphi\colon S/[S,S]\to S/[S,S]$ is regarded as $\varphi\colon\mf{s}/[\mf{s},\mf{s}]\to\mf{s}/[\mf{s},\mf{s}]$. Composing with the natural projection $\mf{s}\twoheadrightarrow\mf{s}/[\mf{s},\mf{s}]$ we get $\varphi\colon\mf{s}\twoheadrightarrow\mf{s}/[\mf{s},\mf{s}]\in\mca{X}$. The second remark is about the condition: For any $j$ there exists $i$ such that $\alpha^{(i)}_j\neq 1$. This is equivalent to $[S,S]=\bb{R}^n$. In this theorem, if we put some genericity condition on $A$, the parameter set $W_S$ becomes finite. 

Here we have used a theorem of Ogasawara, but we can also use other theorems treating rigidity of quasiisometry. We will give other applications in a forthcoming paper. 

Finally we give an application of this method to get parameter rigid actions. We describe somewhat generalized version of the usual construction of suspensions of actions. Let $M_0\stackrel{\rho_0}{\curvearrowleft}H$ be a smooth locally free action of a connected Lie group $H$ on a closed $C^{\infty}$ manifold $M_0$, let $G\stackrel{\Phi}{\curvearrowright}H$ be a smooth action of a connected Lie group $G$ on $H$ by automorphisms and let $\Gamma\stackrel{\lambda}{\curvearrowright}M_0$ be a smooth action of a cocompact lattice $\Gamma$ of $G$ on $M_0$. Assume these three actions satisfy the following compatibility condition: 
\begin{equation*}
\lambda\left(\gamma,\rho_0(x,h)\right)=\rho_0\left(\lambda(\gamma,x),\Phi_\gamma(h)\right)
\end{equation*}
for any $\gamma\in\Gamma, x\in M_0$ and $h\in H$. Let $H\rtimes_\Phi G$ be the semidirect product whose multiplication is defined by 
\begin{equation*}
(h_1,g_1)(h_2,g_2)=\left(h_1\Phi_{g_1}(h_2),g_1g_2\right)
\end{equation*}
for $h_1$, $h_2\in H$ and $g_1$, $g_2\in G$. We define two actions $\Gamma\curvearrowright M_0\times G\curvearrowleft H\rtimes_\Phi G$. The action of $\Gamma$ is defined diagonally: 
\begin{equation*}
\gamma(x,g)=\left(\lambda(\gamma,x),\gamma g\right)
\end{equation*}
for $\gamma\in\Gamma$, $x\in M_0$ and $g\in G$. The action of $H\rtimes_\Phi G$ is defined like the multiplication rule of a semidirect product: 
\begin{equation*}
(x,g)(h,g^\prime)=\left(\rho_0\left(x,\Phi_g(h)\right),gg^\prime\right)
\end{equation*}
for $x\in M_0$, $g$, $g^\prime\in G$ and $h\in H$. Then these two actions commute by the compatibility condition. So we get an action 
\begin{equation*}
\Gamma\backslash(M_0\times G)\stackrel{\rho}{\curvearrowleft}H\rtimes_\Phi G. 
\end{equation*}
This is locally free and the fiber bundle $\Gamma\backslash(M_0\times G)\to\Gamma\backslash G$ with a typical fiber $M_0$ is $\left(H\rtimes_\Phi G\to G\right)$--equivariant. The case in which $H$ is trivial is the usual construction of suspensions. 

We deal with a special case of the above construction. Consider $A\in\GL(n,\bb{Z})$ and an $A$--invariant subspace $V$ of $\bb{R}^n$. Take a one parameter subgroup $\Phi\colon\bb{R}\to\GL(V)$ satisfying $\Phi_1=A|_V$. With respect to the above notation, we let 
\begin{equation*}
M_0=\bb{T}^n=\bb{Z}^n\backslash\bb{R}^n,\quad H=V,\quad G=\bb{R}\quad\text{and}\quad\Gamma=\bb{Z}, 
\end{equation*}
and given three actions are $\bb{Z}^n\backslash\bb{R}^n\curvearrowleft V$ by translations, $\bb{Z}\curvearrowright\bb{T}^n$ by $1\in\bb{Z}$ acting as $A$ and $\bb{R}\stackrel{\Phi}{\curvearrowright}V$, which are compatible in the above sense. The acting group $S=V\rtimes_\Phi\bb{R}$ is solvable and two actions $\bb{Z}\curvearrowright\bb{T}^n\times\bb{R}\curvearrowleft S$ are defined by 
\begin{equation*}
1(x,t)=(Ax,t+1)\quad\text{and}\quad(x,t)(v,s)=\left(x+\Phi_t(v),t+s\right). 
\end{equation*}
The resulting action is $M=\bb{Z}\backslash\left(\bb{T}^n\times\bb{R}\right)\curvearrowleft S$, where $M$ is the mapping torus of $A$. Note that the images $\Phi_\bb{Z}$ and $\Phi_\bb{R}$ of $\bb{Z}$ and $\bb{R}$ by $\Phi$ lie in a real algebraic group $\GL(V)$.  

\begin{thm}\label{mapping torus}
We assume the following four conditions: 
\begin{itemize}
\item $V$ is Diophantine in $\bb{R}^n$. 
\item $\Phi_\bb{Z}$ is Zariski dense in $\Phi_\bb{R}$, meaning $\overline{\Phi_\bb{Z}}=\overline{\Phi_\bb{R}}$. 
\item $1$ is not an eigenvalue of $A|_V$. 
\item $A|_V$ has an eigenvalue whose absolute value is not $1$. 
\end{itemize}
Then $M\curvearrowleft S$ is parameter rigid. 
\end{thm}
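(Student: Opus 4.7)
The plan is to invoke Theorem \ref{ogs} with $\mf{h}=[\mf{s},\mf{s}]$. Writing the Lie algebra as $\mf{s}=V\oplus\bb{R}$ with $\phi=\left.\frac{d}{dt}\Phi_t\right|_{t=0}$, the third hypothesis forces $\phi$ to be invertible, so $[\mf{s},\mf{s}]=\phi(V)=V$ coincides with the nilradical. In the setting of Theorem \ref{ogs} (with $k=1$), this is precisely the Ogasawara condition that $\alpha^{(1)}_j\neq 1$ for every $j$, after simultaneous complex diagonalisation of $\Phi_t$. The graded module becomes $\Gr_{[\mf{s},\mf{s}]}(\mf{s})=\bb{R}\oplus V$ with $\mf{s}$ acting trivially on the first summand and through $(v,s)\mapsto s\phi$ on $V$, while the fourth hypothesis ensures $\{\Phi_t\}$ is unbounded in $\GL(V)$, as demanded by the underlying quasi-isometric rigidity.

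The second step is to make the parameter space $W_S$ finite. Any $\Psi\in\Aut(S)$ preserves $V=[S,S]$ and induces a pair $(U,\lambda)\in\GL(V)\times\bb{R}^\times$ subject to $U\Phi_t U^{-1}=\Phi_{\lambda t}$. The Zariski density $\overline{\Phi_\bb{Z}}=\overline{\Phi_\bb{R}}$ promotes this topological identity to an algebraic one, so $\lambda$ must permute the complex spectrum of $\phi$; combined with the presence of an eigenvalue off the unit circle this confines $\lambda$ to a finite set, whence $W_S$ is finite and only finitely many cohomology vanishing statements need to be checked.

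The third and technical step is the computation of the remaining cohomologies $H^1(\mca{F};\mf{s}\stackrel{\ad\circ\varphi}{\curvearrowright}\Gr_{[\mf{s},\mf{s}]}(\mf{s}))$ for $\varphi\in W_S$, by Fourier analysis along the $\bb{T}^n$ fibres of the mapping-torus fibration $M\to\bb{Z}\backslash\bb{R}$. By Proposition \ref{injective} the zero Fourier mode produces exactly $H^*(\mf{s};\cdot)$, so it suffices to kill every non-zero mode. For frequency $k\in\bb{Z}^n\setminus\{0\}$ the coboundary equation splits into a small-divisor equation along $V\subset\mf{s}$, controlled by the Diophantine hypothesis on $V$ with only a bounded loss of derivatives, and an ODE along the $\bb{R}$-direction whose $(\bb{Z},A^*)$-equivariance yields a resonance-free linear problem because $1$ is not an eigenvalue of $A|_V$.

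The main obstacle is the interplay between the single-frequency Diophantine bound on $V$ and the exponential dispersion of $k\mapsto(A^*)^j k$ along $A^*$-orbits in $\bb{Z}^n$: a naive estimate loses derivatives at each orbit step. Interleaving the Diophantine estimate with tame control on the $A^*$-dynamics uniformly over all orbits is where the Zariski density hypothesis plays its essential role, by ensuring that the discrete $A$-periodicity faithfully reflects the continuous one-parameter flow $\Phi_\bb{R}$, so that the cocycle equations integrated along $\bb{Z}$ produce smooth solutions via a tame estimate in the spirit of Matsumoto--Mitsumatsu \cite{MM}.
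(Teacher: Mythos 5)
Your high-level skeleton --- use large-scale geometry to pin down $\varphi_\rho$ to finitely many candidates, then verify finitely many cohomology vanishings --- matches the paper's strategy, but there are two genuine gaps. First, Theorem \ref{ogs} does not apply here: it is stated only for $S_A=\bb{R}^n\rtimes_{A(t)}\bb{R}^k$ with the $A_i$ \emph{diagonal with positive real entries}, whereas in Theorem \ref{mapping torus} the group is $S=V\rtimes_\Phi\bb{R}$ with $\Phi_t=e^{t\ad^0T}$, and $\ad^0T$ is allowed to have nontrivial Jordan blocks and non-real eigenvalues (the paper emphasizes exactly this extra generality over Matsumoto--Mitsumatsu). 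Your phrase ``after simultaneous complex diagonalisation of $\Phi_t$'' does not repair this, since no such diagonalisation over $\bb{R}$ with positive entries need exist. The paper instead feeds Proposition \ref{key prop} into the Farb--Mosher theorem (Theorem \ref{farb mosher}), which compares \emph{absolute Jordan forms} of $\Phi_1$ and $\Phi_c$ and, together with the hypothesis that $A|_V$ has an eigenvalue off the unit circle, forces $c=\pm1$. Your second step (finiteness of $W_S$ via automorphisms of $S$) is likewise not how the reduction works: what must be constrained is $\varphi_\rho$ for an arbitrary $\rho\in A(\mca{F},S)$, not the automorphism group of $S$, and the conclusion one needs is precisely $c=\pm1$, which then leaves exactly three representations to handle: the trivial one and $\mf{s}\stackrel{\pm\ad^0}{\curvearrowright}V$.

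Second, and more seriously, the cohomology computation is not actually carried out. Your Fourier-mode sketch defers the entire difficulty to the final paragraph (``the main obstacle is the interplay \dots''), and the proposed resolution misattributes the role of Zariski density: in the paper, $\overline{\Phi_\bb{Z}}=\overline{\Phi_\bb{R}}$ is used only at the very end, to upgrade the discrete intertwining relation $\Phi_{\pm k}\circ\alpha=\alpha\circ\Phi_k$ ($k\in\bb{Z}$), which comes out of $\ker\bar{Q}$, to the continuous relation for all $t\in\bb{R}$, so that differentiation yields $\pm\ad^0T\circ\alpha=\alpha\circ\ad^0T$; it has nothing to do with tame estimates or small divisors along $A^*$-orbits. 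The paper avoids your ``main obstacle'' entirely by a Mayer--Vietoris decomposition $M=U_1\cup U_2$ into two product pieces $\bb{T}^n\times(\text{interval})$: on each piece the restriction map to a torus slice induces an isomorphism $H^1(\mca{F}|_{U_i};\pi)\simeq H^1(\mca{G};W)=V^*\otimes W$ (this is where the Diophantine hypothesis enters, via the known vanishing for linear foliations of $\bb{T}^n$, plus explicit ODE solving in the $t$-direction), and the global cohomology is then $\coker P\oplus\ker Q$ for explicitly computed linear maps $P$, $Q$; the hypothesis that $1$ is not an eigenvalue of $A|_V$ kills $\ker(\id-e^{\pi(T)})$ in the $\pm\ad^0$ cases and also guarantees $[\mf{s},\mf{s}]=V$. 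As written, your proposal would need either a complete tame-estimate argument for the suspension foliation (which you have not supplied) or the Mayer--Vietoris reduction to make the third step rigorous.
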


The Diophantus condition means that there are a basis $v_1,\ldots,v_p$ of $V$ and positive constants $C$ and $\alpha$ satisfying 
\begin{equation*}
\max_i\left|m\cdot v_i\right|\geq\frac{C}{\|m\|^\alpha}
\end{equation*}
for all $m\in\bb{Z}^n\setminus\{0\}$. 

This theorem\footnote{In a future work, I plan to generalize this and prove it with a simpler,  different calculation. } is a generalization of a theorem of Matsumoto and Mitsumatsu \cite{MM}. They deal with the case when $A$ is hyperbolic with its characteristic polynomial irreducible over $\bb{Q}$ and without eigenvalues on the interval $(-1,0)$, $V$ is the intersection with $\bb{R}^n$ of the direct sum of all eigenspaces of $A\colon\bb{C}^n\to\bb{C}^n$ with eigenvalues of modulus less than $1$, and $\Phi$ is the most naturally defined one. In this case $A$ is diagonalizable since its characteristic polynomial has no multiple roots. So they consider the suspension Anosov flow of the Anosov diffeomorphism $A$ on $\bb{T}^n$ and its weak stable foliation which will be the orbit foliation of the action of $V\rtimes_\Phi\bb{R}$. In this setting the group $V\rtimes_\Phi\bb{R}$ is always non unimodular and Theorem \ref{volume} can be applied. 

In our situation $A$ may have nontrivial Jordan blocks, may have the reducible characteristic polynomial such as 
$A=
\begin{pmatrix}
A_1&\\
&A_2
\end{pmatrix}
$
for some $A_1\in\GL(k,\bb{Z})$, $A_2\in\GL(n-k,\bb{Z})$ or $V$ can be some smaller part of stable directions or a mixture of some stable and unstable directions, in particular $V\rtimes_\Phi\bb{R}$ can be unimodular. The method we use to prove Theorem \ref{mapping torus} is that using large scale geometry instead of that by Matsumoto and Mitsumatsu. 

\section*{Acknowledgements}
This is a part of the Ph.D. Thesis of the author. Most part of the paper was written when the author was a Research Fellow of the Japan Society for the Promotion of Science. I would like to thank the advisor, Masayuki Asaoka, and an anonymous referee, who pointed out the surjectivity in Proposition \ref{corresp}.

\section{General sufficient condition for parameter rigidity}

\subsection{Proof of Proposition \ref{injective}}\label{ppin}
As in Introduction, let $M\stackrel{\rho_0}{\curvearrowleft}S$ be an action and $\mf{s}\stackrel{\pi}{\curvearrowright}V$ be a representation. Let $S\stackrel{\Pi}{\curvearrowright}V$ denote the representation whose differentiation is $\pi$. We define an action $M\times V\curvearrowleft S$ by 
\begin{equation*}
(x,v)s=\left(\rho_0(x,s),\Pi\left(s^{-1}\right)v\right)
\end{equation*}
for $(x,v)\in M\times V$ and $s\in S$ and this turns $M\times V\to M$ into an $S$--equivariant vector bundle. We equip $V$ with a norm which comes from an inner product. Then the space $\Gamma_{\mathrm cont}(V)$ of all continuous sections of the trivial vector bundle $M\times V\to M$ is a Banach space with the supremum norm, and on it, we have a natural representation $S\curvearrowright\Gamma_{\mathrm cont}(V)$ defined by 
\begin{equation*}
(s\xi)(x)=\Pi(s)\xi\left(\rho_0(x,s)\right)
\end{equation*}
for $s\in S$, $\xi\in\Gamma_{\mathrm cont}(V)$ and $x\in M$. We regard $V$ as a subspace of $\Gamma_{\mathrm cont}(V)$ consisting of constant sections. 

\begin{lem}
There is an $S$--equivariant continuous linear map $\mu\colon\Gamma_{\mathrm cont}(V)\to V$ which is the identity on $V$. 
\end{lem}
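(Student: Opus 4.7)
The plan is to exploit amenability of $S$. Since $S$ is a connected solvable Lie group, it is amenable, and hence admits a right-invariant mean $m$ on the space of bounded continuous real-valued functions on $S$, satisfying $m(f(\cdot\,t))=m(f)$ for all $t\in S$. By choosing a basis of $V$ and applying $m$ componentwise, one obtains a linear map, still denoted $m$, from bounded continuous $V$-valued functions on $S$ to $V$, which is norm-decreasing, sends each constant function $v$ to $v$, and satisfies the analogous right-invariance.

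Fix a basepoint $x_0\in M$. For $\xi\in\Gamma_{\mathrm{cont}}(V)$, the function $f_\xi\colon S\to V$ defined by $f_\xi(s)=\xi(\rho_0(x_0,s))$ is bounded and continuous (since $M$ is compact), so I would define
\begin{equation*}
\mu(\xi):=m(f_\xi)\in V.
\end{equation*}
Linearity is clear, and $\|\mu(\xi)\|\leq\|f_\xi\|_\infty\leq\|\xi\|_\infty$ gives continuity. For a constant section $v$ the function $f_\xi$ is identically $v$, so $\mu(v)=v$. The equivariance $\mu(t\xi)=\Pi(t)\mu(\xi)$ is the only nontrivial check: unfolding the definitions, and using that $\rho_0$ is a right action,
\begin{equation*}
f_{t\xi}(s)=(t\xi)(\rho_0(x_0,s))=\Pi(t)\xi(\rho_0(x_0,st))=\Pi(t)f_\xi(st),
\end{equation*}
so $\mu(t\xi)=\Pi(t)\,m\bigl(s\mapsto f_\xi(st)\bigr)=\Pi(t)\mu(\xi)$ by right-invariance of $m$.

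The construction itself is elementary once the mean is available, so the only substantive input is amenability. The main obstacle is therefore to justify the existence of a right-invariant mean on bounded continuous functions for a connected solvable Lie group, which is standard (e.g.\ by the Markov--Kakutani fixed-point theorem applied iteratively to abelian subquotients along the derived series of $S$, or by direct citation of general theory of amenable locally compact groups).
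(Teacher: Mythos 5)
Your proof is correct, but it takes a genuinely different route from the paper. The paper converts amenability of $S$ into a $\rho_0$--invariant Borel probability measure $\mu$ on the compact manifold $M$ (via the fixed--point characterization of amenability) and defines $\mu(\xi)=\int_M\xi\,d\mu$; equivariance then follows from invariance of the measure under $\rho_0$ exactly as your right--invariance step does for the mean. You instead stay entirely on the group: you restrict a section to a single orbit through a basepoint $x_0$ and apply a right--invariant mean on $C_b(S)$, so the manifold plays no role beyond guaranteeing boundedness of $f_\xi$. Both arguments have amenability as their only substantive input, and your cocycle computation $f_{t\xi}(s)=\Pi(t)f_\xi(st)$ is the correct unfolding of the right--action convention. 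What the paper's version buys is a $\mu$ given by an actual countably additive invariant measure on $M$, which the author reuses later (the footnote in Section \ref{proof of Theorem SC} discusses how $\varphi_\rho$ may depend on the choice of this invariant measure); your $\mu$ would instead depend on the choice of mean and of basepoint, and is supported, so to speak, on the closure of one orbit. What your version buys is independence from compactness of the phase space in the following sense: it only needs boundedness and continuity of $\xi$ along orbits, so it would survive in settings where an invariant probability measure is awkward to produce directly. One cosmetic remark: your inequality $\|\mu(\xi)\|\leq\|\xi\|_\infty$ is in fact attainable (test $m(g)$ against unit functionals $\lambda\in V^*$ and use positivity of the scalar mean), and is even slightly sharper than the constant $\dim V$ appearing in the paper's estimate; either way only boundedness is needed.
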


\begin{proof}
Since $S$ is amenable and $M$ is compact, there exists a $\rho_0$--invariant Borel probability measure $\mu$ on $M$. We define $\mu\colon\Gamma_{\mathrm cont}(V)\to V$ by $\xi\mapsto\int_M\xi d\mu$. Then it is easy to show $\left\|\int_M\xi d\mu\right\|\leq\dim V\left\|\xi\right\|_\infty$ and $\mu(s\xi)=\Pi(s)\mu(\xi)$ for all $\xi\in\Gamma_{\mathrm cont}(V)$ and $s\in S$. 
\end{proof}

Using the map $\mu$, we define a map $r\colon\Omega^*(\mca{F};V)\to\Hom\left(\bigwedge^*\mf{s},V\right)$ which, on the $p$-th degree, takes $\eta$ to $r(\eta)$ defined by 
\begin{equation*}
r(\eta)(X_1,\ldots,X_p)=\mu\left(\eta(X_1,\ldots,X_p)\right)
\end{equation*}
for $X_1,\ldots,X_p\in\mf{s}$. Here $X_1,\ldots,X_p$ are regarded as vector fields on $M$ using $\rho_0$. Namely $X_i$ is regarded as a vector field $x\mapsto(\omega_0)_x^{-1}(X_i)$ tangent to the foliation $\mca{F}$, where $\omega_0$ is the canonical $1$--form of $\rho_0$. We will always do this identification during the paper. 

Before proving that $r$ is a cochain map, let us look at our connection closer. Denote by $\nabla$ the covariant derivative with respect to the flat leafwise connection of the trivial bundle $M\times V\to M$, defined in Section \ref{Intro}. Since the connection form is $\pi\omega_0$, we have $\nabla\xi=\df\xi+\pi\omega_0\xi$ for a section $\xi\in\Omega^0(\mca{F};V)$. Let $D\colon\Omega^p(\mca{F};V)\to\Omega^{p+1}(\mca{F};V)$ denote the covariant exterior derivative arising from $\nabla$. Then it is easy to check that $D\eta=\df\eta+\pi\omega_0\wedge\eta$ for $\eta\in\Omega^p(\mca{F};V)$. 

Next let us see which sections are parallel, that is, sections $\xi\in\Omega^0(\mca{F};V)$ satisfying $\nabla\xi=0$. Fix a point $x_0\in M$ and a vector $v\in V$. Define $\xi_0$ locally along the leaf passing through $x_0$ by $\xi_0\left(\rho_0(x_0,s)\right)=\Pi\left(s^{-1}\right)v$ for $s\in S$ close to the identity. Then for any $y=\rho_0(x_0,s_0)$ with small $s_0\in S$ and any $Y\in\mf{s}$, we have 
\begin{align*}
\nabla_{\left.\frac{d}{dt}\rho_0\left(y,e^{tY}\right)\right\vert_{t=0}}\xi_0&=\df\xi_0\left(\left.\frac{d}{dt}\rho_0\left(y,e^{tY}\right)\right\vert_{t=0}\right)+\pi(Y)\xi_0(y)\\
&=\left.\frac{d}{dt}\Pi\left(e^{-tY}s_0^{-1}\right)v\right\vert_{t=0}+\pi(Y)\Pi\left(s_0^{-1}\right)v\\
&=0. 
\end{align*}
Therefore $\nabla\xi_0=0$ and this means the directions of orbits of the action $M\times V\curvearrowleft S$ is horizontal for the leafwise connection. So we have 
\begin{align*}
\left(\nabla_X\xi\right)(x)&=\lim_{t\rightarrow0}\frac{\Pi\left(e^{tX}\right)\xi\left(\rho_0\left(x,e^{tX}\right)\right)-\xi(x)}{t}\\
&=\lim_{t\rightarrow0}\frac{\left(e^{tX}\xi\right)(x)-\xi(x)}{t}
\end{align*}
for any $\xi\in\Omega^0(\mca{F};V)$, $X\in\mf{s}$ and $x\in M$. 

\begin{lem}\label{uniformly}
$\frac{e^{tX}\xi-\xi}{t}$ converges uniformly to $\nabla_X\xi$ as $t\rightarrow0$. 
\end{lem}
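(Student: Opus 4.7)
The pointwise identity $(\nabla_X\xi)(x) = \lim_{t\to 0}\tfrac{(e^{tX}\xi)(x)-\xi(x)}{t}$ was already derived in the paragraph just above the lemma; my plan is to upgrade this to uniform convergence on $M$ by rewriting the difference quotient as an integral remainder and then invoking compactness of $M$.

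Concretely, I would set $g(t,x) := (e^{tX}\xi)(x) = \Pi(e^{tX})\xi(\rho_0(x,e^{tX}))$. Since $\xi$ is smooth and $\rho_0$, $\Pi$ are smooth, $g(\cdot,x)$ is smooth for each fixed $x$. Differentiating at $t=t_0$ by the product rule produces two terms which, using the identity $\nabla_X\xi = X\xi + \pi(X)\xi$ (i.e.\ $\nabla\xi = \df\xi + \pi\omega_0\xi$ applied in the direction of $X$, since $\omega_0(X)=X$), combine to $\Pi(e^{t_0X})(\nabla_X\xi)(\rho_0(x,e^{t_0X})) = (e^{t_0 X}\nabla_X\xi)(x)$. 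The ordinary fundamental theorem of calculus in $V$ then gives, pointwise in $x$,
\begin{equation*}
(e^{tX}\xi)(x) - \xi(x) - t(\nabla_X\xi)(x) = \int_0^t \bigl[(e^{sX}\nabla_X\xi)(x) - (\nabla_X\xi)(x)\bigr]\,ds,
\end{equation*}
so that dividing by $t$ and taking the supremum in $x$ yields
\begin{equation*}
\left\|\frac{e^{tX}\xi - \xi}{t} - \nabla_X\xi\right\|_\infty \leq \sup_{s\in[0,t]}\bigl\|e^{sX}\nabla_X\xi - \nabla_X\xi\bigr\|_\infty.
\end{equation*}

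To close the argument I would observe that for any continuous section $\eta$ of $M\times V\to M$, the map $(s,x)\mapsto\Pi(e^{sX})\eta(\rho_0(x,e^{sX}))$ is continuous on the compact product $[-1,1]\times M$, hence uniformly continuous. Specializing to $\eta=\nabla_X\xi$ gives $\sup_{s\in[0,t]}\|e^{sX}\nabla_X\xi-\nabla_X\xi\|_\infty\to 0$ as $t\to 0$, which is exactly what is needed. There is really no obstacle to this plan — the content is the fundamental theorem of calculus plus a compactness argument; the only place to be careful is not to conflate pointwise with uniform convergence at the first step, and the integral remainder handles this by converting the question into uniform continuity of a single continuous function on a compact set.
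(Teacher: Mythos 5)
Your proof is correct, and it takes a mildly but genuinely different route from the paper's. The paper simply writes $(e^{tX}\xi)(x)=\sum_i f_i(t,x)v_i$ in a basis of $V$ and applies the second-order Taylor expansion with Lagrange remainder, so the error term is $\frac{t}{2}\sum_i f_i''(\theta_{i,x,t},x)v_i$ and uniformity follows from boundedness of $f_i''$ on the compact set $[-1,1]\times M$; no identification of the $t$-derivative at general $t_0$ is needed. You instead prove the intermediate identity $\partial_t\big\vert_{t=t_0}(e^{tX}\xi)(x)=(e^{t_0X}\nabla_X\xi)(x)$ (which is correct: $e^{(t_0+u)X}=e^{t_0X}e^{uX}$ reduces it to the pointwise computation already done in the paper at the base point $\rho_0(x,e^{t_0X})$), use the first-order integral remainder, and conclude by uniform continuity of $(s,x)\mapsto(e^{sX}\nabla_X\xi)(x)$ on $[-1,1]\times M$. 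What your version buys is slightly weaker regularity demands --- you only need the first $t$-derivative to be jointly continuous rather than the second to be bounded --- and a formula that identifies the error term intrinsically as an average of $e^{sX}\nabla_X\xi-\nabla_X\xi$; what it costs is the extra semigroup computation, which the paper's coordinate-wise Taylor expansion sidesteps entirely. One cosmetic point: for $t<0$ your bound should read $\sup_{s\text{ between }0\text{ and }t}$, but this does not affect the argument.
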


\begin{proof}
Take a basis $v_1,\ldots,v_l$ of $V$ and write $\left(e^{tX}\xi\right)(x)=\sum_{i=1}^lf_i(t,x)v_i$ for some real valued functions $f_i$. Then $\left(\nabla_X\xi\right)(x)=\sum_{i=1}^lf_i^\prime(0,x)v_i$. The function $f_i(t,x)$ has the Taylor expansion 
\begin{equation*}
f_i(t,x)=f_i(0,x)+tf_i^\prime(0,x)+\frac{t^2}{2}f_i^{\prime\prime}\left(\theta_{i,x,t},x\right), 
\end{equation*}
where $\theta_{i,x,t}$ is a number between $0$ and $t$. Since 
\begin{equation*}
\frac{\left(e^{tX}\xi\right)(x)-\xi(x)}{t}-\left(\nabla_X\xi\right)(x)=\frac{t}{2}\sum_{i=1}^lf_i^{\prime\prime}\left(\theta_{i,x,t},x\right)v_i
\end{equation*}
and $f_i^{\prime\prime}(\theta,x)$ is bounded for $-1\leq\theta\leq 1$ and $x\in M$, we get the conclusion. 
\end{proof}

Recall that we have a cochain map $\omega_0^*\colon\Hom\left(\bigwedge^*\mf{s},V\right)\hookrightarrow\Omega^*(\mca{F};V)$. 

\begin{lem}
The map $r$ is a cochain map and $r\circ\omega_0^*$ is the identity. Therefore $\omega_0^*$ induces the injective map between cohomologies. 
\end{lem}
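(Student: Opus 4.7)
The plan is to prove the two claims in the lemma separately and then deduce injectivity formally. For $r \circ \omega_0^* = \id$, take $\varphi \in \Hom(\bigwedge^p\mf{s}, V)$ and evaluate $\omega_0^*\varphi$ on the fundamental vector fields $X_1,\ldots,X_p$: since $(\omega_0)_x$ inverts the derivative of $s\mapsto\rho_0(x,s)$ at the identity, $(\omega_0^*\varphi)(X_1,\ldots,X_p)$ is the \emph{constant} section of $M\times V$ with value $\varphi(X_1,\ldots,X_p)$. Because $\mu$ is the identity on $V\subset\Gamma_{\mathrm{cont}}(V)$, applying $\mu$ returns $\varphi(X_1,\ldots,X_p)$, so $r(\omega_0^*\varphi)=\varphi$.

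For the cochain-map property, the key technical input is the identity
\begin{equation*}
\mu(\nabla_X \xi)=\pi(X)\mu(\xi)\qquad(\xi\in\Omega^0(\mca{F};V),\ X\in\mf{s}).
\end{equation*}
This is where I would spend the effort: by Lemma~\ref{uniformly}, $\tfrac{1}{t}(e^{tX}\xi-\xi)\to\nabla_X\xi$ \emph{uniformly}, so the continuous linear map $\mu$ can be pulled inside the limit, and then $S$-equivariance $\mu(e^{tX}\xi)=\Pi(e^{tX})\mu(\xi)$ converts the limit into $\pi(X)\mu(\xi)$. This is really the only analytic step; everything else will be formal.

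With this in hand, I would use the global formula for the covariant exterior derivative,
\begin{equation*}
(D\eta)(X_0,\ldots,X_p)=\sum_i(-1)^i\nabla_{X_i}\eta(X_0,\ldots,\widehat{X_i},\ldots,X_p)+\sum_{i<j}(-1)^{i+j}\eta([X_i,X_j],\ldots),
\end{equation*}
which is valid here because the fundamental vector fields of a right action satisfy $[\widetilde{X},\widetilde{Y}]=\widetilde{[X,Y]}$, so no sign appears when rewriting $[X_i,X_j]$ as a Lie algebra element. Apply $\mu$ to both sides; the first sum becomes $\sum_i(-1)^i\pi(X_i)r(\eta)(X_0,\ldots,\widehat{X_i},\ldots,X_p)$ by the key identity, and the second becomes $\sum_{i<j}(-1)^{i+j}r(\eta)([X_i,X_j],\ldots)$ tautologically. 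The right-hand side is exactly $(d^{\mathrm{CE}}r(\eta))(X_0,\ldots,X_p)$, proving $r\circ D=d^{\mathrm{CE}}\circ r$.

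Finally, the injectivity statement follows formally: $r$ and $\omega_0^*$ are cochain maps with $r\circ\omega_0^*=\id$, so the induced maps on cohomology satisfy $r_*\circ(\omega_0^*)_*=\id$, forcing $(\omega_0^*)_*$ to be injective. The only real obstacle is the interchange of $\mu$ with the differentiation in $t$, and Lemma~\ref{uniformly} is precisely tailored to handle it; everything else is a bookkeeping exercise in the standard formulas for $D$ and the Chevalley--Eilenberg differential.
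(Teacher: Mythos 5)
Your proposal is correct and follows essentially the same route as the paper: the identity $r\circ\omega_0^*=\id$ by direct evaluation on constant sections, the key identity $\mu(\nabla_X\xi)=\pi(X)\mu(\xi)$ obtained by passing $\mu$ through the uniform limit of Lemma~\ref{uniformly} and using $S$--equivariance, and then the "direct calculation" (which you make explicit via the global formula for $D$ and the Chevalley--Eilenberg differential) to conclude that $r$ is a cochain map. The formal deduction of injectivity from $r_*\circ(\omega_0^*)_*=\id$ is exactly the paper's final step.
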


\begin{proof}
Using definitions we verify easily that $r\circ\omega_0^*$ is the identity. 

By Lemma \ref{uniformly}, we have 
\begin{align*}
\mu\left(\nabla_X\xi\right)&=\lim_{t\rightarrow0}\mu\left(\frac{e^{tX}\xi-\xi}{t}\right)\\
&=\lim_{t\rightarrow0}\frac{\Pi\left(e^{tX}\right)\mu(\xi)-\mu(\xi)}{t}\\
&=\pi(X)\mu(\xi) 
\end{align*}
for $X\in\mf{s}$ and $\xi\in\Omega^0(\mca{F};V)$. By this property of $\mu$ and a direct calculation, we see that $r$ is a cochain map. 
\end{proof}

\subsection{Proof of Theorem \ref{SC}}\label{proof of Theorem SC}
Let $M\stackrel{\rho_0}{\curvearrowleft}S$ be an action, $\mca{F}$ be its orbit foliation and $\omega_0$ be the canonical $1$--form of $\rho_0$. The set of all smooth actions $M\curvearrowleft S$ with the orbit foliation $\mca{F}$ is denoted by $A(\mca{F},S)$. We will prove that any $\rho\in A(\mca{F},S)$ is parameter equivalent to $\rho_0$ under the assumption of Theorem \ref{SC}. Let $\omega$ be the canonical $1$--form of $\rho$. To show that $\rho$ is parameter equivalent to $\rho_0$, it is sufficient to prove the existence of some $C^\infty$ map $P\colon M\to S$ and some endomorphism $\Phi\colon S\to S$ satisfying 
\begin{equation*}
\omega=\Ad\left(P^{-1}\right)\Phi_*\omega_0+P^*\Theta, 
\end{equation*}
where $\Theta\in\Omega^1(S;\mf{s})$ is the left Maurer--Cartan form of $S$. See, for instance, Asaoka \cite[Proposition 1.4.4]{A}. In other words, we will show that the $\mf{s}$--valued cocycle $\omega$ is cohomologous to a constant $\mf{s}$--valued cocycle $\Phi_*\omega_0$. We call $\omega$ an $\mf{s}$--valued cocycle because it is the infinitesimal version of a usual $S$--valued cocycle over $\rho_0$. In our situation, usual $K$--valued cocycles over $\rho_0$ for some connected simply connected Lie group $K$ are in one-to-one correspondence with elements $\eta\in\Omega^1(\mca{F};\mf{k})$ satisfying 
\begin{equation*}
\df\eta+[\eta,\eta]=0. 
\end{equation*}
Two $\mf{k}$--valued cocycles $\eta_1$, $\eta_2$ are cohomologous if and only if 
\begin{equation*}
\eta_1=\Ad\left(P^{-1}\right)\eta_2+P^*\Theta_K
\end{equation*}
for some smooth $P\colon M\to K$, where $\Theta_K$ denotes the left Maurer--Cartan form on $K$. Also, $\eta$ is a constant cocycle if and only if $\eta=\Phi_*\omega_0$ for some homomorphism $\Phi\colon S\to K$. See Asaoka \cite[Section 1.4.1]{A} for more details. 

Now recall that we have a filtration 
\begin{equation*}
\mf{s}\supset\mf{h}\supset\mf{h}^2\supset\cdots\supset\mf{h}^d\supset0. 
\end{equation*}
Fix complementary subspaces $V_i$ for $i=0,\ldots,d$ so that 
\begin{equation*}
\mf{s}=V_0\oplus\mf{h}\quad\text{and}\quad\mf{h}^i=V_i\oplus\mf{h}^{i+1}. 
\end{equation*}
The adjoint representations $\mf{s}\curvearrowright\mf{s}/\mf{h}$ and $\mf{s}\curvearrowright\mf{h}^i/\mf{h}^{i+1}$ are canonically identified with representations\footnote{Here we do not assume $V_i$ are invariant under $\ad$. } of $\mf{s}$ on $V_i$, which we call $\pi_i$. The representation $\pi_0$ is a multiple of the trivial representation. For any element $X\in\mf{s}$, let $X^i$ be the $V_i$--component with respect to the decomposition $\mf{s}=\bigoplus_{i=0}^dV_i$, so that we have $X=X^0+X^1+\cdots+X^d$. In this section we use this upper right symbol $i$ as the projection operator onto $V_i$, or to indicate the element belongs to $V_i$. Accordingly, $\omega$ is decomposed as $\omega=\omega^0+\cdots+\omega^d$. 

By looking at the $V_0$--component of the equation $\df\omega+[\omega,\omega]=0$, we get $\df\omega^0=0$. Our assumption $H^1(\mca{F};V_0)=H^1(\mf{s};V_0)$ implies that there are a linear map $\varphi\colon\mf{s}\to V_0$ vanishing on $[\mf{s},\mf{s}]$ and a smooth map $h\colon M\to V_0$ satisfying 
\begin{equation}
\omega^0=\varphi\omega_0+\df h. \label{important}
\end{equation}
We also write $\varphi_\rho$ for $\varphi$. This $\varphi_\rho$ plays an important role (or causes a trouble) in our problem\footnote{By \eqref{important}, $\varphi_\rho=r\left(\omega^0\right)$ for $r$ defined in the previous section. By this formula, we can define $\varphi_\rho$ without any assumption on the cohomology. But in this case the definition might depend on the choice of an $S$--invariant Borel probability measure $\mu$. }. Unexpectedly, to determine $\varphi_\rho$ is not so easy. If we can show $\varphi_\rho$ has some restricted form, we can weaken the assumption of vanishing of cohomologies to a smaller subset of $\mca{X}$. This is what we will do in Section \ref{matmit} and Section \ref{LSG}. Here let us see some properties of $\varphi_\rho$. 

Let $a\colon M\times S\to S$ be the unique smooth map satisfying 
\begin{equation*}
\rho_0(x,s)=\rho\left(x,a(x,s)\right)\quad\text{and}\quad a(x,1)=1. 
\end{equation*}
This is defined since $\rho_0$ and $\rho$ have the same orbit foliation. The map $a$ is a cocycle over $\rho_0$ and is important in our problem. 

\begin{lem}\label{important eq}
For any $x\in M$ and $s\in S$, 
\begin{equation*}
\int_1^s\varphi_\rho\Theta+h\left(\rho_0(x,s)\right)-h(x)=\int_1^{a(x,s)}\Theta^0. 
\end{equation*}
\end{lem}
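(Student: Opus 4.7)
The plan is to interpret both sides of the asserted identity as the integral of the $V_0$-valued leafwise $1$-form $\omega^0$ along a path in the leaf of $\mathcal{F}$ through $x$ from $x$ to $\rho_0(x,s)$, evaluated in two ways corresponding to the two parametrizations of this path by $\rho_0$ and by $\rho$.

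First I would fix a smooth path $s\colon[0,1]\to S$ with $s(0)=1$ and $s(1)=s$, and set $\sigma(t)=\rho_0(x,s(t))$. By the defining property of $a$, one also has $\sigma(t)=\rho(x,\alpha(t))$, where $\alpha(t):=a(x,s(t))$ is a smooth path in $S$ from $1$ to $a(x,s)$. The key infinitesimal identity, which follows straight from the definition of a canonical $1$-form of an action, is that $\sigma^*\omega_0=s^*\Theta$ and $\sigma^*\omega=\alpha^*\Theta$: indeed, writing $\sigma'(t)=\tfrac{d}{du}\big|_{u=0}\rho_0(x,s(t)\cdot s(t)^{-1}s(t+u))$ and applying $(\omega_0)_{\sigma(t)}$ returns $\tfrac{d}{du}\big|_{u=0}s(t)^{-1}s(t+u)=\Theta_{s(t)}(s'(t))$, and similarly for $\omega$ and $\alpha$.

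Next I would compute the integral $\int_\sigma\omega^0$ in two ways. Using \eqref{important}, namely $\omega^0=\varphi_\rho\,\omega_0+d_{\mathcal{F}}h$, together with $\sigma^*\omega_0=s^*\Theta$,
\begin{equation*}
\int_\sigma\omega^0=\int_0^1\varphi_\rho\bigl(\Theta_{s(t)}(s'(t))\bigr)\,dt+\bigl[h(\sigma(1))-h(\sigma(0))\bigr]=\int_1^s\varphi_\rho\Theta+h\bigl(\rho_0(x,s)\bigr)-h(x),
\end{equation*}
which is the left-hand side. On the other hand, projecting $\sigma^*\omega=\alpha^*\Theta$ onto $V_0$ gives $\sigma^*\omega^0=\alpha^*\Theta^0$, hence
\begin{equation*}
\int_\sigma\omega^0=\int_0^1\Theta^0_{\alpha(t)}(\alpha'(t))\,dt=\int_1^{a(x,s)}\Theta^0,
\end{equation*}
which is the right-hand side. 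Equating yields the claim.

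The one subtlety to verify is that the notations $\int_1^s\varphi_\rho\Theta$ and $\int_1^{a(x,s)}\Theta^0$ are path-independent, so that my choice of $s(t)$ (and the resulting $\alpha(t)$) does not matter. This follows because both integrands are closed $1$-forms on the simply connected group $S$: from the Maurer--Cartan equation $d\Theta=-[\Theta,\Theta]$ one gets $d(\varphi_\rho\Theta)=-\varphi_\rho[\Theta,\Theta]=0$ since $\varphi_\rho$ vanishes on $[\mathfrak{s},\mathfrak{s}]$, and $d\Theta^0=-[\Theta,\Theta]^0=0$ because $[\mathfrak{s},\mathfrak{s}]\subset\mathfrak{h}$ has trivial $V_0$-component. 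The main obstacle is not really an obstacle but a bookkeeping point: one must keep the decomposition $\omega=\omega^0+\cdots+\omega^d$ straight and consistently use the $V_0$-projection throughout, but no deep input beyond the Maurer--Cartan identity for the pulled-back canonical $1$-form is needed.
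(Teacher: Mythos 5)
Your proof is correct and follows essentially the same route as the paper's: both integrate the identity $\omega^0=\varphi_\rho\omega_0+d_{\mathcal{F}}h$ along the leafwise curve from $x$ to $\rho_0(x,s)$ and evaluate the two sides via the two parametrizations by $\rho_0$ and $\rho$, using the pullback identities for the canonical $1$-forms and the closedness of $\varphi_\rho\Theta$ and $\Theta^0$ on $S$. Your explicit check of path-independence is a nice touch that the paper only states in passing.
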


\begin{proof}
Since $\Theta$ satisfies $d\Theta+[\Theta,\Theta]=0$, we see that $\varphi_\rho\Theta$ and $\Theta^0$ are $V_0$--valued closed $1$--forms on $S$. Fix $x\in M$ and $s\in S$. Then by \eqref{important}, 
\begin{equation*}
\int_x^{\rho_0(x,s)}\left(\varphi_\rho\omega_0+\df h\right)=\int_x^{\rho\left(x,a(x,s)\right)}\omega^0. 
\end{equation*}
These integrals are along a curve contained in a leaf. Take a curve $\gamma(t)$, $0\leq t\leq1$, on $S$ connecting $1$ and $s$. Then the left hand side of the above equation is 
\begin{align*}
&\int_0^1\left\{\varphi_\rho\omega_0\left(\frac{d}{dt}\rho_0\left(x,\gamma(t)\right)\right)+\df h\left(\frac{d}{dt}\rho_0\left(x,\gamma(t)\right)\right)\right\}dt\\
&=\int_0^1\varphi_\rho\Theta\left(\frac{d}{dt}\gamma(t)\right)dt+h\left(\rho_0(x,s)\right)-h(x)
\end{align*}
and to compute the right hand side, take a curve $\gamma_1(t)$ connecting $1$ and $a(x,s)$ and then 
\begin{equation*}
\int_0^1\omega^0\left(\frac{d}{dt}\rho\left(x,\gamma_1(t)\right)\right)dt=\int_0^1\Theta^0\left(\frac{d}{dt}\gamma_1(t)\right)dt. 
\end{equation*}
\end{proof}

\begin{lem}\label{so we can}
The map $\varphi\colon\mf{s}\to V_0$ is surjective. 
\end{lem}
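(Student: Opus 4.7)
The plan is to argue by contradiction: assume $\varphi_\rho(\mf{s})$ is a proper subspace of $V_0$ and apply Lemma \ref{important eq}. The idea is to project the equality of the lemma to $V_0/\varphi_\rho(\mf{s})$, obtaining a bounded quantity on one side and an unbounded one on the other.

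First I would identify the two path integrals in Lemma \ref{important eq} with concrete group homomorphisms into $V_0$. Because $[\mf{s},\mf{s}]\subset\mf{h}$, the subalgebra $\mf{h}$ is an ideal, so $H=\exp\mf{h}$ is a closed connected normal subgroup of $S$, and $S/H$ is a simply connected abelian Lie group canonically isomorphic to $V_0$ via the splitting and $\exp$. This yields a surjective group homomorphism $\tilde q\colon S\to V_0$ with $\tilde q(s)=\int_1^s\Theta^0$. Likewise, since $\varphi_\rho$ vanishes on $[\mf{s},\mf{s}]$, the $V_0$-valued form $\varphi_\rho\Theta$ is closed on $S$, and since $S$ is simply connected its primitive $\Phi(s)=\int_1^s\varphi_\rho\Theta$ is a homomorphism $S\to V_0$ with image exactly $\varphi_\rho(\mf{s})$. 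Lemma \ref{important eq} then reads
\[
\Phi(s)+h(\rho_0(x,s))-h(x)=\tilde q(a(x,s)),
\]
and projecting to $V_0/\varphi_\rho(\mf{s})$ kills the $\Phi$-term, so the left-hand side becomes a coboundary of a continuous map $\bar h\colon M\to V_0/\varphi_\rho(\mf{s})$, bounded by $2\|\bar h\|_\infty<\infty$ by compactness of $M$.

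The main obstacle is to show that $a(x,\cdot)\colon S\to S$ is actually a diffeomorphism, so that the right-hand side is unbounded after projection. I would argue this by covering-space reasoning: both $\rho_0(x,\cdot)$ and $\rho(x,\cdot)$ are coverings of the leaf $L_x$ from the simply connected $S$, with discrete deck groups equal to the two isotropy subgroups $\Gamma_x^0$ and $\Gamma_x$; the identity of $L_x$ thus lifts to a diffeomorphism $\Gamma_x^0\backslash S\to\Gamma_x\backslash S$ sending the identity coset to the identity coset, and $a(x,\cdot)$ is its unique lift $S\to S$ fixing $1$, hence a diffeomorphism. Once this is established, $s\mapsto\tilde q(a(x,s))$ is surjective onto $V_0$, and so its image in the nonzero quotient $V_0/\varphi_\rho(\mf{s})$ is unbounded, contradicting the previous bound and forcing $\varphi_\rho(\mf{s})=V_0$.
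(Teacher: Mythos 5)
Your proof is correct and takes essentially the same route as the paper's: both rest on Lemma \ref{important eq}, the surjectivity of $s\mapsto\int_1^s\Theta^0$, the bijectivity of $a(x,\cdot)$ (which the paper simply cites from Asaoka and you reprove via covering spaces), and the boundedness of $h$. Your contradiction via the quotient $V_0/\varphi_\rho(\mf{s})$ is just a rephrasing of the paper's direct observation that a linear subspace lying within bounded distance of every point of $V_0$ must be all of $V_0$.
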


\begin{proof}
Let $q\colon\mf{s}\to\mf{s}/\ker\varphi_\rho$ be the natural projection and let $\bar{\varphi}_\rho\colon\mf{s}/\ker\varphi_\rho\to V_0$ be the induced map from $\varphi_\rho$. The map $S\to V_0$ mapping $s$ to $\int_1^s\Theta^0$ is surjective because $\int_1^{e^X}\Theta^0=X^0$ for every $X\in\mf{s}$. Fix a point $x\in M$. The map $S\to S$ defined by $s\mapsto a(x,s)$ is bijective. See Asaoka \cite[Lemma 1.4.6]{A}. By the above lemma, we have 
\begin{equation*}
\bar{\varphi}_\rho\left(\int_1^sq\Theta\right)=\int_1^{a(x,s)}\Theta^0-h\left(\rho_0(x,s)\right)+h(x). 
\end{equation*}
This and the boundedness of $h$ show $\bar{\varphi}_\rho$ is surjective. 
\end{proof}

Therefore we can regard $\varphi$ as an element of $\mca{X}$. 

\begin{lem}
Set $P=e^h\colon M\to S$. Then 
\begin{equation*}
\Ad(P)\left(\omega-P^*\Theta\right)=\varphi\omega_0+\bar{\omega}^1+\cdots+\bar{\omega}^d
\end{equation*}
for some leafwise $1$--forms $\bar{\omega}^i$ with values in $V_i$. So we have 
\begin{equation}
\omega=\Ad\left(P^{-1}\right)\left(\varphi\omega_0+\bar{\omega}^1+\cdots+\bar{\omega}^d\right)+P^*\Theta.
\end{equation} 
\end{lem}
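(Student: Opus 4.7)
The plan is to verify the formula by computing the two ingredients $P^*\Theta$ and $\Ad(P)$ explicitly and then reading off the $V_0$--component. Everything hinges on the fact that $h$ takes values in the complement $V_0$ of the ideal $\mf{h}$ together with the defining property $[\mf{s},\mf{s}]\subset\mf{h}$.

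First I would apply the standard formula for the pullback of the Maurer--Cartan form under an exponential map,
\[
P^*\Theta=\frac{1-e^{-\ad h}}{\ad h}\,\df h=\df h-\tfrac{1}{2}[h,\df h]+\tfrac{1}{6}[h,[h,\df h]]-\cdots.
\]
Since $h$ and $\df h$ take values in $V_0\subset\mf{s}$, every occurrence of $\ad h$ lands in $[\mf{s},\mf{s}]\subset\mf{h}$, so the $V_0$--component of $P^*\Theta$ is exactly $\df h$ while the remaining part takes values in $\mf{h}=V_1\oplus\cdots\oplus V_d$. Next I would observe that $\mf{h}$ is an ideal of $\mf{s}$ because $[\mf{s},\mf{h}]\subset[\mf{s},\mf{s}]\subset\mf{h}$, so $\Ad(P)=e^{\ad h}$ preserves $\mf{h}$; moreover, since $\ad h$ already sends all of $\mf{s}$ into $\mf{h}$, the map $\Ad(P)$ reduces to the identity modulo $\mf{h}$.

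Combining these two observations, the $V_0$--component of $\Ad(P)(\omega-P^*\Theta)$ equals the $V_0$--component of $\omega-P^*\Theta$, which by the previous step and equation \eqref{important} equals $\omega^0-\df h=\varphi\omega_0$. The remaining $\mf{h}$--valued part of $\Ad(P)(\omega-P^*\Theta)$ then splits canonically according to $\mf{h}=V_1\oplus\cdots\oplus V_d$, giving leafwise $1$--forms $\bar{\omega}^i$ with values in $V_i$; applying $\Ad(P^{-1})$ and adding $P^*\Theta$ produces the second displayed identity. The argument is essentially bookkeeping once the pullback-of-Maurer--Cartan formula is in hand; the only point that needs attention is verifying that the higher bracket terms in both $P^*\Theta$ and $\Ad(P)$ all land in $\mf{h}$, which uses nothing beyond $h\in V_0$ and $[\mf{s},\mf{s}]\subset\mf{h}$. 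I do not foresee any substantial obstacle.
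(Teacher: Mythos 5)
Your proof is correct and follows essentially the same route as the paper: the same expansion of $P^*\Theta$ (you quote the closed form $\frac{1-e^{-\ad h}}{\ad h}\df h$ where the paper derives the equivalent series $\sum_j(-1)^j\frac{(\ad h)^j}{(j+1)!}\df h$), the same identification of the $V_0$--component of $\omega-P^*\Theta$ with $\varphi\omega_0$ via \eqref{important}, and the same observation that $\Ad(P)$ acts as the identity modulo $\mf{h}$. If anything, your justification of the last point (that $\ad h$ maps $\mf{s}$ into $[\mf{s},\mf{s}]\subset\mf{h}$) is spelled out slightly more explicitly than the paper's appeal to $\mf{s}/\mf{h}$ being abelian.
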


\begin{proof}
First we show 
\begin{align*}
P^*\Theta&=\sum_{j=0}^\infty(-1)^j\frac{(\ad h)^j}{(j+1)!}\df h\\
&=\df h-\frac{1}{2}(\ad h)\df h+\cdots. 
\end{align*}
This is because for any point $x\in M$ and $X\in T_x\mca{F}$, we have 
\begin{align*}
\left(P^*\Theta\right)(X)&=\Theta\left.\frac{d}{dt}e^{h(x(t))}\right\vert_{t=0}=\left(L_{e^{-h(x)}}\right)_*\left.\frac{d}{dt}e^{h(x(t))}\right\vert_{t=0}\\
&=\left.\frac{d}{dt}e^{-h(x)}e^{h(x)+h(x(t))-h(x)}\right\vert_{t=0}\\
&=\sum_{j=0}^\infty(-1)^j\frac{(\ad h(x))^j}{(j+1)!}Xh, 
\end{align*}
where $x(t)$ is a curve satisfying $\left.\frac{d}{dt}x(t)\right\vert_{t=0}=X$. 

So $\omega-P^*\Theta=\varphi\omega_0+\bar{\bar{\omega}}^1+\cdots+\bar{\bar{\omega}}^d$ for some $\bar{\bar{\omega}}^i$ taking values in $V_i$. Since $\mf{s}/\mf{h}$ is an abelian Lie algebra, $S\stackrel{\Ad}{\curvearrowright}\mf{s}/\mf{h}$ is trivial. Therefore $\Ad(P)\left(\omega-P^*\Theta\right)=\varphi\omega_0+\bar{\omega}^1+\cdots+\bar{\omega}^d$ for some $\bar{\omega}^i$. 
\end{proof}

By this lemma we can replace $\omega$ by a cohomologous cocycle whose $V_0$--component is {\em constant}. We say an element of $\Omega^*(\mca{F};W)$ for some vector space $W$ is {\em constant} if it lies in the image of $\omega_0^*\colon\Hom\left(\bigwedge^*\mf{s},W\right)\hookrightarrow\Omega^*(\mca{F};W)$. Replacing by cohomologous cocycles, we will gradually make components constant, and finally get a constant cocycle. So now we may assume $\omega=\varphi\omega_0+\omega^1+\cdots+\omega^d$ and proceed to the next step.  

\begin{lem}
Assume that 
\begin{equation*}
\omega=\varphi\omega_0+\varphi^1\omega_0+\cdots+\varphi^{k-1}\omega_0+\omega^k+\cdots+\omega^d
\end{equation*}
for some linear maps $\varphi^i\colon\mf{s}\to V_i$, that is, $\omega$ is already constant up to the $V_{k-1}$--component. Then we can choose some smooth $P\colon M\to S$ so that 
\begin{equation*}
\Ad(P)\left(\omega-P^*\Theta\right)=\varphi\omega_0+\varphi^1\omega_0+\cdots+\varphi^{k-1}\omega_0+\varphi^k\omega_0+\bar{\omega}^{k+1}+\cdots+\bar{\omega}^d
\end{equation*}
for some linear map $\varphi^k\colon\mf{s}\to V_k$ and $\bar{\omega}^i$. 
\end{lem}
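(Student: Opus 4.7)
The plan is to isolate the $V_k$-component of the Maurer--Cartan equation for $\omega$, exhibit $\omega^k$ (up to a constant correction provided by the averaging map of Section~\ref{ppin}) as a cocycle in the leafwise complex twisted by $\pi_k\circ\varphi$, apply the $V_k$-summand of the hypothesis of Theorem~\ref{SC} to write $\omega^k$ as a constant cocycle plus an exact term $Dh$, and finally gauge by $P=e^h$ with $h\colon M\to V_k$.

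First I would extract the $V_k$-component of $\df\omega+\tfrac{1}{2}[\omega,\omega]=0$. Because $[\mf{s},\mf{s}]\subset\mf{h}$, each $\mf{h}^j$ is an ideal of $\mf{s}$, and the standard inclusion $[\mf{h}^i,\mf{h}^j]\subset\mf{h}^{i+j}$ for the descending central series has two consequences: brackets $[\omega^i,\omega^j]$ in which the pair of indices forces the values into $\mf{h}^{k+1}$ contribute nothing to $V_k$, and the only term linear in $\omega^k$ that survives is $[\omega^0,\omega^k]^k=\pi_k(\varphi\omega_0)\wedge\omega^k$. Consequently there is a constant $c\in\Hom\!\left(\bigwedge^2\mf{s},V_k\right)$, depending only on $\varphi,\varphi^1,\ldots,\varphi^{k-1}$, with
\begin{equation*}
D\omega^k=-\omega_0^*c,\qquad D\eta:=\df\eta+(\pi_k\circ\varphi)\omega_0\wedge\eta.
\end{equation*}

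The crucial observation is that $c$ is automatically a Lie algebra coboundary. The averaging map $r$ of Section~\ref{ppin} is a cochain map from $(\Omega^*(\mca{F};V_k),D)$ to the Chevalley--Eilenberg complex for $\pi_k\circ\varphi$, and $r\circ\omega_0^*=\id$. Applying $r$ to $D\omega^k=-\omega_0^*c$ yields $d_{CE}\psi=-c$ for $\psi:=r(\omega^k)\in\Hom(\mf{s},V_k)$. Hence $\omega^k-\omega_0^*\psi$ is $D$-closed, and the hypothesis $H^1(\mca{F};V_k)=H^1(\mf{s};V_k)$---the $V_k$-summand of the cohomology assumption in Theorem~\ref{SC}---produces $\psi'\in\Hom(\mf{s},V_k)$ with $d_{CE}\psi'=0$ and smooth $h\colon M\to V_k$ such that $\omega^k-\omega_0^*\psi=\omega_0^*\psi'+Dh$. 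Setting $\varphi^k:=\psi+\psi'$ gives $\omega^k=\varphi^k\omega_0+Dh$.

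To conclude, let $P=e^h$. Since $h$ takes values in $V_k\subset\mf{h}^k$ and $\mf{h}^k$ is an ideal of $\mf{s}$, we have $(\ad h)(\mf{s})\subset\mf{h}^k$, $(\ad h)(\mf{h}^j)\subset\mf{h}^{k+j}\subset\mf{h}^{k+1}$ for $j\geq1$, and $(\ad h)\df h\in\mf{h}^{2k}\subset\mf{h}^{k+1}$. Using the formula for $P^*\Theta$ already recorded in the previous lemma, together with $\Ad(P)=e^{\ad h}$ and the identity $[h,\varphi\omega_0]^k=-\pi_k(\varphi\omega_0)h$, a direct bookkeeping shows that $\Ad(P)(\omega-P^*\Theta)$ has $V_j$-component equal to the original $\varphi^j\omega_0$ for $j<k$, $V_k$-component
\begin{equation*}
\omega^k-\df h-\pi_k(\varphi\omega_0)h=\omega^k-Dh=\varphi^k\omega_0,
\end{equation*}
and $V_j$-component for $j>k$ equal to some new leafwise form $\bar\omega^j$. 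The main obstacle is the coboundary-ness of $c$, which is furnished essentially for free by the averaging cochain map $r$; the remaining content is filtration bookkeeping, routine once the ideal properties of the $\mf{h}^j$ are in place.
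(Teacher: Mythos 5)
Your proposal is correct and follows essentially the same route as the paper: isolate the $V_k$-component of the Maurer--Cartan equation, use the averaging cochain map $r$ to see the resulting constant $2$-form is a Chevalley--Eilenberg coboundary of $r(\omega^k)$, invoke the cohomology hypothesis for the representation $\pi_k\circ\varphi$ (which applies because $\varphi=\varphi_\rho$ was shown to be surjective, hence lies in $\mca{X}$), and gauge by $P=e^h$ so that the $[h,\varphi\omega_0]$ term of $e^{\ad h}$ cancels $\pi_k(\varphi\omega_0)h$. The only detail worth making explicit is that citation of the surjectivity of $\varphi_\rho$, which the paper records as a separate lemma before this step.
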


\begin{proof}
Looking at the $V_k$--component of the equation $\df\omega+[\omega,\omega]=0$, we obtain 
\begin{align*}
0&=\df\omega^k+\left[\varphi\omega_0+\varphi^1\omega_0+\cdots+\varphi^{k-1}\omega_0+\omega^k,\varphi\omega_0+\varphi^1\omega_0+\cdots+\varphi^{k-1}\omega_0+\omega^k\right]^k\\
&=\df\omega^k+\pi_k\varphi\omega_0\wedge\omega^k+\text{constant form}. 
\end{align*}
The $k$ appearing in $[\cdots,\cdots]^k$ in the first line of the above denotes the projection onto $V_k$. Let $D\colon\Omega^p(\mca{F};V_k)\to\Omega^{p+1}(\mca{F};V_k)$ be the covariant exterior derivative arising from the leafwise connection defined by connection form $\pi_k\varphi\omega_0$. We saw $D=\df+\pi_k\varphi\omega_0\wedge$ in the previous section, so that 
\begin{equation}
D\omega^k=\omega_0^*\psi \label{are}
\end{equation}
for some $\psi\in\Hom\left(\bigwedge^2\mf{s},V_k\right)$ by the above computation. Recall $r$ which is defined in the previous section. We set $\theta=r\left(\omega^k\right)\colon\mf{s}\to V_k$. Then 
\begin{equation}
\psi=r\left(\omega_0^*\psi\right)=r\left(D\omega^k\right)=Dr\left(\omega^k\right)=D\theta. \label{kore}
\end{equation}
Here $D$ also denotes the differential of $\Hom\left(\bigwedge^*\mf{s},V_k\right)$. By \eqref{are} and \eqref{kore}, we get 
\begin{equation*}
D\left(\omega^k-\omega_0^*\theta\right)=0. 
\end{equation*}
By Lemma \ref{so we can} and by our assumption, we have $H^1\left(\mca{F};\mf{s}\stackrel{\pi_k\varphi}{\curvearrowright}V_k\right)=H^1\left(\mf{s};\mf{s}\stackrel{\pi_k\varphi}{\curvearrowright}V_k\right)$. Therefore there exist a linear map $\theta^\prime\colon\mf{s}\to V_k$ and a smooth map $h\colon M\to V_k$ satisfying 
\begin{align*}
\omega^k&=\theta\omega_0+\theta^\prime\omega_0+\df h+\pi_k\varphi\omega_0h\\
&=\varphi^k\omega_0+\df h+\pi_k\varphi\omega_0h. 
\end{align*}
Here we set $\varphi^k=\theta+\theta^\prime$. As before we let $P=e^h$ and then 
\begin{equation*}
\omega-P^*\Theta=\varphi\omega_0+\varphi^1\omega_0+\cdots+\varphi^{k-1}\omega_0+\left(\varphi^k\omega_0+\pi_k\varphi\omega_0h\right)+\bar{\bar{\omega}}^{k+1}+\cdots+\bar{\bar{\omega}}^d
\end{equation*}
 for some $\bar{\bar{\omega}}^i$. Finally we compute as follows: 
\begin{align*}
\Ad(P)\left(\omega-P^*\Theta\right)&=e^{\ad h}\left(\varphi\omega_0+\varphi^1\omega_0+\cdots+\varphi^k\omega_0+\pi_k\varphi\omega_0h\right)+\text{higher terms}\\
&=\varphi\omega_0+\varphi^1\omega_0+\cdots+\varphi^k\omega_0+\pi_k\varphi\omega_0h-\pi_k\varphi\omega_0h+\text{higher terms}\\
&=\varphi\omega_0+\varphi^1\omega_0+\cdots+\varphi^k\omega_0+\text{higher terms}. 
\end{align*}
\end{proof}

Applying this lemma repeatedly, we see that given $\omega$ is cohomologous to a cocycle of the form $\omega^\prime=\varphi\omega_0+\varphi^1\omega_0+\cdots+\varphi^d\omega_0$. Set $\Phi_*=\varphi+\varphi^1+\cdots+\varphi^d\colon\mf{s}\to\mf{s}$. Then $\omega^\prime=\Phi_*\omega_0$ is a constant cocycle, because the equation $\df\omega^\prime+\left[\omega^\prime,\omega^\prime\right]=0$ implies that $\Phi_*$ is an endomorphism of the Lie algebra $\mf{s}$. This completes the proof of Theorem \ref{SC}. 

Let $\mca{X}_{\rho_0}$ be the set of all $\varphi\in\mca{X}$ which can be written as $\varphi_\rho$ for some $\rho\in A(\mca{F},S)$ (using the isomorphism $\mf{s}/\mf{h}\simeq V_0$.) What we actually proved in this section is the following: 

\begin{thm}\label{SC2}
Assume $H^1(\mca{F})=H^1(\mf{s})$, so that we can define $\mca{X}_{\rho_0}$ as a subset of $\mca{X}$. If 
\begin{equation*}
H^1\left(\mca{F};\mf{s}\stackrel{\ad\circ\varphi}{\curvearrowright}\Gr_\mf{h}(\mf{s})\right)=H^1\left(\mf{s};\mf{s}\stackrel{\ad\circ\varphi}{\curvearrowright}\Gr_\mf{h}(\mf{s})\right)
\end{equation*}
for all $\varphi\in\mca{X}_{\rho_0}$, then $\rho_0$ is parameter rigid. 
\end{thm}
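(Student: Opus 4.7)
The plan is to observe that the argument already given for Theorem \ref{SC} in Section \ref{proof of Theorem SC} never actually uses the cohomological hypothesis for \emph{all} $\varphi\in\mca{X}$; at each stage it only invokes it for the specific $\varphi_\rho$ associated to the action $\rho$ we are trying to reduce. So the proof should consist of re-reading the previous proof and book-keeping which $\varphi$'s are actually needed, with one preliminary step to make sense of $\mca{X}_{\rho_0}$ for the given $\rho$.

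First I would fix $\rho\in A(\mca{F},S)$ with canonical $1$--form $\omega=\omega^0+\omega^1+\cdots+\omega^d$ and extract $\varphi_\rho$. The $V_0$--component of $\df\omega+[\omega,\omega]=0$ gives $\df\omega^0=0$, just as before. The crucial point is that $V_0\simeq\mf{s}/\mf{h}$ carries the \emph{trivial} $\mf{s}$--representation (regardless of $\varphi$), so
\begin{equation*}
H^1\!\left(\mca{F};\mf{s}\stackrel{\ad\circ\varphi}{\curvearrowright}V_0\right)=H^1(\mca{F})\otimes V_0,\qquad H^1\!\left(\mf{s};\mf{s}\stackrel{\ad\circ\varphi}{\curvearrowright}V_0\right)=H^1(\mf{s})\otimes V_0.
\end{equation*}
Hence the standing assumption $H^1(\mca{F})=H^1(\mf{s})$ of Theorem \ref{SC2} already produces a linear map $\varphi_\rho\colon\mf{s}\to V_0$ vanishing on $[\mf{s},\mf{s}]$ and a smooth $h\colon M\to V_0$ with $\omega^0=\varphi_\rho\omega_0+\df h$. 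Lemma \ref{so we can} then shows $\varphi_\rho$ is surjective, so $\varphi_\rho\in\mca{X}$, and by construction $\varphi_\rho\in\mca{X}_{\rho_0}$. It is only at this moment that $\mca{X}_{\rho_0}$ enters: we needed the unweighted vanishing just to \emph{define} it.

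Now I would apply the hypothesis of Theorem \ref{SC2} to this particular $\varphi=\varphi_\rho$. Since $\Gr_\mf{h}(\mf{s})=V_0\oplus V_1\oplus\cdots\oplus V_d$ as $\mf{s}$--modules under $\ad\circ\varphi_\rho$, with the $V_k$--part being $\pi_k\varphi_\rho$, the cohomology splits as a direct sum, yielding
\begin{equation*}
H^1\!\left(\mca{F};\mf{s}\stackrel{\pi_k\varphi_\rho}{\curvearrowright}V_k\right)=H^1\!\left(\mf{s};\mf{s}\stackrel{\pi_k\varphi_\rho}{\curvearrowright}V_k\right)\qquad(k=1,\ldots,d).
\end{equation*}
These are exactly the equalities that drive the inductive step in the proof of Theorem \ref{SC}: in the key lemma where a cocycle constant up to the $V_{k-1}$--component is replaced by one constant up to the $V_k$--component, the only cohomological input is vanishing for $\pi_k\varphi_\rho$, with $\varphi_\rho$ being the \emph{same} one produced in the initial step. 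Iterating for $k=1,\ldots,d$ and then identifying the resulting constant cocycle with $\Phi_*\omega_0$ for a Lie algebra endomorphism $\Phi_*$ (as at the end of Section \ref{proof of Theorem SC}) completes the parameter equivalence.

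There is essentially no new obstacle: the work is bookkeeping, making sure the $\varphi$ used at every subsequent step is literally the $\varphi_\rho$ produced from the $V_0$--component and not some other element of $\mca{X}$. The only conceptual content is the observation that the hypothesis $H^1(\mca{F})=H^1(\mf{s})$ is precisely the $V_0$--case (which is automatic for trivial coefficients) and is needed a priori just so that $\varphi_\rho$, and hence the set $\mca{X}_{\rho_0}$, is well-defined.
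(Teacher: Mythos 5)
Your proposal is correct and matches the paper's own reasoning: the paper introduces Theorem \ref{SC2} precisely with the remark ``what we actually proved in this section is the following,'' i.e.\ the proof of Theorem \ref{SC} only ever invokes the cohomological hypothesis for the single element $\varphi_\rho\in\mca{X}_{\rho_0}$ extracted from the $V_0$--component, with the $V_0$--case itself being the trivial-coefficient statement $H^1(\mca{F})=H^1(\mf{s})$. Your bookkeeping of which $\varphi$ is used at each stage is exactly the intended argument.
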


The next task is trying to prove the set $\mca{X}_{\rho_0}$ is small. 

\begin{rem}
Although we deal with only $\mf{s}$--valued cocycles arising from actions $\rho\in A(\mca{F},S)$, what we do in this section is actually valid for any $\mf{k}$--valued cocycles over $\rho_0$ for any connected simply connected solvable Lie group $K$ and a subspace $\mf{h}$ between $[\mf{k},\mf{k}]$ and the nilradical of $\mf{k}$. Also the acting group $S$ need not be solvable, we need only assume that the action has an invariant Borel probability measure. Solvability is used only for the value group $K$. This might be useful for purposes other than parameter rigidity. 
\end{rem}

\begin{rem}
For actions of semisimple Lie groups or groups with property (T), $\bb{R}$--valued cocycle rigidity implies $K$--valued cocycle rigidity for any connected simply connected solvable Lie groups. This is shown by an obvious argument and valid for actions in broader categories.  
\end{rem}

\begin{rem}
As explained in Asaoka \cite[Section 1.4.4]{A}, $H^1\left(\mca{F};\mf{s}\stackrel{\ad}{\curvearrowright}\mf{s}\right)\left/H^1\left(\mf{s};\mf{s}\stackrel{\ad}{\curvearrowright}\mf{s}\right)\right.$ can be viewed as the formal tangent space at $\rho_0$ in $A(\mca{F},S)/\text{(parameter equivalence)}$. We can show that $H^1\left(\mca{F};\mf{s}\stackrel{\ad}{\curvearrowright}\Gr_\mf{h}(\mf{s})\right)=H^1\left(\mf{s};\mf{s}\stackrel{\ad}{\curvearrowright}\Gr_\mf{h}(\mf{s})\right)$ implies 
\begin{equation*}
H^1\left(\mca{F};\mf{s}\stackrel{\ad}{\curvearrowright}\mf{s}\right)\left/H^1\left(\mf{s};\mf{s}\stackrel{\ad}{\curvearrowright}\mf{s}\right)\right.=0
\end{equation*}
by an argument using spectral sequences. But the converse seems false. 
\end{rem}

\section{Sufficient condition by the method of Matsumoto and Mitsumatsu}\label{matmit}
We prove Theorem \ref{volume} here. Let $M\stackrel{\rho_0}{\curvearrowleft}S$ be an action which we consider and take any $\rho\in A(\mca{F},S)$. According to Theorem \ref{SC2}, what we need to show is $\varphi_\rho$, defined in Section \ref{proof of Theorem SC}, coincides with the natural projection $\mf{s}\twoheadrightarrow\mf{s}/\mf{n}$. In this section we also use the notation $\rho_0^s(x)=\rho_0(x,s)$ and $\rho^s(x)=\rho(x,s)$. As in Section \ref{proof of Theorem SC} we have an $S$--valued cocycle $a\colon M\times S\to S$ over $\rho_0$ satisfying $\rho_0^s(x)=\rho^{a(x,s)}(x)$ for all $x\in M$ and $s\in S$. For any $X\in\mf{s}$, $s\in S$ and $x\in M$, 
\begin{align*}
\left(\rho_0^s\right)_*X_x&=\left.\frac{d}{dt}\rho_0\left(x,e^{tX}s\right)\right\vert_{t=0}\\
&=\left.\frac{d}{dt}\rho_0\left(\rho_0^s(x),e^{t\Ad\left(s^{-1}\right)X}\right)\right\vert_{t=0}\\
&=\left(\Ad\left(s^{-1}\right)X\right)_{\rho_0^s(x)}. 
\end{align*}
Take a basis of $\mf{s}$ and its dual basis of $\mf{s}^*$. The dual basis of $\mf{s}^*$, after being pulled back by the canonical $1$--form $\omega_0$ of $\rho_0$, is regarded as a global frame of the bundle $T^*\mca{F}$. Let $\Omega_0\in\Omega^{\dim S}(\mca{F})$ be the wedge product of the global frame, which is a {\em leafwise volume form}. By the above computation, we see 
\begin{equation*}
\left(\rho_0^s\right)^*\Omega_0=\det\Ad\left(s^{-1}\right)\Omega_0
\end{equation*}
for all $s\in S$. We do the same thing for $\rho$, getting another leafwise volume form $\Omega\in\Omega^{\dim S}(\mca{F})$ which satisfies 
\begin{equation*}
\left(\rho^{s}\right)^*\Omega=\det\Ad\left(s^{-1}\right)\Omega
\end{equation*}
for all $s\in S$. Here we must use the canonical $1$--form $\omega$ of $\rho$ rather than $\omega_0$. Fix a complementary subbundle $E$ to $T\mca{F}$ in $TM$; $TM=T\mca{F}\oplus E$. Since $M$ is orientable, we can choose a nowhere vanishing smooth section $\Omega_\text{tr}$ of $\bigwedge^{\dim E}\left(TM/T\mca{F}\right)^*$. We have natural projections $T\mca{F}\leftarrow TM\to TM/T\mca{F}$ defined by $E$. Let $\ol{\Omega_0}$, $\ol{\Omega}\in\Omega^{\dim S}(M)$ and $\ol{\Omega_\text{tr}}\in\Omega^{\dim E}(M)$ be the pull backs of $\Omega_0$, $\Omega$ and $\Omega_\text{tr}$ by the projections. In this section, bars written over something stand for pulling back something by the projections. Both $\ol{\Omega_0}\wedge\ol{\Omega_\text{tr}}$, $\ol{\Omega}\wedge\ol{\Omega_\text{tr}}\in\Omega^{\dim M}(M)$ are volume forms of $M$. So there is a smooth map $c\colon M\times S\to\bb{R}_{>0}$ satisfying 
\begin{equation*}
(\rho^s)^*\left(\ol{\Omega}\wedge\ol{\Omega_\text{tr}}\right)=c(\cdotp,s)\left(\ol{\Omega}\wedge\ol{\Omega_\text{tr}}\right)
\end{equation*}
for all $s\in S$. It is easy to see that $c$ is a cocycle over $\rho$, that is, 
\begin{equation*}
c(x,ss^\prime)=c(x,s)c\left(\rho(x,s),s^\prime\right)
\end{equation*}
for all $x\in M$ and $s,s^\prime\in S$. Our assumption $H^1(\mca{F})=H^1(\mf{s})$ is equivalent to $\bb{R}$--valued cocycle rigidity of $\rho$. See Maruhashi \cite[Section 2]{Ma} for instance. Thus we can find a homomorphism $\alpha\colon S\to\bb{R}_{>0}$ and a smooth map $P\colon M\to\bb{R}_{>0}$ so that 
\begin{equation*}
c(x,s)=P(x)^{-1}\alpha(s)P\left(\rho(x,s)\right)
\end{equation*}
holds for all $x\in M$ and $s\in S$. Then for any $s\in S$, 
\begin{align*}
(\rho^s)^*\left(P^{-1}\left(\ol{\Omega}\wedge\ol{\Omega_\text{tr}}\right)\right)&=P\left(\rho^s(\cdotp)\right)^{-1}c(\cdotp,s)\left(\ol{\Omega}\wedge\ol{\Omega_\text{tr}}\right)\\
&=\alpha(s)P^{-1}\left(\ol{\Omega}\wedge\ol{\Omega_\text{tr}}\right). 
\end{align*}
By integrating over $M$, we get $\alpha(s)=1$ for all $s\in S$. By replacing $P^{-1}\Omega_\text{tr}$ by $\Omega_\text{tr}$ we may assume that 
\begin{equation}\label{invariant}
(\rho^s)^*\left(\ol{\Omega}\wedge\ol{\Omega_\text{tr}}\right)=\ol{\Omega}\wedge\ol{\Omega_\text{tr}}
\end{equation}
for all $s\in S$. For any $x\in M$ and $s\in S$, two maps 
\begin{equation*}
\left(\rho_0^s\right)_*, \left(\rho^{a(x,s)}\right)_*\colon\left(TM/T\mca{F}\right)_x\to\left(TM/T\mca{F}\right)_{\rho_0^s(x)=\rho^{a(x,s)}(x)}
\end{equation*}
coincide. This is because $\rho_0$ and $\rho$ have the same orbit foliation and it is easy to see this for small $s\in S$. For two small $s$, $s^\prime\in S$, we have 
\begin{equation*}
\begin{tikzcd}[column sep=huge]
\left(TM/T\mca{F}\right)_x\ar[r,"\left(\rho_0^s\right)_*"]\ar[d,equal]\ar[dr,phantom,"\circlearrowright" marking]&\left(TM/T\mca{F}\right)_{\rho_0^s(x)}\ar[r,"\left(\rho_0^{s^\prime}\right)_*"]\ar[d,equal]\ar[dr,phantom,"\circlearrowright" marking]&\left(TM/T\mca{F}\right)_{\rho_0^{ss^\prime}(x)}\ar[d,equal]\\
\left(TM/T\mca{F}\right)_x\ar[r,"\left(\rho^{a(x,s)}\right)_*"']&\left(TM/T\mca{F}\right)_{\rho_0^s(x)}\ar[r,"\left(\rho^{a\left(\rho_0^s(x),s^\prime\right)}\right)_*"']&\left(TM/T\mca{F}\right)_{\rho_0^{ss^\prime}(x)}, \\
\end{tikzcd}
\end{equation*}
so that 
\begin{align*}
\left(\rho_0^{ss^\prime}\right)_*&=\left(\rho_0^{s^\prime}\right)_*\circ\left(\rho_0^s\right)_*=\left(\rho^{a\left(\rho_0^s(x),s^\prime\right)}\right)_*\circ\left(\rho^{a(x,s)}\right)_*\\
&=\left(\rho^{a(x,s)a\left(\rho_0^s(x),s^\prime\right)}\right)_*=\left(\rho^{a\left(x,ss^\prime\right)}\right)_*. 
\end{align*}
Using this we can prove for general $s\in S$. Therefore 
\begin{equation}\label{bimo}
\left(\left(\rho_0^s\right)^*\Omega_\text{tr}\right)_x=\left(\left(\rho^{a(x,s)}\right)^*\Omega_\text{tr}\right)_x
\end{equation}
for all $x\in M$ and $s\in S$. Let $\beta\colon M\times S\to\bb{R}_{>0}$ be the smooth map satisfying $(\rho^s)^*\Omega_\text{tr}=\beta(\cdotp,s)\Omega_\text{tr}$ for all $s\in S$. 

\begin{lem}
We have $\beta(\cdotp,s)=\det\Ad(s)$ for all $s\in S$. Therefore
\begin{equation}\label{ke}
(\rho^s)^*\Omega_\text{tr}=\det\Ad(s)\Omega_\text{tr}. 
\end{equation}
\end{lem}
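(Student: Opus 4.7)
The plan is to use the $\rho$-invariance of $\ol{\Omega}\wedge\ol{\Omega_\text{tr}}$ recorded in \eqref{invariant}, together with the already-established leafwise transformation $(\rho^s)^*\Omega=\det\Ad(s^{-1})\Omega$, and simply read off $\beta(\cdot,s)$ from a Jacobian computation. The upshot is that for a leaf-preserving diffeomorphism the Jacobian with respect to a leafwise-times-transverse volume form factors cleanly as (leafwise Jacobian)$\,\times\,$(transverse Jacobian).

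Fix $x\in M$ and $s\in S$, and choose bases of $T_xM$ and $T_{\rho^s(x)}M$ adapted to the splitting $TM=T\mca{F}\oplus E$. Since $\rho^s$ preserves $\mca{F}$, the pushforward $(\rho^s)_*$ has block upper-triangular matrix $\begin{pmatrix}A&B\\0&C\end{pmatrix}$ in these bases, where $A$ is the restriction of $(\rho^s)_*$ to $T\mca{F}$ and $C$ represents the induced map on $TM/T\mca{F}\simeq E$. Any top-degree form $\mu$ on $M$ satisfies $(\rho^s)^*\mu|_x=\det(\rho^s)_*\cdot\mu|_x$ in such bases.

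Applying this with $\mu=\ol{\Omega}\wedge\ol{\Omega_\text{tr}}$, the Jacobian factors as $\det A\cdot\det C$: in the multilinear expansion of $(\rho^s)^*(\ol{\Omega}\wedge\ol{\Omega_\text{tr}})$ on an adapted basis, only the ``diagonal'' term survives, because $\ol{\Omega}$ vanishes on $E$-directions and $\ol{\Omega_\text{tr}}$ vanishes on $T\mca{F}$-directions, so any contribution involving the off-diagonal block $B$ would feed a $T\mca{F}$-valued vector into $\ol{\Omega_\text{tr}}$ and kill the term. The identity $(\rho^s)^*\Omega=\det\Ad(s^{-1})\Omega$ gives $\det A=\det\Ad(s^{-1})$, while $\det C=\beta(x,s)$ by the definition of $\beta$. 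The invariance \eqref{invariant} then forces $\det\Ad(s^{-1})\cdot\beta(x,s)=1$, i.e.\ $\beta(x,s)=\det\Ad(s)$, which gives \eqref{ke}.

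The only step requiring real care is the block-wedge factorization of the Jacobian above, which is a short multilinear-algebra check exploiting the vanishing properties of $\ol{\Omega}$ on $E$ and of $\ol{\Omega_\text{tr}}$ on $T\mca{F}$. It is worth noting that the preparatory identity \eqref{bimo} does not seem to be needed for this particular lemma; it is presumably put in place for use further on in the proof of Theorem~\ref{volume}, where one needs to compare transverse Jacobians for $\rho_0$ and $\rho$.
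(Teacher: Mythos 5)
Your proof is correct and is essentially the paper's own argument in matrix language: the paper likewise starts from the invariance \eqref{invariant} and shows that the mixed terms in $(\rho^s)^*\ol{\Omega}\wedge(\rho^s)^*\ol{\Omega_\text{tr}}$ die because $\ol{\Omega_\text{tr}}$ annihilates $T\mca{F}$-vectors, which is exactly your block-triangular Jacobian factorization $\det A\cdot\det C=\det\Ad\left(s^{-1}\right)\beta(\cdotp,s)=1$. Your remark that \eqref{bimo} is not needed for the lemma itself, but only afterwards, is also accurate.
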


\begin{proof}
By \eqref{invariant}, 
\begin{equation*}
\ol{\Omega}\wedge\ol{\Omega_\text{tr}}=(\rho^s)^*\left(\ol{\Omega}\wedge\ol{\Omega_\text{tr}}\right)=(\rho^s)^*\ol{\Omega}\wedge(\rho^s)^*\ol{\Omega_\text{tr}}. 
\end{equation*}
Since 
\begin{equation*}
\begin{tikzcd}
T_xM\ar[r,"(\rho^s)_*"]\ar[d]\ar[dr,phantom,"\circlearrowright" marking]&T_{\rho^s(x)}M\ar[d]\\
\left(TM/T\mca{F}\right)_x\ar[r,"(\rho^s)_*"']&\left(TM/T\mca{F}\right)_{\rho^s(x)}, 
\end{tikzcd}
\end{equation*}
we have $(\rho^s)^*\ol{\Omega_\text{tr}}=\ol{(\rho^s)^*\Omega_\text{tr}}$. On the other hand, the diagram 
\begin{equation*}
\begin{tikzcd}
T_xM\ar[r,"(\rho^s)_*"]\ar[d]&T_{\rho^s(x)}M\ar[d]\\
T_x\mca{F}\ar[r,"(\rho^s)_*"']&T_{\rho^s(x)}\mca{F}
\end{tikzcd}
\end{equation*}
does not commute unless $E$ is $\rho$--invariant, so usually $(\rho^s)^*\ol{\Omega}\ne\ol{(\rho^s)^*\Omega}$. But 
\begin{equation*}
(\rho^s)^*\ol{\Omega}\wedge(\rho^s)^*\ol{\Omega_\text{tr}}=\ol{(\rho^s)^*\Omega}\wedge\ol{(\rho^s)^*\Omega_\text{tr}}
\end{equation*}
holds, because $(\rho^s)^*\ol{\Omega}$ and $\ol{(\rho^s)^*\Omega}$ coincide when only vectors in $T\mca{F}$ are substituted, and $\ol{(\rho^s)^*\Omega_\text{tr}}$ vanishes if we substitute a vector in $T\mca{F}$. Therefore 
\begin{align*}
\ol{\Omega}\wedge\ol{\Omega_\text{tr}}&=\ol{(\rho^s)^*\Omega}\wedge\ol{(\rho^s)^*\Omega_\text{tr}}=\ol{\det\Ad\left(s^{-1}\right)\Omega}\wedge\ol{\beta(\cdotp,s)\Omega_\text{tr}}\\
&=\det\Ad\left(s^{-1}\right)\beta(\cdotp,s)\ol{\Omega}\wedge\ol{\Omega_\text{tr}}
\end{align*}
and the claim follows. 
\end{proof}

Combining \eqref{bimo} and \eqref{ke}, we get 
\begin{equation*}
\left(\rho_0^s\right)^*\Omega_\text{tr}=\det\Ad\left(a(\cdotp,s)\right)\Omega_\text{tr}. 
\end{equation*}
Then by the same argument in the proof of the above lemma, 
\begin{align}
\left(\rho_0^s\right)^*\left(\ol{\Omega_0}\wedge\ol{\Omega_\text{tr}}\right)&=\left(\rho_0^s\right)^*\ol{\Omega_0}\wedge\left(\rho_0^s\right)^*\ol{\Omega_\text{tr}}\nonumber\\
&=\ol{\left(\rho_0^s\right)^*\Omega_0}\wedge\ol{\left(\rho_0^s\right)^*\Omega_\text{tr}}\nonumber\\
&=\ol{\det\Ad\left(s^{-1}\right)\Omega_0}\wedge\ol{\det\Ad\left(a(x,s)\right)\Omega_\text{tr}}\nonumber\\
&=\det\Ad\left(s^{-1}\right)\det\Ad\left(a(x,s)\right)\ol{\Omega_0}\wedge\ol{\Omega_\text{tr}}. \label{formula}
\end{align}
This formula will be the key to prove Theorem \ref{volume}. 

\begin{lem}
For any $s\in S$, 
\begin{equation*}
\det\Ad(s)=e^{\tr\ad\int_1^sp\Theta}, 
\end{equation*}
where $p\colon\mf{s}\twoheadrightarrow\mf{s}/\mf{n}$ denotes the natural projection and $\ad$ is also used for $\mf{s}/\mf{n}\stackrel{\ad}{\curvearrowright}\Gr_\mf{n}(\mf{s})$.  
\end{lem}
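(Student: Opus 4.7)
The plan is to check that both sides of the claimed identity are continuous homomorphisms $S\to\bb{R}_{>0}$, and then verify agreement on elements of the form $s=\exp X$. The left hand side is manifestly a homomorphism. For the right hand side, first note that $p\Theta$ is a closed $\mf{s}/\mf{n}$--valued $1$--form on $S$: applying $p$ to the Maurer--Cartan equation $d\Theta+[\Theta,\Theta]=0$ and using that $\mf{s}/\mf{n}$ is abelian yields $d(p\Theta)=0$. Since $S$ is simply connected, $\int_1^s p\Theta$ is well defined independently of the chosen path. Left invariance of $\Theta$ gives $\int_s^{ss^\prime}p\Theta=\int_1^{s^\prime}p\Theta$, hence $s\mapsto\int_1^s p\Theta$ is a homomorphism $S\to(\mf{s}/\mf{n},+)$, and composing with $X\mapsto e^{\tr\ad X}$ produces a homomorphism $S\to\bb{R}_{>0}$.

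Since $S$ is connected, it is generated as a group by $\exp\mf{s}$, so it suffices to verify the identity for $s=\exp X$ with $X\in\mf{s}$. The standard formula gives $\det\Ad(\exp X)=e^{\tr\ad_\mf{s} X}$. Integrating $p\Theta$ along the curve $t\mapsto\exp(tX)$ and using $\Theta_{\exp(tX)}\left(\frac{d}{dt}\exp(tX)\right)=X$ yields $\int_1^{\exp X}p\Theta=pX$. The identity therefore reduces to the Lie algebraic claim
\begin{equation*}
\tr\ad_\mf{s}X=\tr\ad_{\Gr_\mf{n}(\mf{s})}(pX).
\end{equation*}

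To prove this last equality, observe that the filtration $\mf{s}\supset\mf{n}\supset\mf{n}^2\supset\cdots$ is $\ad X$--invariant, so $\tr\ad_\mf{s} X$ equals the trace of the induced action on $\Gr_\mf{n}(\mf{s})=\mf{s}/\mf{n}\oplus\bigoplus_i\mf{n}^i/\mf{n}^{i+1}$. On the summand $\mf{s}/\mf{n}$ the induced action is zero, because $\ad X$ sends $\mf{s}$ into $[\mf{s},\mf{s}]\subset\mf{n}$. On each $\mf{n}^i/\mf{n}^{i+1}$ the action of $\mf{n}$ vanishes (since $[\mf{n},\mf{n}^i]\subset\mf{n}^{i+1}$), so the induced action of $X$ depends only on $pX$ and coincides with $\ad(pX)$ on that graded piece. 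Summing over all pieces gives the identity. No serious obstacle arises; the argument is a routine combination of the formula $\det\Ad\circ\exp=\exp\circ\tr\ad$ with additivity of trace across an invariant filtration.
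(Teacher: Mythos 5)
Your proof is correct and takes essentially the same route as the paper: both reduce to elements of the form $e^{X}$ via multiplicativity of the two sides (the paper writes $s=e^{X_1}\cdots e^{X_k}$ and computes $\int_1^s p\Theta=p(X_1)+\cdots+p(X_k)$), and both rest on the identity $\tr\ad_{\mf{s}}X=\tr\ad_{\Gr_\mf{n}(\mf{s})}(pX)$. The only difference is that the paper asserts this trace identity without proof, whereas you verify it by passing to the associated graded of the invariant filtration $\mf{s}\supset\mf{n}\supset\mf{n}^2\supset\cdots$, which is a welcome addition.
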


\begin{proof}
Any $s\in S$ can be written as $s=e^{X_1}\cdots e^{X_k}$ for some $X_1,\ldots,X_k\in\mf{s}$. It is easy to see 
\begin{equation*}
\int_1^sp\Theta=p(X_1)+\cdots+p(X_k). 
\end{equation*}
Since $\tr\ad p(X)=\tr\ad X$ for $X\in\mf{s}$, we have 
\begin{align*}
\det\Ad(s)&=e^{\tr\ad X_1}\cdots e^{\tr\ad X_k}\\
&=e^{\tr\ad\left(p(X_1)+\cdots+p(X_k)\right)}\\
&=e^{\tr\ad\int_1^sp\Theta}. 
\end{align*}
\end{proof}

Since we have $\int_1^s\varphi_\rho\Theta+h\left(\rho_0(x,s)\right)-h(x)=\int_1^{a(x,s)}p\Theta$ as in Lemma \ref{important eq}, the factor appearing in the formula \eqref{formula} will be 
\begin{align*}
\det\Ad\left(s^{-1}\right)\det\Ad\left(a(x,s)\right)&=e^{-\tr\ad\int_1^sp\Theta}e^{\tr\ad\int_1^{a(x,s)}p\Theta}\\
&=\exp\left\{\tr\ad\int_1^s\left(\varphi_\rho-p\right)\Theta+\tr\ad\left(h\left(\rho_0(x,s)\right)-h(x)\right)\right\}. 
\end{align*}
Assume $\varphi_\rho\ne p$ and we will get a contradiction as follows. We have $X\in\mf{s}$ so that $A:=(\varphi_\rho-p)(X)\ne0$. Since $\dim\mf{s}/\mf{n}=1$ and $\mf{s}$ is not unimodular, $\tr\ad A\ne0$. We may assume $\tr\ad A>0$ by replacing $X$. Then for $s=e^{TX}$ with $T>0$ large, the function 
\begin{equation*}
\det\Ad\left(s^{-1}\right)\det\Ad\left(a(x,s)\right)=e^{T\tr\ad A}e^{\tr\ad\left(h\left(\rho_0(x,s)\right)-h(x)\right)}
\end{equation*}
on $M$ must be large because $h$ is bounded. This contradicts the formula \eqref{formula} by integrating over $M$. Therefore $\varphi_\rho=p$ and this completes the proof of Theorem \ref{volume}. 

\begin{rem}
The assumptions $\dim\mf{s}/\mf{n}=1$ and nonunimodularity of $\mf{s}$ are used only in the final part of the proof, so we have proved the following. If $M$ is orientable, $\tr\ad\left(\varphi_\rho-p\right)(X)=0$ for all $X\in\mf{s}$, where $\ad$ denotes $\mf{s}/\mf{n}\stackrel{\ad}{\curvearrowright}\Gr_\mf{n}(\mf{s})$. 
\end{rem}

\section{Sufficient condition given by large scale geometry of solvable Lie groups}\label{LSG}

\subsection{Key Proposition to the method}
Let $X$, $B$ be metric spaces. We say a surjective map $p\colon X\to B$ is a {\em distance respecting projection} if for any two points $b$, $b^\prime\in B$, 
\begin{equation*}
d(b,b^\prime)=d\left(p^{-1}(b),p^{-1}(b^\prime)\right)=d_\mca{H}\left(p^{-1}(b),p^{-1}(b^\prime)\right)
\end{equation*}
holds. Here $d_\mca{H}$ denotes the Hausdorff distance. Let $p\colon X\to B$ and $p^\prime\colon X^\prime\to B^\prime$ be distance respecting projections. For a given diagram 
\begin{equation*}
\begin{tikzcd}
X\ar[r,"f"]\ar[d,"p"']&X^\prime\ar[d,"p^\prime"]\\
B\ar[r,"\varphi"']&B^\prime
\end{tikzcd}
\end{equation*}
we say $f$ is {\em fiber respecting over $\varphi$} if there is $C>0$ such that 
\begin{equation*}
d_\mca{H}\left(f\left(p^{-1}(b)\right),\left(p^\prime\right)^{-1}\left(\varphi(b)\right)\right)<C
\end{equation*}
for all $b\in B$. 

\begin{lem}
Let $G$ be a Lie group and $H$ be a closed subgroup of $G$. 

\noindent
(1) Left invariant Riemannian metrics on $G/H$ are in one-to-one correspondence with inner products on $\mf{g}/\mf{h}$ invariant under $H\stackrel{\Ad}{\curvearrowright}\mf{g}/\mf{h}$ by the canonical identification $\mf{g}/\mf{h}\simeq T_HG/H$. 

\noindent
(2) Assume that there exists an invariant inner product for $H\stackrel{\Ad}{\curvearrowright}\mf{g}/\mf{h}$. Take an inner product of $\mf{g}$ for which the restriction $\mf{h}^\perp\stackrel{\sim}{\to}\mf{g}/\mf{h}$ of the projection $\mf{g}\to\mf{g}/\mf{h}$ is an isometry. Endow $G$ and $G/H$ with left invariant Riemannian metrics defined by these inner products. Let $\pi\colon G\to G/H$ be the projection. Then, for every $g\in G$, the restriction $(L_g)_*\mf{h}^\perp\stackrel{\sim}{\to}T_{gH}G/H$ of $\pi_*\colon T_gG\to T_{gH}G/H$ is an isometry, and the kernel of $\pi_*\colon T_gG\to T_{gH}G/H$ is $(L_g)_*\mf{h}$. 

\noindent
(3) Assume that $G$ is connected. Under the assumption of 2), the map $\pi\colon G\to G/H$ is a distance respecting projection. 
\end{lem}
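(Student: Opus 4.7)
For part (1), the correspondence comes from evaluation at the base coset $eH$: a left invariant metric on $G/H$ restricts to an inner product on $T_{eH}(G/H) \simeq \mf{g}/\mf{h}$ that must be invariant under the stabilizer of $eH$ acting via derivatives of left translations, and this stabilizer action is exactly $\Ad$ of $H$ on $\mf{g}/\mf{h}$. Conversely, an $\Ad(H)$-invariant inner product on $\mf{g}/\mf{h}$ transports by left translations to a left invariant metric on $G/H$, with $\Ad(H)$-invariance being precisely what is needed for the value at $gH$ to be independent of the coset representative $g$.

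For part (2), the key observation is the equivariance $\pi \circ L_g = L_g \circ \pi$ (the right-hand $L_g$ denoting left multiplication on $G/H$). Taking derivatives, $\pi_*|_g = (L_g)_* \circ \pi_*|_e \circ (L_{g^{-1}})_*$, with the two outer translations being isometries by construction of the left invariant metrics. Both claims --- the isometry on the horizontal subspace and the identification of the kernel --- therefore reduce to the identity case, where they are tautological from the orthogonal decomposition $\mf{g} = \mf{h} \oplus \mf{h}^\perp$ and the fact that $\mf{h}^\perp \to \mf{g}/\mf{h}$ is an isometry.

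For part (3), I plan to establish the chain
\begin{equation*}
d(b,b') \le d\bigl(\pi^{-1}(b), \pi^{-1}(b')\bigr) \le d_{\mca{H}}\bigl(\pi^{-1}(b), \pi^{-1}(b')\bigr) \le d(b,b'),
\end{equation*}
so that all three quantities coincide. The middle inequality is automatic. The first follows from the $1$-Lipschitz property of $\pi$: by part (2), for any $v \in T_g G$ decomposed as $v_H + v_V$ with $v_H \in (L_g)_* \mf{h}^\perp$ and $v_V \in (L_g)_* \mf{h}$, we have $\|\pi_* v\| = \|v_H\| \le \|v\|$, so $\pi$ does not lengthen curves. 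For the last inequality I use horizontal lifting: given $\varepsilon > 0$ and a curve $\gamma \colon [0,1] \to G/H$ from $b$ to $b'$ of length less than $d(b,b') + \varepsilon$, and any $x \in \pi^{-1}(b)$, the horizontal lift $\tilde\gamma$ starting at $x$ is characterized by $\pi \circ \tilde\gamma = \gamma$ and $\tilde\gamma'(t) \in (L_{\tilde\gamma(t)})_* \mf{h}^\perp$; by part (2) again, $\|\tilde\gamma'(t)\| = \|\gamma'(t)\|$, so $d(x, \tilde\gamma(1)) \le d(b,b') + \varepsilon$ with $\tilde\gamma(1) \in \pi^{-1}(b')$, and a symmetric argument handles the other side of the Hausdorff distance.

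The main obstacle is global existence of the horizontal lift. Local existence is routine ODE, and extension to the entire interval $[0,1]$ relies on metric completeness of $G$. A left invariant Riemannian metric on a connected Lie group is complete: by compactness of the unit sphere in $\mf{g}$, geodesics emanating from $e$ exist for a uniform positive time, and left invariance propagates this to every starting point, yielding geodesic and hence metric completeness via Hopf--Rinow. If the maximal interval of definition of $\tilde\gamma$ were $[0,T)$ with $T \le 1$, then $\tilde\gamma|_{[s,t]}$ would have the same length as $\gamma|_{[s,t]}$, which tends to zero as $s,t \to T$; thus $\tilde\gamma(t)$ would be Cauchy and converge in $G$, and local existence at the limit would extend $\tilde\gamma$ past $T$, contradicting maximality.
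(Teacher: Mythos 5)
Your proof is correct and follows essentially the same route as the paper: part (2) reduces to the identity via left-translation equivariance, and part (3) rests on the $1$-Lipschitz property of $\pi$ coming from the orthogonal splitting together with length-preserving horizontal lifts, exactly as in the paper's three claims. The only differences are organizational (your single chain of inequalities with $\varepsilon$-almost-minimizing curves versus the paper's minimal geodesics) and that you supply the completeness and global-lift details the paper dismisses as ``a standard argument.''
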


\begin{proof}
It is easy to check (1) and (2). 

\noindent
(3)\ \ Take arbitrary two points $b$, $b^\prime\in G/H$. 

\begin{claim}
$d\left(\pi^{-1}(b),\pi^{-1}\left(b^\prime\right)\right)\geq d\left(b,b^\prime\right)$. 
\end{claim}

For any $g\in\pi^{-1}(b)$ and $g^\prime\in\pi^{-1}\left(b^\prime\right)$, take a minimal geodesic $\gamma\colon[0,1]\to G$ connecting $g$ and $g^\prime$. Since $\left\|(\pi\circ\gamma)^\prime(t)\right\|=\left\|\pi_*\gamma^\prime(t)\right\|\leq\left\|\gamma^\prime(t)\right\|$ by the property of our metric, we have 
\begin{equation*}
d\left(g,g^\prime\right)=\int_0^1\left\|\gamma^\prime(t)\right\|dt\geq\int_0^1\left\|(\pi\circ\gamma)^\prime(t)\right\|dt\geq d\left(b,b^\prime\right). 
\end{equation*}

\begin{claim}
$d\left(\pi^{-1}(b),\pi^{-1}\left(b^\prime\right)\right)\leq d\left(b,b^\prime\right)$. 
\end{claim}

Take a minimal geodesic $\gamma\colon[0,1]\to G/H$ connecting $b$ and $b^\prime$. Fix a point $g\in\pi^{-1}(b)$. Then there exists a curve $\tilde{\gamma}\colon[0,1]\to G$ starting from $g$ such that $\pi\circ\tilde{\gamma}=\gamma$ and $\tilde{\gamma}^\prime(t)\in(L_{\tilde{\gamma}(t)})_*\mf{h}^\perp$ for all $t\in[0,1]$ by a standard argument. For this curve, we have 
\begin{equation*}
d\left(b,b^\prime\right)=\int_0^1\left\|\gamma^\prime(t)\right\|dt=\int_0^1\left\|\tilde{\gamma}^\prime(t)\right\|dt\geq d\left(\pi^{-1}(b),\pi^{-1}\left(b^\prime\right)\right). 
\end{equation*}

\begin{claim}
$d\left(b,b^\prime\right)=d_\mca{H}\left(\pi^{-1}(b),\pi^{-1}\left(b^\prime\right)\right)$. 
\end{claim}

We have $B\left(\pi^{-1}(b),C\right)=\pi^{-1}\left(B(b,C)\right)$ and $B\left(\pi^{-1}(b^\prime),C\right)=\pi^{-1}\left(B(b^\prime,C)\right)$ by the above discussion, this is obvious. 
\end{proof}

\begin{cor}\label{metric}
Let $G$ be a connected Lie group and $H$ be a connected normal closed subgroup of $G$. Take an inner product of $\mf{g}$. Endow $\mf{g}/\mf{h}$ with the inner product for which the restriction $\mf{h}^\perp\stackrel{\sim}{\to}\mf{g}/\mf{h}$ of the projection $\mf{g}\to\mf{g}/\mf{h}$ is an isometry. Consider left invariant Riemannian metrics on $G$ and $G/H$ corresponding to these inner products. Then the projection $\pi\colon G\to G/H$ is a distance respecting projection. 
\end{cor}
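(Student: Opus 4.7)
The plan is to reduce this corollary directly to part (3) of the preceding lemma. The only hypothesis of that lemma that needs checking is the existence of an $\Ad(H)$--invariant inner product on $\mf{g}/\mf{h}$; once that is in hand, the statement of the corollary is literally the content of the lemma's part (3) applied to the inner product transported from $\mf{h}^\perp$.

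First, I would verify the invariance. Since $H$ is normal in $G$, the Lie algebra $\mf{h}$ is an ideal in $\mf{g}$, so $[\mf{h},\mf{g}]\subset\mf{h}$. This means that for every $X\in\mf{h}$, the endomorphism $\ad X$ of $\mf{g}$ descends to the zero endomorphism of the quotient $\mf{g}/\mf{h}$. Because $H$ is connected, it is generated as a group by $\exp\mf{h}$, and on the quotient we have $\Ad(\exp X)=e^{\ad X}$ acting as the identity on $\mf{g}/\mf{h}$ for every $X\in\mf{h}$. Hence $\Ad(H)$ acts trivially on $\mf{g}/\mf{h}$, and consequently every inner product on $\mf{g}/\mf{h}$ is $\Ad(H)$--invariant. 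In particular, the inner product transported from $\mf{h}^\perp$ via the isomorphism $\mf{h}^\perp\stackrel{\sim}{\to}\mf{g}/\mf{h}$ is $\Ad(H)$--invariant.

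Second, I would simply invoke the lemma. The inner product on $\mf{g}$ in the corollary is precisely the setup of the lemma's part (2): the restriction of the projection $\mf{g}\to\mf{g}/\mf{h}$ to $\mf{h}^\perp$ is an isometry. The corresponding left invariant metrics on $G$ and $G/H$ (under (1), using the just verified $\Ad(H)$--invariance) are exactly those in the corollary. Part (3) of the lemma, which needs $G$ connected (and this is among our hypotheses), then states that $\pi\colon G\to G/H$ is a distance respecting projection, which is the conclusion of the corollary.

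There is no real obstacle: the content is entirely in checking $\Ad(H)$--invariance, and for a connected normal closed subgroup this invariance is automatic. The mild point to be careful about is matching the conventions: the inner product on $\mf{g}/\mf{h}$ that the lemma requires to be $\Ad(H)$--invariant is the same one defined in the corollary via $\mf{h}^\perp$, so there is no ambiguity in chaining the two statements together.
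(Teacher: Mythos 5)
Your proof is correct and follows exactly the paper's route: the paper's entire proof is the observation that $H\stackrel{\Ad}{\curvearrowright}\mf{g}/\mf{h}$ is trivial (since $\mf{h}$ is an ideal and $H$ is connected), after which part (3) of the preceding lemma applies verbatim. You have merely written out the details of that one-line argument.
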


\begin{proof}
This is because $H\stackrel{\Ad}{\curvearrowright}\mf{g}/\mf{h}$ is trivial. 
\end{proof}

Let us return to our previous setting: $M\stackrel{\rho_0}{\curvearrowleft}S$. Take an action $\rho\in A(\mca{F},S)$ and let $a\colon M\times S\to S$ be the cocycle over $\rho_0$ defined by $\rho$, that is, $\rho_0(x,s)=\rho\left(x,a(x,s)\right)$. 

\begin{lem}
For any $x\in M$, the map $S\to S$ taking $s$ to $a(x,s)$ is a biLipschitz diffeomorphism for any left invariant Riemannian metric on $S$. 
\end{lem}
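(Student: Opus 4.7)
The plan is to compute the derivative of $\psi_x(s):=a(x,s)$ explicitly and show that it is uniformly bounded above and below, which via a length-of-curves argument gives the biLipschitz property.

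The first step is to use the cocycle identity. From $\rho_0(x,ss')=\rho_0(\rho_0(x,s),s')=\rho(\rho_0(x,s),a(\rho_0(x,s),s'))$ and $\rho_0(x,s)=\rho(x,a(x,s))$, we obtain
\begin{equation*}
a(x,ss')=a(x,s)\,a(\rho_0(x,s),s').
\end{equation*}
Differentiating this at $s'=1$ in the direction $X\in\mf{s}$ shows that
\begin{equation*}
(\psi_x)_\ast (L_s)_\ast X=(L_{a(x,s)})_\ast \alpha(\rho_0(x,s),X),
\end{equation*}
where $\alpha(y,X):=\tfrac{d}{dt}|_{t=0}a(y,e^{tX})\in\mf{s}$. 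Differentiating $\rho_0(y,e^{tX})=\rho(y,a(y,e^{tX}))$ at $t=0$ identifies $\alpha(y,X)$ as the unique element of $\mf{s}$ whose $\rho$-fundamental vector field at $y$ equals the $\rho_0$-fundamental vector field for $X$ at $y$. In terms of the two canonical $1$-forms this means $\alpha(y,X)=\omega_y\circ (\omega_0)_y^{-1}(X)$. Hence $A_y:=\omega_y\circ(\omega_0)_y^{-1}$ is a smooth field of linear automorphisms of $\mf{s}$ depending only on $y\in M$.

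The second step uses compactness. Since $y\mapsto A_y$ is a continuous map from the compact manifold $M$ to $\GL(\mf{s})$, there exists a constant $C\geq 1$ such that $\|A_y\|\leq C$ and $\|A_y^{-1}\|\leq C$ uniformly in $y$. For a fixed left invariant Riemannian metric on $S$ coming from an inner product on $\mf{s}$, left translations $(L_s)_\ast$ and $(L_{a(x,s)})_\ast$ are isometries, so the formula above gives
\begin{equation*}
\|(\psi_x)_\ast v\|_{a(x,s)}=\|A_{\rho_0(x,s)}(\omega_0)_s(v)\|\leq C\|v\|_s
\end{equation*}
for every $v\in T_sS$, and symmetrically $\|((\psi_x)^{-1})_\ast w\|\leq C\|w\|$ for every tangent vector $w$, where we use that $\psi_x$ is a bijection by Asaoka \cite[Lemma 1.4.6]{A} (already invoked in Lemma \ref{so we can}) and that its inverse has derivative governed by $A_y^{-1}$.

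The final step is the length argument: since $S$ is connected, any two points $s_1,s_2\in S$ are joined by a smooth curve of length arbitrarily close to $d(s_1,s_2)$, and a uniform bound on $\|(\psi_x)_\ast\|$ implies that its image has length at most $C\,d(s_1,s_2)$, so $d(\psi_x(s_1),\psi_x(s_2))\leq C\,d(s_1,s_2)$, and similarly for $\psi_x^{-1}$. The main point requiring care is that the bound on the derivative must be uniform in $s\in S$; the only non-obvious input is that $\rho_0(x,s)$ ranges over $M$ (or at least a closed orbit) as $s$ varies, so the uniform bound comes from compactness of $M$ rather than from any control of the cocycle $a$ itself at infinity.
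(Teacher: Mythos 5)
Your proof is correct and follows essentially the same route as the paper: the field $A_y=\omega_y\circ(\omega_0)_y^{-1}$ whose uniform boundedness you extract from compactness of $M$ is exactly the comparison constant $C$ between the two metrics $\|\cdot\|_{\rho_0}$ and $\|\cdot\|_{\rho}$ on $T\mca{F}$ that the paper uses, and both arguments then integrate along curves (modulo the harmless notational slip where $(\omega_0)_s(v)$ should be the Maurer--Cartan form $\Theta_s(v)$).
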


\begin{proof}
See Asaoka \cite[Lemma 1.4.6]{A} for a proof that the map is a diffeomorphism. 

To see that the map is biLipschitz, equip $T\mca{F}$ with the metric $\langle\cdotp,\cdotp\rangle_{\rho_0}$ for which $(\omega_0)_x\colon T_x\mca{F}\stackrel{\sim}{\to}\mf{s}$ is isometric for all $x\in M$. We can easily verify that the metric obtained by pulling back $\langle\cdotp,\cdotp\rangle_{\rho_0}$ through the map $S\to M$ taking $s$ to $\rho_0(x,s)$ is the original Riemannian metric on $S$. Similarly we also have the metric $\langle\cdotp,\cdotp\rangle_\rho$ on $T\mca{F}$ defined by using $\rho$. Since $M$ is compact, there is a constant $C>1$ such that $C^{-1}\|\cdotp\|_{\rho_0}\leq\|\cdotp\|_\rho\leq C\|\cdotp\|_{\rho_0}$. For any $s,s^\prime\in S$, take a minimal geodesic $\gamma\colon [0,1]\to S$ connecting $s$ and $s^\prime$. Then 
\begin{align*}
d\left(s,s^\prime\right)&=\int_0^1\left\|\gamma^\prime(t)\right\|dt=\int_0^1\left\|\frac{d}{dt}\rho_0\left(x,\gamma(t)\right)\right\|_{\rho_0}dt\\
&=\int_0^1\left\|\frac{d}{dt}\rho\left(x,a\left(x,\gamma(t)\right)\right)\right\|_{\rho_0}dt\\
&\geq\frac{1}{C}\int_0^1\left\|\frac{d}{dt}\rho\left(x,a\left(x,\gamma(t)\right)\right)\right\|_\rho dt\\
&=\frac{1}{C}\int_0^1\left\|\frac{d}{dt}a\left(x,\gamma(t)\right)\right\|dt\\
&\geq\frac{1}{C}d\left(a(x,s),a\left(x,s^\prime\right)\right). 
\end{align*}
The inverse is also Lipschitz by the same argument. 
\end{proof}

Recall that $\mf{h}$ is a subspace between $[\mf{s},\mf{s}]$ and $\mf{n}$. Let $K_\rho$ and $H$ be the Lie subgroups corresponding to $\ker\varphi_\rho$ and $\mf{h}$ and $\tilde{\varphi}_\rho\colon S/K_\rho\to S/H$ be the map induced by $\varphi_\rho\colon\mf{s}\to\mf{s}/\mf{h}$. Both $S/K_\rho$ and $S/H$ are vector groups and $\tilde{\varphi}_\rho$ is a linear isomorphism. The following proposition is the key for our method to reduce the set $\mca{X}_{\rho_0}$. 

\begin{prop}\label{key prop}
For any $\rho\in A(\mca{F},S)$ and $x\in M$, consider the next diagram 
\begin{equation*}
\begin{tikzcd}
S\ar[r,"{a(x,\cdotp)}"]\ar[d,"\tilde{q}"']&S\ar[d,"\tilde{p}"]\\
S/K_\rho\ar[r,"\sim","\tilde{\varphi}_\rho"']&S/H, 
\end{tikzcd}
\end{equation*}
where the vertical maps are the natural projections. We give $S$ a left invariant Riemannian metric and $S/K_\rho$, $S/H$ the left invariant Riemannian metrics considered in Corollary \ref{metric}. Then $a(x,\cdotp)$ is a fiber respecting biLipschitz diffeomorphism over $\tilde{\varphi}_\rho$. 
\end{prop}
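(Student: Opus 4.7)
The preceding lemma already establishes that $a(x,\cdotp)\colon S\to S$ is a biLipschitz diffeomorphism, so the only remaining content of the proposition is the fiber respecting property. My plan is to extract the required estimate essentially for free from Lemma \ref{important eq}. Since $S/K_\rho$ and $S/H$ are simply connected abelian Lie groups, they are canonically identified via the exponential map with their Lie algebras $\mf{s}/\ker\varphi_\rho$ and $\mf{s}/\mf{h}\simeq V_0$. Under these identifications, $\tilde{q}$ and $\tilde{p}$ become $s\mapsto\int_1^s q\Theta$ and $s\mapsto\int_1^s\Theta^0$ (both integrals are well defined because $q\Theta$ and $\Theta^0$ are closed, their Lie algebra targets being abelian), and $\tilde{\varphi}_\rho$ becomes $\bar{\varphi}_\rho$. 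Lemma \ref{important eq} then rewrites as
\begin{equation*}
\tilde{p}(a(x,s))-\tilde{\varphi}_\rho(\tilde{q}(s))=h(\rho_0(x,s))-h(x),
\end{equation*}
and since $M$ is compact and $h\colon M\to V_0$ is continuous, the right hand side is bounded uniformly in $s$ by some $C_0$ (for any fixed norm on $V_0$, which up to a bounded factor is the one coming from the metric of Corollary \ref{metric}).

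Next I upgrade this pointwise identity to the Hausdorff bound between fibers. Fix $b\in S/K_\rho$. For any $s\in\tilde{q}^{-1}(b)$, the displayed identity gives $d(\tilde{p}(a(x,s)),\tilde{\varphi}_\rho(b))\leq C_0$, so by the distance respecting property of $\tilde{p}$ (Corollary \ref{metric}), the point $a(x,s)$ lies in the $C_0$--neighborhood of $\tilde{p}^{-1}(\tilde{\varphi}_\rho(b))$; this puts $a(x,\tilde{q}^{-1}(b))$ into that neighborhood. For the reverse containment, take $s'\in\tilde{p}^{-1}(\tilde{\varphi}_\rho(b))$ and set $s=a(x,\cdotp)^{-1}(s')$ (using bijectivity from the preceding lemma). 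The same identity, together with the fact that $\tilde{\varphi}_\rho$ is a linear isomorphism of finite dimensional vector spaces and hence biLipschitz, bounds $d(\tilde{q}(s),b)$ by some constant $C_1$. The distance respecting property of $\tilde{q}$, combined with completeness of the left invariant metric on $S$, then produces $\tilde{s}\in\tilde{q}^{-1}(b)$ with $d(s,\tilde{s})\leq C_1$, and finally the Lipschitz constant $L$ of $a(x,\cdotp)$ gives $d(s',a(x,\tilde{s}))\leq LC_1$. Taking $C=\max(C_0,LC_1)$ completes the Hausdorff estimate.

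There is no serious obstacle: Lemma \ref{important eq}, Corollary \ref{metric}, and the biLipschitz property of $a(x,\cdotp)$ do all the work. The only technical care required is bookkeeping for the various identifications between $S/K_\rho$, $S/H$, their Lie algebras, and the complement $V_0\subset\mf{s}$, together with the observation that all norms on a finite dimensional vector space are equivalent, so boundedness of $h$ in $V_0$ really does translate into a bound on distance in $S/H$.
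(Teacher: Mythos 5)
Your proposal is correct and follows essentially the same route as the paper: exponentiate the identity of Lemma \ref{important eq} to see the square commutes up to the bounded error $h(\rho_0(x,s))-h(x)$, then convert that into the two Hausdorff inclusions using the distance respecting property of $\tilde{p}$ and $\tilde{q}$ (Corollary \ref{metric}) and the Lipschitz constant of $a(x,\cdotp)$. The only cosmetic difference is that the paper obtains the reverse inclusion by applying $\bar{\varphi}_\rho^{-1}$ to Lemma \ref{important eq} and working with the inverse map of $a(x,\cdotp)$, whereas you invert $\tilde{\varphi}_\rho$ as a biLipschitz linear map; these are the same estimate.
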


\begin{proof}
Let $q\colon\mf{s}\to\mf{s}/\ker\varphi_\rho$ and $p\colon\mf{s}\to\mf{s}/\mf{h}$ be the natural projections. Then we have 
\begin{align*}
e^{\int_1^sq\Theta}&=\tilde{q}(s), \\
e^{\int_1^sp\Theta}&=\tilde{p}(s)
\end{align*}
for all $s\in S$. Indeed, writing $s=e^{X_1}\cdots e^{X_k}$ for some $X_1,\ldots,X_k\in\mf{s}$ and then 
\begin{align*}
e^{\int_1^sq\Theta}&=\exp\left(\int_1^{e^{X_1}}q\Theta+\int_{e^{X_1}}^{e^{X_1}e^{X_2}}q\Theta+\cdots+\int_{e^{X_1}\cdots e^{X_{k-1}}}^sq\Theta\right)\\
&=e^{q(X_1)+\cdots+q(X_k)}. 
\end{align*}
Let $\bar{\varphi}_\rho\colon\mf{s}/\ker{\varphi_\rho}\to\mf{s}/\mf{h}$ be the map induced by $\varphi_\rho$. By taking $\exp$ to the equation $\bar{\varphi}_\rho\left(\int_1^sq\Theta\right)+h\left(\rho_0(x,s)\right)-h(x)=\int_1^{a(x,s)}p\Theta$ from Lemma \ref{important eq}, we get 
\begin{equation*}
\tilde{\varphi}_\rho\tilde{q}(s)e^{h\left(\rho_0(x,s)\right)-h(x)}=\tilde{p}\left(a(x,s)\right). 
\end{equation*}
Since $M$ is compact, there is a constant $C>0$ such that 
\begin{equation}\label{gosa}
d\left(\tilde{\varphi}_\rho\tilde{q}(s),\tilde{p}\left(a(x,s)\right)\right)=d\left(1,e^{h\left(\rho_0(x,s)\right)-h(x)}\right)<C
\end{equation}
for any $s\in S$. This means that the diagram commutes up to bounded distance. Let $f$ be the inverse map of $a(x,\cdotp)$. By Lemma \ref{important eq}, 
\begin{equation*}
\int_1^{f(s)}q\Theta+\bar{\varphi}_\rho^{-1}\left(h\left(\rho_0\left(x,f(s)\right)\right)-h(x)\right)=\bar{\varphi}_\rho^{-1}\left(\int_1^sp\Theta\right). 
\end{equation*}
Taking $\exp$ to both sides of the equation, 
\begin{equation*}
\tilde{q}f(s)e^{\bar{\varphi}_\rho^{-1}\left(h\left(\rho_0\left(x,f(s)\right)\right)-h(x)\right)}=\tilde{\varphi}_\rho^{-1}\tilde{p}(s). 
\end{equation*}
As before there is a constant $C^\prime>0$ such that 
\begin{equation}\label{gosa2}
d\left(\tilde{q}f(s),\tilde{\varphi}_\rho^{-1}\tilde{p}(s)\right)<C^\prime. 
\end{equation}
Let $C^{\prime\prime}>0$ be a constant satisfying $d\left(a(x,s_1),a(x,s_2)\right)\leq C^{\prime\prime}d(s_1,s_2)$ for all $s_1$, $s_2\in S$. Take any $b\in S/K_\rho$. Then we can show that 
\begin{equation*}
d_\mca{H}\left(a\left(x,\tilde{q}^{-1}(b)\right),\tilde{p}^{-1}\left(\tilde{\varphi}_\rho(b)\right)\right)\leq\max\left\{C,C^\prime C^{\prime\prime}\right\}
\end{equation*}
in the following way: 

\begin{claim}
$a\left(x,\tilde{q}^{-1}(b)\right)\subset B\left(\tilde{p}^{-1}\left(\tilde{\varphi}_\rho(b)\right),C\right)$. 
\end{claim}

For any $s\in\tilde{q}^{-1}(b)$, $d\left(\tilde{\varphi}_\rho(b),\tilde{p}\left(a(x,s)\right)\right)<C$ by \eqref{gosa}. Therefore $a(x,s)\in B\left(\tilde{p}^{-1}\left(\tilde{\varphi}_\rho(b)\right),C\right)$. 

\begin{claim}
$\tilde{p}^{-1}\left(\tilde{\varphi}_\rho(b)\right)\subset B\left(a\left(x,\tilde{q}^{-1}(b)\right),C^\prime C^{\prime\prime}\right)$. 
\end{claim}

For any $s\in\tilde{p}^{-1}\left(\tilde{\varphi}_\rho(b)\right)$, $d\left(\tilde{q}f(s),b\right)<C^\prime$ by \eqref{gosa2}, so that we can find $s^\prime\in\tilde{q}^{-1}(b)$ satisfying $d\left(s^\prime,f(s)\right)\leq C^\prime$. Then $d\left(a\left(x,s^\prime\right),s\right)\leq C^{\prime\prime}d\left(s^\prime,f(s)\right)\leq C^{\prime\prime}C^\prime$. 
\end{proof}

This proposition puts a strong restriction on the map $\tilde{\varphi}_\rho$ which we want to know, as in the next section or Section \ref{alsg}.

\subsection{Proof of Theorem \ref{ogs}}
As in Introduction, let 
\begin{equation*}
A_i=
\begin{pmatrix}
\alpha^{(i)}_1&&\\
&\ddots&\\
&&\alpha^{(i)}_n
\end{pmatrix}
\end{equation*}
for $i=1,\ldots,k$ be diagonal matrices with positive diagonal entries and set $S_A=\bb{R}^n\rtimes_{A(t)}\bb{R}^k$, where $A(t)=A_1^{t_1}\cdots A_k^{t_k}$ for $t=(t_1,\ldots,t_k)\in\bb{R}^k$. Ogasawara \cite{Og} proved the following theorem to classify up to quasiisometry groups obtained from $\bb{Z}^n$ by performing certain HNN extensions several times, which is a generalization of a result of Farb and Mosher \cite{FM} on quasiisometric rigidity of abelian-by-cyclic groups.  

\begin{thm}
If 
\begin{equation*}
\begin{tikzcd}
S_A\ar[r,"f"]\ar[d]&S_{A^\prime}\ar[d]\\
\bb{R}^k\ar[r,"\varphi"']&\bb{R}^k
\end{tikzcd}
\end{equation*}
is a diagram in which vertical maps are natural projections and $f$ is a fiber respecting quasiisometry over some linear map $\varphi$, then there exists a permutation matrix $P\in\GL(n,\bb{R})$ such that $PA(t)=A^\prime\left(\varphi(t)\right)P$ for every $t\in\bb{R}^k$. In particular, there exists a diagram 
\begin{equation*}
\begin{tikzcd}
S_A\ar[r,"\sim"]\ar[d]\ar[dr,phantom,"\circlearrowright" marking]&S_{A^\prime}\ar[d]\\
\bb{R}^k\ar[r,"\varphi"']&\bb{R}^k, 
\end{tikzcd}
\end{equation*}
where the above horizontal map is an isomorphism of Lie groups taking $(v,t)$ to $\left(Pv,\varphi(t)\right)$. 
\end{thm}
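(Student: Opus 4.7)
The plan is to extract the permutation matrix from the large scale behavior of $f$ along the horospherical fibers of the projection $S_A\to\bb{R}^k$. Equip $S_A$ and $S_{A^\prime}$ with left invariant Riemannian metrics that make the $\bb{R}^n$ and $\bb{R}^k$ directions orthogonal at the identity. Along each fiber $V_t=\bb{R}^n\times\{t\}$ the induced metric is a Euclidean metric in which the $j$--th coordinate axis is rescaled relative to $V_0$ by the factor $e^{\beta_j\cdot t}$, where $\beta_j=(\log\alpha^{(1)}_j,\ldots,\log\alpha^{(k)}_j)\in\bb{R}^k$ is the weight of the $j$--th coordinate; the target carries analogous weights $\beta^\prime_j$. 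These weight vectors are what the proof ultimately has to match up.

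First I would show that, after a bounded perturbation, the map $f$ sends each fiber $V_t$ to within uniformly bounded Hausdorff distance of $V_{\varphi(t)}$ and induces between them a quasiisometry $f_t\colon V_t\to V_{\varphi(t)}$ whose quasiisometry constants do not depend on $t$; moreover, the $f_t$ intertwine the $\bb{R}^k$--flows on the two groups up to uniformly bounded error, because flowing by $t$ in $S_A$ is realized by the anisotropic dilation $A(t)$ on the $\bb{R}^n$--factor.

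The main step is a coarse differentiation / Tukia--type asymptotic rigidity argument in the spirit of Eskin, Fisher and Whyte, adapted to higher rank. Fix a unit vector $u\in\bb{R}^k$ and consider the rescaled maps obtained by conjugating $f_{su}$ with the anisotropic dilations $A(su)$ on the source and $A^\prime(\varphi(su))$ on the target. As $s\to\infty$, an ultralimit of these rescalings produces a bi-Lipschitz self-map $F_u$ of $\bb{R}^n$ that intertwines the one-parameter diagonal flow with weights $\{\beta_j\cdot u\}_j$ on the source and with weights $\{\beta^\prime_j\cdot\varphi(u)\}_j$ on the target. Differentiating $F_u$ at a density point and exploiting equivariance under the two flows forces the linear map $DF_u$ to carry each eigenspace of the source flow into an eigenspace of the target flow with the same eigenvalue. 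Varying $u$ over a generic set of directions and intersecting the resulting constraints identifies the multiset $\{\beta_j\}_j$ with $\{\beta^\prime_j\circ\varphi\}_j$, and the matching is realized by a single permutation $\sigma$ of the coordinate axes independent of $u$.

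Taking $P$ to be the permutation matrix of $\sigma$, the relation $\beta_{\sigma^{-1}(j)}=\beta^\prime_j\circ\varphi$ is equivalent to $PA(t)P^{-1}=A^\prime(\varphi(t))$, which is the conclusion. The hard part is the coarse differentiation / weight matching step: for $k>1$ the asymptotic boundary of $S_A$ is not a single Carnot sphere as in the Heintze setting treated by Farb and Mosher, but a family of such boundaries parametrised by $u\in S^{k-1}$, so one must run the differentiation argument along each $u$, control the dependence of the resulting permutation $\sigma_u$ on $u$, and prove that $\sigma_u$ is actually locally constant in $u$ before patching the data into one permutation matrix.
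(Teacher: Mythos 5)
You should first be aware that the paper does not prove this statement: it is Ogasawara's theorem, quoted from his Master's thesis \cite{Og} and used as a black box (the paper describes it as a generalization of Farb--Mosher \cite{FM}). So there is no in-paper proof to compare against, and your proposal has to be judged on its own. The overall strategy you outline --- put weight vectors $\beta_j\in\bb{R}^k$ on the coordinate axes, show $f$ coarsely respects the fibration and the $\bb{R}^k$-flow, and then match the weight data of source and target --- is indeed the standard route for quasiisometric rigidity of these groups, and is surely in the spirit of what Ogasawara does. But as a proof it has a genuine gap: the entire content of the theorem is concentrated in the sentence beginning ``Differentiating $F_u$ at a density point and exploiting equivariance \dots forces $DF_u$ to carry each eigenspace \dots into an eigenspace \dots with the same eigenvalue.'' That a horizontal-respecting quasiisometry induces, in the limit, a bi-Lipschitz map which is differentiable almost everywhere \emph{and} whose derivative is forced to be weight-preserving is precisely the hard analytic theorem of Farb--Mosher (in rank one) and of the coarse-differentiation machinery in higher rank; asserting it is not proving it. In particular the existence and bi-Lipschitz property of the ultralimit $F_u$, the uniformity in $t$ of the quasiisometry constants of the induced fiber maps $f_t$ (which themselves are only coarsely defined from the hypothesis that $f$ is fiber respecting), and the passage from ``a.e.\ derivative preserves eigenspaces'' to a statement about a single global linear matching all need real arguments.

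The second gap is the patching step you flag yourself but do not resolve. For each generic direction $u$ you obtain a bijection matching the multiset $\{\beta_j\cdot u\}_j$ with $\{\beta_j^\prime\cdot\varphi(u)\}_j$, but these scalar matchings merge and split as $u$ crosses the walls where two weight vectors have equal pairing with $u$, so a permutation $\sigma_u$ of individual axes need not be locally constant, and ``intersecting the constraints'' is not automatic. The clean fix is to work with weight \emph{spaces} rather than axes: group the coordinates by their weight vector $\beta\in\bb{R}^k$, show that for a dense set of $u$ the scalar matching determines an equality of the multisets $\{\beta_j\}_j$ and $\{\varphi^*\beta_j^\prime\}_j$ counted with multiplicity (e.g.\ by comparing the functions $u\mapsto\sum_j e^{\beta_j\cdot u}$, or by a Vandermonde-type independence argument over finitely many generic directions), and only then choose any bijection of coordinates realizing the multiset equality; since the $A_i$ are diagonal, any such bijection yields a permutation matrix $P$ with $PA(t)=A^\prime(\varphi(t))P$ for all $t$. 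With that reorganization the final step of your argument is fine, but the coarse differentiation input remains the substantive missing piece.
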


Consider an action $M\stackrel{\rho_0}{\curvearrowleft}S$ for $S=S_A$ and assume that for any $j$ there is some $i$ for which $\alpha^{(i)}_j\ne1$. It is easy to check that this is equivalent to $ [\mf{s},\mf{s}]=\bb{R}^n$. We take $\mf{h}=[\mf{s},\mf{s}]$. For any $\rho\in A(\mca{F},S)$, $\ker\varphi_\rho=[\mf{s},\mf{s}]$ and then $K_\rho=\bb{R}^n$. Therefore by Proposition \ref{key prop}, $a(x,\cdotp)$ is a fiber respecting biLipschitz diffeomorphism over $\tilde{\varphi}_\rho$ in 
\begin{equation*}
\begin{tikzcd}
S\ar[r,"{a(x,\cdotp)}"]\ar[d]&S\ar[d]\\
\bb{R}^k\ar[r,"\tilde{\varphi}_\rho"']&\bb{R}^k. 
\end{tikzcd}
\end{equation*}
Hence by the above theorem we get $\tilde{\varphi}_\rho\in W_S$. Using Theorem \ref{SC2}, this completes the proof of Theorem \ref{ogs}. 

\begin{rem}
There are solvable Lie groups for which this method does not work. Consider the solvable Lie group $S=\bb{R}^2\rtimes_{R(t)}\bb{R}$ for $R(t)=
\begin{pmatrix}
\cos2\pi t&-\sin2\pi t\\
\sin2\pi t&\cos2\pi t
\end{pmatrix}
$. Then the standard Euclidean metric on $\bb{R}^3$ identified with $S$ is a left invariant Riemannian metric on $S$. Hence for any $c\in\bb{R}\setminus\{0\}$ there is a biLipschitz diffeomorphism $f\colon S\to S$ such that 
\begin{equation*}
\begin{tikzcd}
S\ar[r,"f"]\ar[d]\ar[dr,phantom,"\circlearrowright" marking]&S\ar[d]\\
\bb{R}\ar[r,"\times c"']&\bb{R}. 
\end{tikzcd}
\end{equation*}
So we do not have any restriction on $c$. 
\end{rem}

\section{Parameter rigidity of suspension-like actions on mapping tori}
We prove Theorem \ref{mapping torus} here. The proof is based on Theorem \ref{SC2}, Proposition \ref{key prop} and a theorem of Farb and Mosher \cite{FM}. To calculate cohomologies we use Mayer--Vietoris exact sequences. 

The action which we consider is $M=\bb{Z}\backslash\left(\bb{T}^n\times\bb{R}\right)\curvearrowleft S$, where $S=V\rtimes_\Phi\bb{R}$. Let $\mca{F}$ be its orbit foliation.

\subsection{Cohomology}
The bracket operation of the Lie algebra $\mf{s}=V\rtimes_{\Phi_*}\bb{R}$ is 
\begin{align*}
\left[(v_1,t_1),(v_2,t_2)\right]&=\left([v_1,v_2]+\Phi_*(t_1)v_2-\Phi_*(t_2)v_1,[t_1,t_2]\right)\\
&=\left(\Phi_*(t_1)v_2-\Phi_*(t_2)v_1,0\right). 
\end{align*}
Let $T=(0,1)\in\mf{s}$. We will use the notation $\ad^0X=\ad X|_V$ for $X\in\mf{s}$. Then $\ad^0T=\Phi_*(1)$, so that $\Phi_t=e^{t\ad^0T}$ and $A|_V=e^{\ad^0T}$. Since we assume $1$ is not an eigenvalue of $A|_V$, $\ad^0T\colon V\to V$ does not have $0$ as an eigenvalue, hence it is invertible. So we have $[\mf{s},\mf{s}]=V$. 

Let us examine which cohomologies to compute. We take $[\mf{s},\mf{s}]=V$ as a subspace $\mf{h}$ appearing in Theorem \ref{SC}. We consider representations $\mf{s}\stackrel{\ad\circ\varphi}{\curvearrowright}\Gr_V(\mf{s})=\mf{s}/V\oplus V$ for $\varphi=\varphi_\rho$ for some $\rho\in A(\mca{F},S)$. The first component $\mf{s}\stackrel{\ad\circ\varphi}{\curvearrowright}\mf{s}/V$ is just the trivial representation. Let $\bar{T}$ be the image of $T$ under the natural projection $\mf{s}\twoheadrightarrow\mf{s}/V$ and put $\varphi(T)=c\bar{T}$. Then the second component $\mf{s}\stackrel{\ad\circ\varphi}{\curvearrowright}V$ is given by $(v,t)v^\prime=ct\left(\ad^0T\right)v^\prime=c\left(\ad^0(v,t)\right)v^\prime$, so that it is just $\mf{s}\stackrel{c\ad^0}{\curvearrowright}V$. Therefore we must compute 
\begin{equation*}
H^1(\mca{F})\quad\text{and}\quad H^1\left(\mca{F};\mf{s}\stackrel{c\ad^0}{\curvearrowright}V\right)
\end{equation*}
for all $c\in\bb{R}$. But we will see that vanishing of $H^1(\mca{F})$ and $H^1\left(\mca{F};\mf{s}\stackrel{\pm\ad^0}{\curvearrowright}V\right)$ suffices by the method of large scale geometry and this reduction fits our assumption of Zariski density nicely.

\subsection{Application of large scale geometry}\label{alsg}
Here we use rigidty of quasiisometries between solvable Lie groups found by Farb and Mosher. They prepare the following theorem as a tool to prove quasiisometic rigidity of abelian-by-cyclic groups. In \cite{FM} a slightly different statement (Theorem 5.2) is given, but the following is also proved. 

\begin{thm}\label{farb mosher}
Let $\Phi^{(1)}_t$ and $\Phi^{(2)}_t$ be one-parameter subgroups of $\GL(p,\bb{R})$ and $S_i=\bb{R}^p\rtimes_{\Phi^{(i)}}\bb{R}$.  If 
\begin{equation*}
\begin{tikzcd}
S_1\ar[r,"\phi"]\ar[d]&S_2\ar[d]\\
\bb{R}\ar[r,"\id"']&\bb{R}
\end{tikzcd}
\end{equation*}
is a diagram in which vertical maps are natural projections and $\phi$ is a fiber respecting quasiisometry over $\id$. Then absolute Jordan forms of $\Phi^{(1)}_1$ and $\Phi^{(2)}_1$ coincide up to permutation of Jordan blocks. Here an absolute Jordan form refers to a matrix obtained by replacing the diagonal entries of a Jordan normal form over the complex field with their absolute values. 
\end{thm}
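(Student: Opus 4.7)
The plan is to read the absolute Jordan form of $\Phi^{(i)}_1$ off the large-scale geometry of $S_i$ and then argue that a fiber-respecting quasi-isometry over $\id$ must preserve this invariant. The guiding idea is that moving vertically by $t$ in $S_i$ conjugates the horizontal direction by $\Phi^{(i)}_t$, so distances within a single fiber $\bb{R}^p \times \{t_0\}$ measured inside $S_i$ record the eigenvalue structure (in absolute value) of $\Phi^{(i)}_1$ through a precise logarithmic distortion.

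More concretely, first I would equip each $S_i$ with a left-invariant Riemannian metric. For $v \in \bb{R}^p$, a standard calculation shows that the ambient distance in $S_i$ from $(0,0)$ to $(v,0)$ agrees, up to bounded additive error, with
\[
\inf_{t \in \bb{R}} \bigl(\|\Phi^{(i)}_{-t}(v)\| + 2|t|\bigr),
\]
since the most efficient path goes up (or down) by $t$, traverses the fiber where $v$ is scaled by $\Phi^{(i)}_{-t}$, then returns. As $\|v\| \to \infty$ this distance function decomposes $\bb{R}^p$ according to the absolute values $|\lambda_j|$ of the eigenvalues of $\Phi^{(i)}_1$ and the associated generalized-eigenspace flag: the rate at which the ambient distance grows along each layer is determined by exactly one such $|\lambda_j|$, and the Jordan block sizes appear as polynomial corrections.

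Next I would extract the Jordan data via a boundary argument. In the Heintze case (all $|\lambda_j|$ on the same side of $1$), $S_i$ is Gromov hyperbolic; its visual boundary is $\bb{R}^p$ carrying a Carnot-type parabolic visual metric whose graded dilation structure encodes the absolute Jordan form. Because $\phi$ is fiber-respecting over $\id$, it induces a quasi-symmetric homeomorphism of these boundaries, and by Pansu's differentiation theorem such a map must, at almost every point, have a graded derivative intertwining the dilations. This forces the absolute Jordan forms of $\Phi^{(1)}_1$ and $\Phi^{(2)}_1$ to agree up to a permutation of Jordan blocks.

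The main obstacle is the general case: if the eigenvalues of $\Phi^{(i)}_1$ mix expansion and contraction, or lie on the unit circle, then $S_i$ is not hyperbolic and no single visual boundary is available. Here I would follow the strategy of Farb and Mosher: separately pass to the ``upper'' horospheric limit ($t \to +\infty$) and the ``lower'' one ($t \to -\infty$), each of which is a Carnot group whose dilation exponents are the eigenvalues on one side of $1$ in modulus. The fiber-respecting condition over $\id$ produces induced maps between corresponding boundaries that are quasi-symmetric after a careful tracking of bounded additive errors across fibers, and Pansu's theorem applies to each. Combining the two one-sided graded matchings then yields the full absolute Jordan form. The technically delicate point throughout is precisely this error control in the non-hyperbolic setting: proving that the induced boundary maps are genuinely quasi-symmetric, rather than merely coarse, is where the substantive work of \cite{FM} lies, and it is this step I would expect to occupy the bulk of a detailed proof.
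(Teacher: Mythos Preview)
The paper does not prove this statement: it is quoted as a result of Farb and Mosher \cite{FM}, introduced with the sentence ``In \cite{FM} a slightly different statement (Theorem 5.2) is given, but the following is also proved.'' The theorem is then used as a black box in Section \ref{alsg}. There is therefore no proof in the paper to compare your proposal against.

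For what it is worth, your sketch is broadly in the spirit of the Farb--Mosher argument: they do analyze horospherical foliations and extract the absolute Jordan data from induced maps on suitable limit objects, and the substantive difficulty is exactly the error control you flag in the non-hyperbolic (mixed-eigenvalue) case. That said, your outline is a high-level plan rather than a proof; several of the mechanisms you invoke (visual boundaries in the Heintze case, Pansu differentiation) are adjacent to but not identical with the coarse-geometric machinery actually deployed in \cite{FM}, and the details would require real work to pin down. Since the present paper treats the theorem purely as an input, none of that work is expected here.
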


Returning to our situation, we have 
\begin{equation*}
\begin{tikzcd}
S\ar[r,"{a(x,\cdotp)}"]\ar[d]&S\ar[d]\\
\bb{R}\ar[r,"\times c"']&\bb{R}, 
\end{tikzcd}
\end{equation*}
where $a(x,\cdotp)$ is a fiber respecting biLipschitz diffeomorphism by Proposition \ref{key prop}. Composing with 
\begin{equation*}
\begin{tikzcd}
S\ar[r,"\sim"]\ar[d]\ar[dr,phantom,"\circlearrowright" marking]&V\rtimes_{\Phi_{ct}}\bb{R}\ar[d]\\
\bb{R}\ar[r,"\times\frac{1}{c}"']&\bb{R}, 
\end{tikzcd}
\end{equation*}
where the above horizontal map takes $(v,t)$ to $\left(v,\frac{t}{c}\right)$, we get 
\begin{equation*}
\begin{tikzcd}
S\ar[r]\ar[d]&V\rtimes_{\Phi_{ct}}\bb{R}\ar[d]\\
\bb{R}\ar[r,"\id"']&\bb{R}, 
\end{tikzcd}
\end{equation*}
in which the above horizontal map is a fiber respecting biLipschitz diffeomorphism over $\id$. Then by Theorem \ref{farb mosher}, absolute Jordan forms of $\Phi_1$ and $\Phi_c$ coincide. Let $\alpha_1,\ldots,\alpha_p$ be the eigenvalues of $\ad^0T$, so that eigenvalues of $\Phi_t=e^{t\ad^0T}$ are $e^{t\alpha_1},\ldots,e^{t\alpha_p}$. Therefore the sets $\left\{\left|e^{\alpha_1}\right|,\ldots,\left|e^{\alpha_p}\right|\right\}$ and $\left\{\left|e^{\alpha_1}\right|^c,\ldots,\left|e^{\alpha_p}\right|^c\right\}$ coincide with multiplicity. Since $\Phi_1$ has an eigenvalue of absolute value not equal to $1$, we must have $c=\pm1$. So we need to verify only vanishing of $H^1(\mca{F})$ and $H^1\left(\mca{F};\mf{s}\stackrel{\pm\ad^0}{\curvearrowright}V\right)$ to show parameter rigidity of the action.

\subsection{Mayer--Vietoris argument}
Let $\mf{s}\stackrel{\pi}{\curvearrowright}W$ be a representation on a finite dimensional real vector space satisfying $\pi(v)=0$ for all $v\in V$. We will try to calculate the first cohomology $H^1\left(\mca{F};\mf{s}\stackrel{\pi}{\curvearrowright}W\right)$. 

Let $U_1$ and $U_2$ be the projections to $M$ of $\bb{T}^n\times(0,1)$ and $\bb{T}^n\times(-\frac{1}{2},\frac{1}{2})$, so that $M=U_1\cup U_2$. Then we have a short exact sequence of cochain complexes: 
\begin{equation*}
0\to\Omega^*(\mca{F};W)\to\Omega^*\left(\mca{F}|_{U_1};W\right)\oplus\Omega^*\left(\mca{F}|_{U_2};W\right)\to\Omega^*\left(\mca{F}|_{U_1\cap U_2};W\right)\to0. 
\end{equation*}
The second map maps $\xi$ to $\left(\xi|_{U_1},\xi|_{U_2}\right)$ and the third map maps $(\xi_1,\xi_2)$ to $\xi_2|_{U_1\cap U_2}-\xi_1|_{U_1\cap U_2}$. Hence we obtain a exact sequence of the cohomology
\begin{multline*}
H^0\left(\mca{F}|_{U_1};\pi\right)\oplus H^0\left(\mca{F}|_{U_2};\pi\right)\stackrel{P}{\to}H^0\left(\mca{F}|_{U_1\cap U_2};\pi\right)\\
\to H^1(\mca{F};\pi)\to H^1\left(\mca{F}|_{U_1};\pi\right)\oplus H^1\left(\mca{F}|_{U_2};\pi\right)\stackrel{Q}{\to} H^1\left(\mca{F}|_{U_1\cap U_2};\pi\right), 
\end{multline*}
so that we have 
\begin{equation}
H^1(\mca{F};\pi)\simeq\coker P\oplus\ker Q. \label{Hcokker}
\end{equation}

To compute $H^1(\mca{F};\pi)$, we first compute $H^1\left(\mca{F}|_{U_1};\pi\right)$. Let $\mca{G}$ be the orbit foliation of the translation action $\bb{T}^n\curvearrowleft V$. Define $\iota\colon\bb{T}^n\hookrightarrow U_1$ by $\iota(x)=\left(x,\frac{1}{2}\right)$. The map $\iota^*\colon\Omega^*\left(\mca{F}|_{U_1};W\right)\to\Omega^*(\mca{G};W)$ obtained by pull back is a cochain map\footnote{However, $p^*\colon\Omega^*(\mca{G};W)\to\Omega^*\left(\mca{F}|_{U_1};W\right)$ is not a cochain map, where $p\colon U_1\to\bb{T}^n$ maps $(x,t)$ to $x$. } between $\left(\Omega^*\left(\mca{F}|_{U_1};W\right),\df+\pi\omega_0\wedge\right)$ and $\left(\Omega^*(\mca{G};W),d_\mca{G}\right)$, where $\omega_0$ is the canonical 1--form of the action $M\curvearrowleft S$. In fact 
\begin{equation*}
\iota^*\left(\df+\pi\omega_0\wedge\right)\xi=d_\mca{G}\iota^*\xi+\iota^*\left(\pi\omega_0\wedge\xi\right)=d_\mca{G}\iota^*\xi
\end{equation*}
for $\xi\in\Omega^k\left(\mca{F}|_{U_1};W\right)$ due to our assumption on $\pi$. Thus we get the induced map on cohomologies: 
\begin{equation*}
\iota^*\colon H^*\left(\mca{F}|_{U_1};\pi\right)\to H^*(\mca{G};W). 
\end{equation*}
Note that $W$ in $H^*(\mca{G};W)$ is just a direct sum of the trivial coefficient $\bb{R}$. 

\begin{lem}\label{iso}
$\iota^*\colon H^1\left(\mca{F}|_{U_1};\pi\right)\to H^1(\mca{G};W)$ is an isomorphism. 
\end{lem}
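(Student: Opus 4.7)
The plan is to show that $\iota^*$ is one half of a cochain-homotopy equivalence, exploiting the fact that $\mca{F}|_{U_1}$ has a product structure and that $\pi$ is trivial on $V$. Since $V$ is $\Phi_t$-invariant for every $t\in\bb{R}$, the leaf of $\mca{F}|_{U_1}$ through $(x,t)\in U_1\cong\bb{T}^n\times(0,1)$ is exactly $L_x\times(0,1)$, where $L_x$ is the $\mca{G}$-leaf through $x$; so $\mca{F}|_{U_1}$ is a product foliation. Writing $\omega_0=\omega_V+T\theta$, where $\theta$ is the leafwise 1-form dual to the $T$-direction, the hypothesis $\pi|_V=0$ reduces the connection form to $\pi\omega_0=\pi(T)\theta$, so the twisted leafwise differential decouples into a $\mca{G}$-direction part and a $(0,1)$-direction part carrying the only nontrivial piece of the connection.

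Next I would construct a quasi-inverse $q^*$ to $\iota^*$ by parallel transport along $T$. For $\alpha\in\Omega^k(\mca{G};W)$, set
\[
(q^*\alpha)(x,t)(v_1,\ldots,v_k)=e^{-(t-1/2)\pi(T)}\,\alpha(x)(v_1,\ldots,v_k)
\]
for $v_1,\ldots,v_k$ in the $\mca{G}$-direction, and declare $q^*\alpha$ to vanish whenever $T$ is substituted. A direct calculation gives $D(q^*\alpha)=q^*(d_\mca{G}\alpha)$: the $\theta$-contribution of $\df q^*\alpha$ coming from $\partial_t$ of the exponential prefactor is $-\pi(T)\theta\wedge q^*\alpha$, which cancels the connection term $\pi(T)\theta\wedge q^*\alpha$. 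Since $\iota(x)=(x,1/2)$ kills the exponential factor, $\iota^*q^*=\id$ on the nose, which already yields surjectivity on cohomology.

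For the converse, I would build an explicit cochain homotopy $K\colon\Omega^k(\mca{F}|_{U_1};W)\to\Omega^{k-1}(\mca{F}|_{U_1};W)$ between $\id$ and $q^*\iota^*$. Decomposing $\xi\in\Omega^k(\mca{F}|_{U_1};W)$ uniquely as $\xi=\alpha+\beta\wedge\theta$ with $\alpha,\beta$ families of $W$-valued forms on $\mca{G}$-directions parameterized by $t$, I would set
\[
(K\xi)(x,t)=\int_{1/2}^t e^{-(t-s)\pi(T)}\beta(x,s)\,ds,
\]
a $(k-1)$-form in the $\mca{G}$-directions. A routine computation — integration by parts in $s$ converts the $\pi(T)\beta$ term into the boundary contribution at $s=1/2$ plus a $\partial_s\beta$ term that matches the $d_\mca{G}\beta\wedge\theta$ and $\partial_t\alpha\wedge\theta$ pieces of $D\xi$ — yields $DK+KD=\id-q^*\iota^*$. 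Hence $\iota^*$ is a cochain-homotopy equivalence and induces an isomorphism on every $H^k$, in particular on $H^1$.

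The main obstacle is purely bookkeeping: tracking signs generated by $\theta\wedge(-)$ and propagating the parallel-transport factor $e^{-(t-s)\pi(T)}$ through the homotopy identity. Conceptually, the argument is simply a twisted Poincaré lemma on the interval $(0,1)$ with flat coefficient bundle defined by the connection form $\pi(T)\,dt$, tensored with the $\mca{G}$-direction complex; the vanishing hypothesis $\pi|_V=0$ is exactly what makes this decoupling possible.
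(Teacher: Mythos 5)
Your argument is correct, but it is organized quite differently from the paper's. The paper proves injectivity and surjectivity separately by brute force: it solves the linear ODE $\partial_t+\pi(T)$ along the $T$--direction with initial data at $t=\tfrac{1}{2}$ to produce the primitive (resp.\ the extension $\eta$ of a class $\xi$ on $\mca{G}$), and then verifies the remaining identities $\eta(X)-X\alpha=0$ (resp.\ $\df\eta+\pi\omega_0\wedge\eta=0$) by an ODE--uniqueness argument carried out block by block through the real Jordan normal form of $\ad^0T$. You instead package the same parallel transport into a cochain--homotopy equivalence: the twisted extension $q^*=e^{-(t-1/2)\pi(T)}p^*$ together with the homotopy $K$ obtained by integrating the $\theta$--component against the kernel $e^{-(t-s)\pi(T)}$. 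This buys you the isomorphism in all degrees at once, eliminates the Jordan--form case analysis entirely, and isolates the conceptual point (a twisted Poincar\'e lemma in the interval direction, decoupled from the $\mca{G}$--complex because $\pi|_V=0$). The one place where care is genuinely needed is the verification that $Dq^*=q^*d_{\mca{G}}$: the identification of the $\mca{G}$--directions with $V$ via $\rho_0$ is $t$--dependent (the vector field of $X\in V$ at $(x,t)$ is $\Phi_t(X)$), which is exactly why the paper's footnote warns that the bare pullback $p^*$ is \emph{not} a cochain map; your exponential twist is precisely what cancels the resulting $\pi(T)$--term, and evaluating against $t$--independent torus vector fields (so that the bracket terms with $T$ vanish) makes the computation go through. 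With the signs in $DK+KD=\id-q^*\iota^*$ tracked, your proof is complete and, in my view, cleaner than the one in the paper.
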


\begin{proof}
We first show injectivity: 

\noindent
Take $[\eta]\in H^1\left(\mca{F}|_{U_1};\pi\right)$ satisfying $\iota^*[\eta]=0$. We must show the existence of a smooth $\alpha\colon U_1\to W$ such that 
\begin{equation}
\eta=\df\alpha+\pi\omega_0\alpha. \label{goal}
\end{equation}
Such $\alpha$ must satisfy $\eta(T)=T\alpha+\pi(T)\alpha$, where $T$ is regarded as a vector field using the action. Note that $T$ is just the vector field $\frac{\partial}{\partial t}$, where $(x,t)\in U_1$. We have a smooth map $h\colon\bb{T}^n\to W$ satisfying $\iota^*\eta=d_\mca{G}h$. Using this we let 
\begin{equation*}
\alpha(x,t)=e^{-\left(t-\frac{1}{2}\right)\pi(T)}\left(\int^t_\frac{1}{2}e^{\left(s-\frac{1}{2}\right)\pi(T)}\eta(T)(x,s)ds+h(x)\right)
\end{equation*}
for $x\in\bb{T}^n$ and $t\in(0,1)$. Then this $\alpha$ satisfies $\frac{\partial\alpha}{\partial t}(x,t)=-\pi(T)\alpha(x,t)+\eta(T)(x,t)$, ie $\eta(T)=T\alpha+\pi(T)\alpha$ and $\alpha\left(x,\frac{1}{2}\right)=h(x)$. To show (\ref{goal}) we have to prove 
\begin{equation*}
\eta(X)-X\alpha=0
\end{equation*}
for all $X\in V$. Note that 
\begin{equation}
\left(\eta(X)-X\alpha\right)\left(x,\frac{1}{2}\right)=0 \label{initial}
\end{equation}
for $X\in V$. Since 
\begin{equation*}
0=\left(\df\eta+\pi\omega_0\wedge\eta\right)(T,X)=T\eta(X)-X\eta(T)-\eta\left([T,X]\right)+\pi(T)\eta(X), 
\end{equation*}
we have 
\begin{align}
T\left(\eta(X)-X\alpha\right)&=X\eta(T)+\eta\left([T,X]\right)-\pi(T)\eta(X)-X\left(\eta(T)-\pi(T)\alpha\right)-[T,X]\alpha \notag \\
&=-\pi(T)\left(\eta(X)-X\alpha\right)+\eta\left([T,X]\right)-[T,X]\alpha. \label{eq}
\end{align}
Choose a basis of $V$ which turns $\ad^0T$ into a real Jordan normal form 
\begin{equation*}
\bigoplus
\begin{pmatrix}
a&1&&\\
&\ddots&\ddots&\\
&&\ddots&1\\
&&&a
\end{pmatrix}
\oplus\bigoplus
\begin{pmatrix}
b&-c&1&&&&&\\
c&b&&1&&&&\\
&&\ddots&&\ddots&&&\\
&&&\ddots&&\ddots&&\\
&&&&\ddots&&1&\\
&&&&&\ddots&&1\\
&&&&&&b&-c\\
&&&&&&c&b
\end{pmatrix}, 
\end{equation*}
where $a,b,c\in\bb{R}$, $c\neq 0$. 
Let $X_1,\ldots,X_k$ be the basis of the subspace corresponding to the above 
\begin{equation*}
\begin{pmatrix}
a&1&&\\
&\ddots&\ddots&\\
&&\ddots&1\\
&&&a
\end{pmatrix}, 
\end{equation*}
so that relations 
\begin{align*}
[T,X_1]&=aX_1\\
[T,X_2]&=aX_2+X_1\\
&\vdots\\
[T,X_k]&=aX_k+X_{k-1}
\end{align*}
hold. By (\ref{eq}), $T\left(\eta(X_1)-X_1\alpha\right)=\left(a-\pi(T)\right)\left(\eta(X_1)-X_1\alpha\right)$. The solution is 
\begin{equation*}
\left(\eta(X_1)-X_1\alpha\right)(x,t)=e^{\left(t-\frac{1}{2}\right)\left(a-\pi(T)\right)}\left(\eta(X_1)-X_1\alpha\right)\left(x,\frac{1}{2}\right). 
\end{equation*}
So we get $\eta(X_1)-X_1\alpha=0$ by (\ref{initial}). For $X_2$ we also have $T\left(\eta(X_2)-X_2\alpha\right)=\left(a-\pi(T)\right)\left(\eta(X_2)-X_2\alpha\right)$, so that $\eta(X_2)-X_2\alpha=0$. Repeating this we get $\eta(X_i)-X_i\alpha=0$ for $i=1,\ldots,k$. 

Let $X_1,Y_1,\ldots,X_k,Y_k$ be the basis of the subspace corresponding to 
\begin{equation*}
\begin{pmatrix}
b&-c&1&&&&&\\
c&b&&1&&&&\\
&&\ddots&&\ddots&&&\\
&&&\ddots&&\ddots&&\\
&&&&\ddots&&1&\\
&&&&&\ddots&&1\\
&&&&&&b&-c\\
&&&&&&c&b
\end{pmatrix}. 
\end{equation*}
We proceed similarly. The relations are 
\begin{align*}
[T,X_1]&=bX_1+cY_1,&[T,Y_1]&=-cX_1+bY_1,\\
[T,X_2]&=bX_2+cY_2+X_1,&[T,Y_2]&=-cX_2+bY_2+Y_1,\\
&\ \ \vdots&&\ \ \vdots
\end{align*}
The first equation to solve is 
\begin{equation*}
T
\begin{pmatrix}
\eta(X_1)-X_1\alpha\\
\eta(Y_1)-Y_1\alpha
\end{pmatrix}
=
\begin{pmatrix}
b-\pi(T)&c\\
-c&b-\pi(T)
\end{pmatrix}
\begin{pmatrix}
\eta(X_1)-X_1\alpha\\
\eta(Y_1)-Y_1\alpha
\end{pmatrix}, 
\end{equation*}
and the solution is 
\begin{equation*}
\begin{pmatrix}
\eta(X_1)-X_1\alpha\\
\eta(Y_1)-Y_1\alpha
\end{pmatrix}
(x,t)=e^{\left(t-\frac{1}{2}\right)
\begin{pmatrix}
b-\pi(T)&c\\
-c&b-\pi(T)
\end{pmatrix}
}
\begin{pmatrix}
\eta(X_1)-X_1\alpha\\
\eta(Y_1)-Y_1\alpha
\end{pmatrix}
\left(x,\frac{1}{2}\right). 
\end{equation*}
So we have $\eta(X_1)-X_1\alpha=0$ and $\eta(Y_1)-Y_1\alpha=0$. Repeating, we eventually get $\eta(X_i)-X_i\alpha=0$ and $\eta(Y_i)-Y_i\alpha=0$ for $i=1,\ldots,k$. This proves the injectivity. 

\vspace{2mm}
\noindent
We now prove surjectivity: 

\noindent
Take any $[\xi]\in H^1(\mca{G};W)$. We must construct $\eta\in\Omega^1\left(\mca{F}|_{U_1};W\right)$ satisfying 
\begin{equation}
\df\eta+\pi\omega_0\wedge\eta=0 \label{apple}
\end{equation}
and $\iota^*\eta=\xi$. We will construct $\eta$ requiring the additional property $\eta(T)=0$. In  order that equation (\ref{apple}) is satisfied, for any $X\in V$ we should construct $\eta(X)$ such that 
\begin{equation}
T\eta(X)-\eta\left([T,X]\right)+\pi(T)\eta(X)=0 \label{orange}
\end{equation}
holds. Fix a basis $X_1,\ldots,X_p$ of $V$ and let $(a_{ij})$ be the matrix representing $\ad^0T$ with respect to $X_1,\ldots,X_p$; $[T,X_j]=\sum^p_{i=1}a_{ij}X_i$. The $\eta(X_j)$ should satisfy $T\eta(X_j)-\sum^p_{i=1}a_{ij}\eta(X_i)+\pi(T)\eta(X_j)=0$ and $\eta(X_j)\left(x,\frac{1}{2}\right)=\xi\left(X^0_j\right)$. Here, for $X\in V$, $X^0$ denotes the section of $T\mca{G}$ satisfying $\iota_*X^0=X|_{t=\frac{1}{2}}$. So we must solve 
\begin{equation*}
T
\begin{pmatrix}
\eta(X_1)\\
\vdots\\
\eta(X_p)
\end{pmatrix}
=\left((a_{ji})-
\begin{pmatrix}
\pi(T)&&\\
&\ddots&\\
&&\pi(T)
\end{pmatrix}
\right)
\begin{pmatrix}
\eta(X_1)\\
\vdots\\
\eta(X_p)
\end{pmatrix}. 
\end{equation*}
The solution is 
\begin{equation*}
\begin{pmatrix}
\eta(X_1)\\
\vdots\\
\eta(X_p)
\end{pmatrix}
(x,t)=\exp\left\{\left(t-\frac{1}{2}\right)\left((a_{ji})-
\begin{pmatrix}
\pi(T)&&\\
&\ddots&\\
&&\pi(T)
\end{pmatrix}
\right)\right\}
\begin{pmatrix}
\xi\left(X^0_1\right)\\
\vdots\\
\xi\left(X^0_p\right)
\end{pmatrix}
(x). 
\end{equation*}
We define $\eta(X_i)$ by this. Then $\eta(X)$ for any $X\in V$ is defined by linearity, so that we have defined $\eta\in\Omega^1\left(\mca{F}|_{U_1};W\right)$. Then (\ref{orange}) and $\iota^*\eta=\xi$ are satisfied. To see that (\ref{apple}) holds, we only have to show $\left(\df\eta+\pi\omega_0\wedge\eta\right)(X,Y)=0$ for all $X, Y\in V$. Put $\theta=\df\eta+\pi\omega_0\wedge\eta$. Then
\begin{align*}
T\left(\theta(X,Y)\right)&=TX\eta(Y)-TY\eta(X)\\
&=X\left(\eta\left([T,Y]\right)-\pi(T)\eta(Y)\right)+[T,X]\eta(Y)\\
&\ \ \ \ \ \ \ -Y\left(\eta\left([T,X]\right)-\pi(T)\eta(X)\right)-[T,Y]\eta(X)\\
&=-\pi(T)\theta(X,Y)+\theta\left(X,[T,Y]\right)+\theta\left([T,X],Y\right)
\end{align*}
and $\theta(X,Y)\left(x,\frac{1}{2}\right)=X^0\xi\left(Y^0\right)-Y^0\xi\left(X^0\right)=d_\mca{G}\xi\left(X^0,Y^0\right)=0$. As in the proof of injectivity we can show $\theta(X_i,X_j)=0$, so that (\ref{apple}) is satisfied. This completes the proof. 
\end{proof}

By the assumption that $V$ is a Diophantine subspace of $\bb{R}^n$, the leafwise cohomology $H^1(\mca{G};W)$ vanishes, ie $H^1(\mca{G};W)=H^1(V;W)=V^*\otimes W$. Thus $H^1\left(\mca{F}|_{U_1};\pi\right)\simeq V^*\otimes W$. Let $q\colon\mf{s}\to V$ be the natural projection. 

\begin{lem}
An element $\varphi\otimes w\in V^*\otimes W$ corresponds to 
\begin{equation*}
\left(e^{\left(t-\frac{1}{2}\right)\left(\ad^0T\right)^*}\varphi\right)q\omega_0\otimes e^{-\left(t-\frac{1}{2}\right)\pi(T)}w\in\Omega^1\left(\mca{F}|_{U_1};W\right)
\end{equation*}
by the above isomorphism, where $\left(\ad^0T\right)^*\colon V^*\to V^*$ is the adjoint of $\ad^0T$. 
\end{lem}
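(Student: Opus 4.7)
The plan is to verify directly that the displayed $1$--form is the image of $\varphi\otimes w$ under the inverse of the isomorphism in Lemma~\ref{iso}. By the Diophantine hypothesis, under $H^1(\mca{G};W)\simeq V^*\otimes W$ the class of $\varphi\otimes w$ is represented by the constant (in $x$) leafwise closed $1$--form $\xi$ on $(\bb{T}^n,\mca{G})$ with $\xi(X^0)=\varphi(X)w$ for $X\in V$. Recall that in the surjectivity proof of Lemma~\ref{iso}, $\eta\in\Omega^1(\mca{F}|_{U_1};W)$ was determined uniquely by the three conditions $\eta(T)=0$, $\iota^*\eta=\xi$, and the fiber ODE
\begin{equation*}
T\eta(X)+\pi(T)\eta(X)=\eta([T,X])\quad\text{for }X\in V,
\end{equation*}
the remaining cocycle equations for pairs of vectors in $V$ being automatic.

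Write the candidate as $\eta=\psi(t)\,q\omega_0\otimes u(t)$, where $\psi(t)=e^{(t-\frac{1}{2})(\ad^0T)^*}\varphi\in V^*$ and $u(t)=e^{-(t-\frac{1}{2})\pi(T)}w\in W$. First, since $q(T)=0$, we have $\eta(T)=\psi(t)(q\omega_0(T))u(t)=0$. Second, at $t=\tfrac{1}{2}$ both exponentials reduce to the identity, so $\eta(X)|_{t=1/2}=\varphi(X)w=\xi(X^0)$ for every $X\in V$, giving $\iota^*\eta=\xi$. Third, for $X\in V$ we differentiate in $t$ using $\psi'(t)=(\ad^0T)^*\psi(t)$ and $u'(t)=-\pi(T)u(t)$:
\begin{align*}
T\eta(X)&=\psi'(t)(X)\,u(t)+\psi(t)(X)\,u'(t)\\
&=\psi(t)\bigl(\ad^0T\cdot X\bigr)u(t)-\pi(T)\,\psi(t)(X)u(t)\\
&=\eta([T,X])-\pi(T)\eta(X),
\end{align*}
where we have used $\ad^0T\cdot X=[T,X]$ for $X\in V$. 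This is exactly the required ODE.

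By the uniqueness part of the construction in the proof of Lemma~\ref{iso} (a first-order linear ODE with prescribed initial value), the candidate coincides with the $\eta$ associated to $\xi$, which proves the formula. There is no real obstacle: the lemma simply records the closed-form solution of the ODE appearing in the surjectivity argument, and the form of the solution is dictated exactly by the infinitesimal operators $(\ad^0T)^*$ on $V^*$ and $-\pi(T)$ on $W$ that appear on the right-hand side of that ODE.
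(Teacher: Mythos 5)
Your proof is correct and follows essentially the same route as the paper: plug in the candidate form, check $\eta(T)=0$, $\iota^*\eta=\varphi\otimes w$, and the fiber ODE $T\eta(X)=\eta([T,X])-\pi(T)\eta(X)$, then identify it with the form produced in the surjectivity argument of Lemma \ref{iso}. The only difference is that you spell out the differentiation and the uniqueness appeal in more detail than the paper does.
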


\begin{proof}
Put $\eta=\left(e^{\left(t-\frac{1}{2}\right)\left(\ad^0T\right)^*}\varphi\right)q\omega_0\otimes e^{-\left(t-\frac{1}{2}\right)\pi(T)}w$. Then $\iota^*\eta=\varphi\otimes w$ and $\eta(T)=0$. For $X\in V$, $\eta(X)=\left(e^{\left(t-\frac{1}{2}\right)\left(\ad^0T\right)^*}\varphi\right)(X)e^{-\left(t-\frac{1}{2}\right)\pi(T)}w$ satisfies $T\eta(X)=\eta\left([T,X]\right)-\pi(T)\eta(X)$. 
So we see $\df\eta+\pi\omega_0\wedge\eta=0$ as in the proof of Lemma \ref{iso}.  
\end{proof}

$U_1\cap U_2$ is the disjoint union of the projections to $M$ of $\bb{T}^n\times\left(0,\frac{1}{2}\right)$ and $\bb{T}^n\times\left(\frac{1}{2},1\right)$. We define the maps 
\begin{align*}
\iota_\frac{1}{2}&\colon\bb{T}^n\hookrightarrow U_1,&\iota_\frac{1}{2}(x)&=\left(x,\frac{1}{2}\right),\\
\iota_0&\colon\bb{T}^n\hookrightarrow U_2,&\iota_0(x)&=(x,0),\\
\iota_\frac{1}{4}&\colon\bb{T}^n\hookrightarrow U_1\cap U_2,&\iota_\frac{1}{4}(x)&=\left(x,\frac{1}{4}\right),\\
\iota_\frac{3}{4}&\colon\bb{T}^n\hookrightarrow U_1\cap U_2,&\iota_\frac{3}{4}(x)&=\left(x,\frac{3}{4}\right). 
\end{align*}
We will calculate the bottom map of the next commutative diagram in which the vertical maps are isomorphisms: 
\begin{equation*}
\begin{tikzcd}
H^1\left(\mca{F}|_{U_1};\pi\right)\oplus H^1\left(\mca{F}|_{U_2};\pi\right)\ar[r,"Q"]\ar[d,"\iota_\frac{1}{2}^*\oplus\iota_0^*"']\ar[dr,phantom,"\circlearrowright" marking]&H^1\left(\mca{F}|_{U_1\cap U_2};\pi\right)\ar[d,"\iota_\frac{1}{4}^*\oplus\iota_\frac{3}{4}^*"]\\
(V^*\otimes W)\oplus(V^*\otimes W)\ar[r]&(V^*\otimes W)\oplus(V^*\otimes W). 
\end{tikzcd}
\end{equation*}

\begin{lem}
The bottom map of the above diagram is 
\begin{equation*}
\begin{pmatrix}
-e^{-\frac{1}{4}(\ad^0T)^*}\otimes e^{\frac{1}{4}\pi(T)}&e^{\frac{1}{4}(\ad^0T)^*}\otimes e^{-\frac{1}{4}\pi(T)}\\
-e^{\frac{1}{4}(\ad^0T)^*}\otimes e^{-\frac{1}{4}\pi(T)}&e^{-\frac{1}{4}(\ad^0T)^*}\otimes e^{\frac{1}{4}\pi(T)}
\end{pmatrix}. 
\end{equation*}
\end{lem}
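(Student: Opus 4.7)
The plan is to reduce the assertion to an explicit substitution at the level of representatives. By the previous lemma, the element $\varphi_1 \otimes w_1$ in the first summand is the image under $\iota_{1/2}^*$ of the class of $\eta_1 = (e^{(t-1/2)(\ad^0T)^*}\varphi_1)\,q\omega_0 \otimes e^{-(t-1/2)\pi(T)}w_1 \in \Omega^1(\mca{F}|_{U_1};W)$, where $t \in (0,1)$ parametrizes $U_1$. The identical argument applied to $U_2$ (parametrized by $s \in (-1/2, 1/2)$) with anchor $s = 0$ shows that $\varphi_2 \otimes w_2$ is represented by $\eta_2 = (e^{s(\ad^0T)^*}\varphi_2)\,q\omega_0 \otimes e^{-s\pi(T)}w_2$; and replacing $t - 1/2$ by $t - t_0$ realizes the inverse of the analogous $\iota_{t_0}^*$ for any other anchor $t_0$. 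The Mayer--Vietoris map $Q$ sends $(\eta_1,\eta_2)$ to $\eta_2 - \eta_1$ on $U_1 \cap U_2$, and the task is to identify its pullbacks along $\iota_{1/4}$ over the component $C_1$ (the projection of $\bb{T}^n \times (0, 1/2)$) and along $\iota_{3/4}$ over $C_2$ (the projection of $\bb{T}^n \times (1/2, 1)$), using the anchor-$1/4$ and anchor-$3/4$ representative formulas.

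On $C_1$ the $U_1$- and $U_2$-coordinates coincide, so the computation is purely algebraic. Rewriting $t - 1/2 = (t - 1/4) - 1/4$ and $s = (t - 1/4) + 1/4$, the restrictions $\eta_1|_{C_1}$ and $\eta_2|_{C_1}$ become the anchor-$1/4$ representatives of $e^{-1/4(\ad^0T)^*}\varphi_1 \otimes e^{1/4\pi(T)}w_1$ and $e^{1/4(\ad^0T)^*}\varphi_2 \otimes e^{-1/4\pi(T)}w_2$, respectively. Subtracting yields the first row of the matrix.

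On $C_2$ the same rewriting applied directly to $\eta_1$ about anchor $3/4$ identifies $\iota_{3/4}^*\eta_1$ with $e^{1/4(\ad^0T)^*}\varphi_1 \otimes e^{-1/4\pi(T)}w_1$. The subtlety, and the main obstacle, lies with $\eta_2|_{C_2}$: the $\bb{Z}$-action identifies the $U_1$-point $(x,t)$ with $t \in (1/2,1)$ to the $U_2$-point $(A^{-1}x, t-1)$, so a leafwise tangent vector $Y \in V$ at $(x,t)$ pushes to $A^{-1}Y$ at $(A^{-1}x, t-1)$. Substituting into $\eta_2$ produces the factor $(e^{(t-1)(\ad^0T)^*}\varphi_2)(\Phi_{1-t}(A^{-1}Y))\cdot e^{(1-t)\pi(T)}w_2$, whose $\Phi_{1-t}\circ A^{-1}$ term would a priori spoil the required shape. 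The crucial input is the defining relation $A|_V = \Phi_1$ from the setup of the theorem: on $V$ we have $\Phi_{1-t}\circ A^{-1} = \Phi_{1-t}\circ\Phi_{-1} = \Phi_{-t}$, which is exactly $q\omega_0|_{(x,t)}$ in the $U_1$-trivialization. Once this cancellation is in place, rewriting the remaining exponents as $t - 1 = (t - 3/4) - 1/4$ both in $(\ad^0T)^*$ and in $\pi(T)$ puts $\eta_2|_{C_2}$ into the anchor-$3/4$ representative form of $e^{-1/4(\ad^0T)^*}\varphi_2 \otimes e^{1/4\pi(T)}w_2$, and subtracting yields the second row.
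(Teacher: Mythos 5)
Your proposal is correct and follows essentially the same route as the paper: represent $(\varphi_1\otimes w_1,\varphi_2\otimes w_2)$ by the anchored closed forms, apply $Q$, and re-anchor at $t=\frac{1}{4}$ and $t=\frac{3}{4}$, with the gluing map $F(x,t)=(A^{-1}x,t-1)$ handling the second component of $U_1\cap U_2$. The only difference is that you spell out why $F^*(q\omega_0)=q\omega_0$ via $\Phi_{1-t}\circ A^{-1}=\Phi_{-t}$, a step the paper's computation of $F^*$ leaves implicit.
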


\begin{proof}
We define $F\colon\bb{T}^n\times\bb{R}\to\bb{T}^n\times\bb{R}$ by $F(x,t)=\left(A^{-1}x,t-1\right)$. Take an element $\left(\varphi_1\otimes w_1,\varphi_2\otimes w_2\right)$ of $(V^*\otimes W)\oplus (V^*\otimes W)$. This is mapped by the vertical map to 
\begin{equation*}
\left(\left(e^{\left(t-\frac{1}{2}\right)\left(\ad^0T\right)^*}\varphi_1\right)q\omega_0\otimes e^{-\left(t-\frac{1}{2}\right)\pi(T)}w_1,\left(e^{t\left(\ad^0T\right)^*}\varphi_2\right)q\omega_0\otimes e^{-t\pi(T)}w_2\right), 
\end{equation*}
which is in turn mapped by $Q$ to 
\begin{multline*}
\left(\left(e^{t\left(\ad^0T\right)^*}\varphi_2\right)q\omega_0\otimes e^{-t\pi(T)}w_2-\left(e^{\left(t-\frac{1}{2}\right)\left(\ad^0T\right)^*}\varphi_1\right)q\omega_0\otimes e^{-\left(t-\frac{1}{2}\right)\pi(T)}w_1,\right.\\
\left.F^*\left(\left(e^{t\left(\ad^0T\right)^*}\varphi_2\right)q\omega_0\otimes e^{-t\pi(T)}w_2\right)-\left(e^{\left(t-\frac{1}{2}\right)\left(\ad^0T\right)^*}\varphi_1\right)q\omega_0\otimes e^{-\left(t-\frac{1}{2}\right)\pi(T)}w_1\right). 
\end{multline*}
Since 
\begin{equation*}
F^*\left(\left(e^{t\left(\ad^0T\right)^*}\varphi_2\right)q\omega_0\otimes e^{-t\pi(T)}w_2\right)=\left(e^{(t-1)\left(\ad^0T\right)^*}\varphi_2\right)q\omega_0\otimes e^{-(t-1)\pi(T)}w_2, 
\end{equation*}
the above element equals to 
\begin{multline*}
\left(\left(e^{\left(t-\frac{1}{4}\right)\left(\ad^0T\right)^*}e^{\frac{1}{4}\left(\ad^0T\right)^*}\varphi_2\right)q\omega_0\otimes e^{-\left(t-\frac{1}{4}\right)\pi(T)}e^{-\frac{1}{4}\pi(T)}w_2\right.\\
-\left(e^{\left(t-\frac{1}{4}\right)\left(\ad^0T\right)^*}e^{-\frac{1}{4}\left(\ad^0T\right)^*}\varphi_1\right)q\omega_0\otimes e^{-\left(t-\frac{1}{4}\right)\pi(T)}e^{\frac{1}{4}\pi(T)}w_1, \\
\left(e^{\left(t-\frac{3}{4}\right)\left(\ad^0T\right)^*}e^{-\frac{1}{4}\left(\ad^0T\right)^*}\varphi_2\right)q\omega_0\otimes e^{-\left(t-\frac{3}{4}\right)\pi(T)}e^{\frac{1}{4}\pi(T)}w_2\\
\left.-\left(e^{\left(t-\frac{3}{4}\right)\left(\ad^0T\right)^*}e^{\frac{1}{4}\left(\ad^0T\right)^*}\varphi_1\right)q\omega_0\otimes e^{-\left(t-\frac{3}{4}\right)\pi(T)}e^{-\frac{1}{4}\pi(T)}w_1
\right)
\end{multline*}
and finally this is mapped by the vertical arrow to 
\begin{multline*}
\left(e^{\frac{1}{4}\left(\ad^0T\right)^*}\varphi_2\otimes e^{-\frac{1}{4}\pi(T)}w_2-e^{-\frac{1}{4}\left(\ad^0T\right)^*}\varphi_1\otimes e^{\frac{1}{4}\pi(T)}w_1, \right.\\
\left.e^{-\frac{1}{4}\left(\ad^0T\right)^*}\varphi_2\otimes e^{\frac{1}{4}\pi(T)}w_2-e^{\frac{1}{4}\left(\ad^0T\right)^*}\varphi_1\otimes e^{-\frac{1}{4}\pi(T)}w_1\right). 
\end{multline*}
\end{proof}

By using the canonical isomorphism $V^*\otimes W\simeq\Hom(V,W)$, the map in the above lemma becomes a map 
\begin{equation*}
\bar{Q}\colon\Hom(V,W)\oplus\Hom(V,W)\to\Hom(V,W)\oplus\Hom(V,W)
\end{equation*}
which sends 
$\begin{pmatrix}
\alpha\\
\beta
\end{pmatrix}$
to 
\begin{equation*}
\begin{pmatrix}
-e^{\frac{1}{4}\pi(T)}\circ\alpha\circ e^{-\frac{1}{4}\ad^0T}+e^{-\frac{1}{4}\pi(T)}\circ\beta\circ e^{\frac{1}{4}\ad^0T}\\
-e^{-\frac{1}{4}\pi(T)}\circ\alpha\circ e^{\frac{1}{4}\ad^0T}+e^{\frac{1}{4}\pi(T)}\circ\beta\circ e^{-\frac{1}{4}\ad^0T}
\end{pmatrix}. 
\end{equation*}

\begin{lem}\label{kernel then}
If 
$\begin{pmatrix}
\alpha\\
\beta
\end{pmatrix}$ 
is in the kernel of $\bar{Q}$, then $\alpha$ and $\beta$ satisfy $e^{\pi(T)}\circ\alpha=\alpha\circ e^{\ad^0T}$ and $e^{\pi(T)}\circ\beta=\beta\circ e^{\ad^0T}$. 
\end{lem}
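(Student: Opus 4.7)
The plan is to directly manipulate the two scalar equations coming from the kernel condition. Writing out $\bar{Q}\begin{pmatrix}\alpha\\\beta\end{pmatrix}=0$ gives the pair
\begin{align*}
e^{\frac{1}{4}\pi(T)}\circ\alpha\circ e^{-\frac{1}{4}\ad^0 T}&=e^{-\frac{1}{4}\pi(T)}\circ\beta\circ e^{\frac{1}{4}\ad^0 T},\\
e^{-\frac{1}{4}\pi(T)}\circ\alpha\circ e^{\frac{1}{4}\ad^0 T}&=e^{\frac{1}{4}\pi(T)}\circ\beta\circ e^{-\frac{1}{4}\ad^0 T}.
\end{align*}

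First I would clear the exponentials on each equation by composing on the left with $e^{\frac{1}{4}\pi(T)}$ (or $e^{-\frac{1}{4}\pi(T)}$) and on the right with $e^{\frac{1}{4}\ad^0 T}$ (or $e^{-\frac{1}{4}\ad^0 T}$), using that the exponentials of a single linear operator commute with each other. The first equation becomes
\begin{equation*}
e^{\frac{1}{2}\pi(T)}\circ\alpha=\beta\circ e^{\frac{1}{2}\ad^0 T},
\end{equation*}
and the second becomes
\begin{equation*}
\alpha\circ e^{\frac{1}{2}\ad^0 T}=e^{\frac{1}{2}\pi(T)}\circ\beta.
\end{equation*}

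Next I would eliminate $\beta$ by substituting the first relation into the second. From the first, $\beta=e^{\frac{1}{2}\pi(T)}\circ\alpha\circ e^{-\frac{1}{2}\ad^0 T}$, and plugging this into the second gives
\begin{equation*}
\alpha\circ e^{\frac{1}{2}\ad^0 T}=e^{\frac{1}{2}\pi(T)}\circ e^{\frac{1}{2}\pi(T)}\circ\alpha\circ e^{-\frac{1}{2}\ad^0 T}=e^{\pi(T)}\circ\alpha\circ e^{-\frac{1}{2}\ad^0 T},
\end{equation*}
and composing on the right with $e^{\frac{1}{2}\ad^0 T}$ yields $\alpha\circ e^{\ad^0 T}=e^{\pi(T)}\circ\alpha$. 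By the symmetric substitution (eliminating $\alpha$ from the second into the first), the analogous identity $\beta\circ e^{\ad^0 T}=e^{\pi(T)}\circ\beta$ follows.

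There is no real obstacle here; the only thing to be careful about is that the shifts by $\pm\tfrac14$ on the two sides of each equation are used in a complementary way, so combining the two kernel equations essentially doubles them up into the desired intertwining relation at time $1$. The two relations $\alpha\circ e^{\ad^0 T}=e^{\pi(T)}\circ\alpha$ and $\beta\circ e^{\ad^0 T}=e^{\pi(T)}\circ\beta$ are exactly the claim.
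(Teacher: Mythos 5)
Your proof is correct and is essentially the same as the paper's: the paper also rewrites the two kernel equations as $-e^{\frac{1}{2}\pi(T)}\circ\alpha+\beta\circ e^{\frac{1}{2}\ad^0T}=0$ and $-\alpha\circ e^{\frac{1}{2}\ad^0T}+e^{\frac{1}{2}\pi(T)}\circ\beta=0$ and then eliminates $\beta$ (resp.\ $\alpha$). You have merely written out the elimination step that the paper leaves implicit.
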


\begin{proof}
The pair 
$\begin{pmatrix}
\alpha\\
\beta
\end{pmatrix}$ 
is in the kernel if and only if $-e^{\frac{1}{2}\pi(T)}\circ\alpha+\beta\circ e^{\frac{1}{2}\ad^0T}=0$ and $-\alpha\circ e^{\frac{1}{2}\ad^0T}+e^{\frac{1}{2}\pi(T)}\circ\beta=0$. By eliminating $\beta$ or $\alpha$ we get the conclusion. 
\end{proof}

Next we will calculate the map $H^0\left(\mca{F}|_{U_1};\pi\right)\oplus H^0\left(\mca{F}|_{U_2};\pi\right)\stackrel{P}{\to} H^0\left(\mca{F}|_{U_1\cap U_2};\pi\right)$. The group $H^0\left(\mca{F}|_{U_1};\pi\right)$ consists of all smooth functions $f\colon U_1\to W$ satisfying $\df f+\pi\omega_0\wedge f=0$, which is equivalent to equations $Tf+\pi(T)f=0$ and $Xf=0$ for all $X\in V$. Since the action $\bb{T}^n\curvearrowleft V$ has a dense orbit, such $f$ must be constant along the directions of tori; write $f(x,t)=f(t)$. Solving the differential equation, we get $f(t)=e^{-\left(t-\frac{1}{2}\right)\pi(T)}f\left(\frac{1}{2}\right)$. Thus we have an isomorphism $H^0\left(\mca{F}|_{U_1};\pi\right)\simeq W$ which sends $f$ to $f\left(\frac{1}{2}\right)$. The bottom arrow of the diagram 
\begin{equation*}
\begin{tikzcd}
H^0\left(\mca{F}|_{U_1};\pi\right)\oplus H^0\left(\mca{F}|_{U_2};\pi\right)\ar[r,"P"]\ar[d,"\iota_\frac{1}{2}^*\oplus\iota_0^*"']\ar[dr,phantom,"\circlearrowright" marking]&H^0\left(\mca{F}|_{U_1\cap U_2};\pi\right)\ar[d,"\iota_\frac{1}{4}^*\oplus\iota_\frac{3}{4}^*"]\\
W\oplus W\ar[r,"\bar{P}"']&W\oplus W
\end{tikzcd}
\end{equation*}
sends an element $(w_1,w_2)$ as follows: 
\begin{align*}
(w_1,w_2)&\mapsto\left(e^{-\left(t-\frac{1}{2}\right)\pi(T)}w_1,e^{-t\pi(T)}w_2\right)\\
&\mapsto\left(e^{-t\pi(T)}w_2-e^{-\left(t-\frac{1}{2}\right)\pi(T)}w_1,F^*\left(e^{-t\pi(T)}w_2\right)-e^{-\left(t-\frac{1}{2}\right)\pi(T)}w_1\right)\\
&=\left(e^{-t\pi(T)}w_2-e^{-\left(t-\frac{1}{2}\right)\pi(T)}w_1,e^{-(t-1)\pi(T)}w_2-e^{-\left(t-\frac{1}{2}\right)\pi(T)}w_1\right)\\
&\mapsto\left(e^{-\frac{1}{4}\pi(T)}w_2-e^{\frac{1}{4}\pi(T)}w_1,e^{\frac{1}{4}\pi(T)}w_2-e^{-\frac{1}{4}\pi(T)}w_1\right). 
\end{align*}

An element $(w_1,w_2)$ is in $\ker\bar{P}$ if and only if $w_2=e^{\frac{1}{2}\pi(T)}w_1$ and $w_1=e^{\frac{1}{2}\pi(T)}w_2$. So 
\begin{equation}
\ker\bar{P}\simeq\ker\left(\id-e^{\pi(T)}\right)\subset W \label{Pker}
\end{equation}
by $(w_1,w_2)\mapsto w_2$. 

Finally we calculate the cohomology $H^1(\mf{s};\pi)$ of the Lie algebra. A 1--cocycle is a linear map $\varphi\colon\mf{s}\to W$ satisfying $\pi(T)\varphi(X)-\varphi\left([T,X]\right)=0$ for all $X\in V$.  So the space of 1--cocycles is isomorphic to $\Hom_T(V,W)\oplus W$ by the isomorphism which sends $\varphi$ to $\left(\varphi|_V,\varphi(T)\right)$, where $\Hom_T(V,W)$ denotes the space of $\left(\ad^0T,\pi(T)\right)$--equivariant linear maps from $V$ to $W$. On the other hand a 1--coboundary maps $Z\in\mf{s}$ to $\pi(Z)c\in W$ for some $c\in W$. So a linear map $\varphi\colon\mf{s}\to W$ is a 1--coboundary if and only if $\varphi(X)=0$ for all $X\in V$ and $\varphi(T)\in\im\pi(T)$.  The space of 1--coboundaries is isomorphic to $0\oplus\im\pi(T)$ by the above isomorphism. Therefore we have 
\begin{equation}
H^1(\mf{s};\pi)\simeq\Hom_T(V,W)\oplus\left(W/\im\pi(T)\right). \label{coh of Lie}
\end{equation}

\subsection{Vanishing of the cohomology}
As we saw in Section \ref{alsg} we must show vanishing of the following cohomologies: 
\begin{itemize}
\item[(a)] The trivial representation $\mf{s}\curvearrowright\bb{R}$. 
\item[(b)] The restriction of $\ad$ to $V$ and its negative, $\mf{s}\stackrel{\pm\ad^0}{\curvearrowright} V$. 
\end{itemize}

\noindent
(a)\ \ By (\ref{coh of Lie}) in last section, 
\begin{equation*}
H^1(\mf{s})\simeq\left\{\varphi\colon V\to\bb{R}\ \middle|\ \varphi\circ\ad^0T=0\right\}\oplus\bb{R}. 
\end{equation*}
By (\ref{Hcokker}) and (\ref{Pker}), $H^1(\mca{F})\simeq\bb{R}\oplus\ker\bar{Q}$. If $(\alpha,\beta)$ is in $\ker\bar{Q}$, by Lemma \ref{kernel then}, $\alpha=\alpha\circ\Phi_k$ for all $k\in\bb{Z}$. By the assumption of Zariski density, $\alpha=\alpha\circ\Phi_t$ for all $t\in\bb{R}$. Thus $\alpha\circ\ad^0T=0$ by differentiation. The same thing holds for $\beta$ and then we have $\alpha=\beta$. So 
\begin{equation*}
\ker\bar{Q}\simeq\left\{\varphi\colon V\to\bb{R}\ \middle|\ \varphi\circ\ad^0T=0\right\}
\end{equation*}
and $H^1(\mca{F})=H^1(\mf{s})$. 

\vspace{2mm}
\noindent 
b)\ \ By (\ref{coh of Lie}), 
\begin{equation*}
H^1\left(\mf{s};\pm\ad^0\right)\simeq\left\{\varphi\colon V\to V\ \middle|\ \varphi\circ\ad^0T=\pm\ad^0T\circ\varphi\right\}. 
\end{equation*}
By (\ref{Hcokker}) and (\ref{Pker}), $H^1\left(\mca{F};\pm\ad^0\right)\simeq\ker\bar{Q}$. If $(\alpha,\beta)$ is in $\ker\bar{Q}$, by Lemma \ref{kernel then}, $\Phi_{\pm k}\circ\alpha=\alpha\circ\Phi_k$ for all $k\in\bb{Z}$. By the assumption of Zariski density, $\Phi_{\pm t}\circ\alpha=\alpha\circ\Phi_t$ for all $t\in\bb{R}$. Thus $\pm\ad^0T\circ\alpha=\alpha\circ\ad^0T$ by differentiation. Note that here we use the benefit of large scale geometry. Since the map 
\begin{equation*}
\ker\bar{Q}\to\left\{\varphi\colon V\to V\ \middle|\ \varphi\circ\ad^0T=\pm\ad^0T\circ\varphi\right\}
\end{equation*}
mapping $(\alpha,\beta)$ to $\alpha$ is injective, we have $H^1\left(\mca{F};\pm\ad^0\right)=H^1\left(\mf{s};\pm\ad^0\right)$. 

This completes the proof of Theorem \ref{mapping torus}.

\section{Parameter rigidity of transitive locally free actions and rigidity of lattices}\label{rel}

\subsection{Relations between transitive locally free actions and lattices} 
Let $S$ be a connected simply connected solvable Lie group, $\Gamma_0$ be a cocompact lattice in $S$ and $M=\Gamma_0\backslash S\stackrel{\rho_0}{\curvearrowleft}S$ be the action by right multiplication. 

We put 
\begin{align*}
\mca{A}(\Gamma_0,S)&:=\left\{M\stackrel{\rho}{\curvearrowleft}S\ \middle|\ \text{$\rho$ is transitive locally free}\right\}\\
\mca{H}(\Gamma_0,S)&:=\left\{\alpha\colon\Gamma_0\to S\ \middle|\ \text{$\alpha$ is an injective homomorphism and $\alpha(\Gamma_0)$ is a lattice}\right\}. 
\end{align*}
Then 
\begin{equation*}
\mca{H}(\Gamma_0,S)/\Aut(\Gamma_0)=\left\{\Gamma\subset S\ \middle|\ \text{$\Gamma$ is a cocompact lattice isomorphic to $\Gamma_0$}\right\}. 
\end{equation*}

\begin{prop}
There is a one-to-one correspondence between 
\begin{equation*}
\mca{A}(\Gamma_0,S)/\left(\text{$C^\infty$--conjugacy $+$ $\Aut(S)$}\right)
\end{equation*}
and 
\begin{equation*}
\Aut(S)\backslash\mca{H}(\Gamma_0,S)/\Aut(\Gamma_0)
\end{equation*}
\begin{equation*}
=\Aut(S)\backslash\left\{\Gamma\subset S\ \middle|\ \text{$\Gamma$ is a cocompact lattice isomorphic to $\Gamma_0$}\right\}
\end{equation*}
taking $\rho$ to the isotropy subgroup of $\rho$ at the point $x_0=\Gamma_0$. 
\end{prop}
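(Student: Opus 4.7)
The plan is to send an action $\rho$ to its isotropy subgroup $\Gamma_\rho$ at $x_0$, reinterpreted as an injective homomorphism $\Gamma_0\to S$ after identifying $\Gamma_\rho$ with $\pi_1(M)\cong\Gamma_0$, and to invert by pulling back the right multiplication action along an equivariant diffeomorphism between two presentations of the solvmanifold.

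For the forward direction, given $\rho\in\mca{A}(\Gamma_0,S)$, set $\Gamma_\rho=\{s\in S\mid\rho(x_0,s)=x_0\}$. Local freeness forces $\Gamma_\rho$ to be discrete; transitivity together with compactness of $M$ makes the orbit map $\Phi_\rho\colon S\to M$, $s\mapsto\rho(x_0,s)$, a smooth surjective covering whose deck group is $\Gamma_\rho$ acting by left multiplication, so $\Gamma_\rho$ is a cocompact lattice. Since $S$ is contractible, $\Phi_\rho$ is the universal cover, giving $\Gamma_\rho\cong\pi_1(M,x_0)\cong\Gamma_0$. A lift $\tilde F\colon S\to S$ of $\Phi_\rho$ through the standard cover $p_0\colon S\to\Gamma_0\backslash S$ satisfies $\tilde F(\gamma s)=\alpha(\gamma)\tilde F(s)$ for a unique isomorphism $\alpha\colon\Gamma_\rho\to\Gamma_0$, and I assign $\rho\mapsto\alpha^{-1}\in\mca{H}(\Gamma_0,S)$.

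Next I would check descent to the double quotient. $C^\infty$--conjugacy of $\rho$ by a diffeomorphism of $M$ replaces $\Gamma_\rho$ by a conjugate $s_0^{-1}\Gamma_\rho s_0$, which is an inner automorphism of $S$, hence in $\Aut(S)$; precomposition $\rho\circ(\id\times\Phi)$ with $\Phi\in\Aut(S)$ replaces $\Gamma_\rho$ by $\Phi^{-1}(\Gamma_\rho)$; and different lifts $\tilde F$ differ by left translation by $\gamma_0\in\Gamma_0$, which conjugates $\alpha$ by $\gamma_0$ and so shifts $\alpha^{-1}$ by an element of $\Aut(\Gamma_0)$. All three are absorbed by the double coset. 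Injectivity on classes runs the same argument in reverse: two actions producing the same double coset can be intertwined by descending the corresponding equivariant map $S\to S$ to a diffeomorphism of $M$.

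The main obstacle is surjectivity. Given $\eta\in\mca{H}(\Gamma_0,S)$ with image the cocompact lattice $\Gamma=\eta(\Gamma_0)$, I need a smooth $\eta$--equivariant diffeomorphism $\tilde F\colon S\to S$, i.e.\ $\tilde F(\gamma s)=\eta(\gamma)\tilde F(s)$ for all $\gamma\in\Gamma_0$; it then descends to a diffeomorphism $F\colon M\to\Gamma\backslash S$, and pulling back right multiplication along $F$ produces an action $\rho\in\mca{A}(\Gamma_0,S)$ whose isotropy at $x_0$ is $\Gamma$ up to conjugation in $S$. The construction of $\tilde F$ is the delicate step: one first builds a smooth equivariant map by choosing a compact fundamental domain for $\Gamma_0$ on $S$, defining a map on a neighborhood of it, and extending equivariantly using a $\Gamma_0$--invariant partition of unity. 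The resulting map is automatically proper because both actions are cocompact, and as a $\pi_1$--isomorphism between closed aspherical manifolds of equal dimension it is a homotopy equivalence; the topological rigidity of compact solvmanifolds (Mostow--Auslander) then lets one deform it through $\eta$--equivariant maps into a diffeomorphism. With $\tilde F$ in hand, the two compositions of the correspondence are identities on equivalence classes by a routine verification.
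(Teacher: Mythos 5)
Your proposal is correct and follows essentially the same route as the paper: the map is $\rho\mapsto\Gamma_\rho$ (the isotropy subgroup at $x_0$), well-definedness and injectivity are routine bookkeeping with conjugation and $\Aut(S)$, and surjectivity reduces to the fact that $\Gamma\backslash S$ and $\Gamma_0\backslash S$ are diffeomorphic whenever $\Gamma\cong\Gamma_0$, which the paper obtains by citing Mostow's theorem (Raghunathan, Theorem 3.6) and which you re-derive by invoking the same rigidity of compact solvmanifolds. The extra detail you supply (constructing the equivariant homotopy equivalence before applying rigidity) is fine but is just an unpacking of that citation rather than a different argument.
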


\begin{proof}
Well-definedness and injectivity are easy. For surjectivity, take any cocompact lattice $\Gamma$ in $S$ isomorphic to $\Gamma_0$. Then by a theorem of Mostow (See for example Raghunathan \cite[Theorem 3.6]{R}), $\Gamma\backslash S$ and $\Gamma_0\backslash S=M$ are diffeomorphic. So the map is also surjective. 
\end{proof}

\begin{prop}\label{corresp}
There is a one-to-one correspondence between 
\begin{equation*}
\mca{A}(\Gamma_0,S)/\text{parameter equivalence}
\end{equation*}
and 
\begin{equation*}
\Aut(S)\backslash\mca{H}(\Gamma_0,S). 
\end{equation*}
\end{prop}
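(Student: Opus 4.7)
My plan is to construct an explicit bijection by sending $\rho\in\mca{A}(\Gamma_0,S)$ to a monodromy-type homomorphism $\alpha_\rho\in\mca{H}(\Gamma_0,S)$ built from the lift of $\rho$ to the universal cover. Since $S$ is simply connected, the universal cover of $M=\Gamma_0\backslash S$ is $S$ with $\Gamma_0$ acting by left translation, and $\rho$ lifts to a simply transitive right action $\tilde\rho$ of $S$ on $S$ normalized by $\tilde\rho(1,1)=1$. Defining $\Psi_\rho(s)$ to be the unique $t\in S$ with $\tilde\rho(1,t)=s$ produces a diffeomorphism $\Psi_\rho\colon S\to S$ fixing $1$, and combining simple transitivity of $\tilde\rho$ with the deck-commutation $\gamma\cdot\tilde\rho(u,t)=\tilde\rho(\gamma u,t)$ yields the key identity
\[
\Psi_\rho(\gamma s)=\Psi_\rho(\gamma)\,\Psi_\rho(s),\qquad\gamma\in\Gamma_0,\ s\in S.
\]
Then $\alpha_\rho:=\Psi_\rho|_{\Gamma_0}$ is a homomorphism, injective and with cocompact image because $\Psi_\rho$ descends to a diffeomorphism $M\to\alpha_\rho(\Gamma_0)\backslash S$; so $\alpha_\rho\in\mca{H}(\Gamma_0,S)$.

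Next I will show that $[\rho]\mapsto[\alpha_\rho]\in\Aut(S)\backslash\mca{H}(\Gamma_0,S)$ is well defined and injective on parameter equivalence classes. Given a parameter equivalence with data $(F,\Phi)$ between $\rho_1$ and $\rho_2$, the $C^0$-homotopy of $F$ to the identity lets me pick the lift $\tilde F\colon S\to S$ commuting with the deck $\Gamma_0$-action. Rewriting the lifted conjugacy in the coordinates supplied by $\Psi_1,\Psi_2$ gives $\Psi_2\circ\tilde F\circ\Psi_1^{-1}=L_c\circ\Phi$ for some $c\in S$, where $L_c$ denotes left translation, and substituting into deck-equivariance forces $\alpha_{\rho_2}=I_c\circ\Phi\circ\alpha_{\rho_1}$ with $I_c$ the inner automorphism by $c$; hence $[\alpha_{\rho_1}]=[\alpha_{\rho_2}]$. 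Conversely, given $\alpha_{\rho_2}=\Phi'\circ\alpha_{\rho_1}$ with $\Phi'\in\Aut(S)$, the ansatz $\tilde F:=\Psi_2^{-1}\circ\Phi'\circ\Psi_1$ is $\Gamma_0$-equivariant by direct check and realizes the required conjugacy at the cover level. For the $C^0$-homotopy through $C^\infty$ maps I will note that $s\mapsto s^{-1}\tilde F(s)$ is $\Gamma_0$-invariant and descends to a smooth $g\colon M\to S$; contractibility of $S$ (it is diffeomorphic to $\bb{R}^n$) provides a smooth homotopy through smooth maps $g_t\colon M\to S$ from the constant $1$ to $g$, and the lifts $\tilde F_t(s):=s\cdot g_t(\bar s)$ remain $\Gamma_0$-equivariant and descend to the required continuous family $F_t$ from $\id$ to $F$.

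The main obstacle, flagged by the anonymous referee in the acknowledgements, is surjectivity: every class $[\alpha]\in\Aut(S)\backslash\mca{H}(\Gamma_0,S)$ must arise from some action. Given $\alpha\in\mca{H}(\Gamma_0,S)$, the theorem of Mostow cited in the paper (Raghunathan \cite[Theorem 3.6]{R}) applied to the isomorphism $\alpha\colon\Gamma_0\to\alpha(\Gamma_0)$ of cocompact lattices in $S$ supplies a smooth diffeomorphism $\psi\colon S\to S$ satisfying $\psi(\gamma s)=\alpha'(\gamma)\psi(s)$ for some $\alpha'\in\Aut(S)\cdot\alpha$, which I further normalize so that $\psi(1)=1$ by left-translating and absorbing the resulting conjugation into an inner automorphism. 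The formula $\rho(\bar s,t):=\overline{\psi^{-1}(\psi(s)t)}$ is then well defined by the equivariance of $\psi$, transitive and locally free because it is simply transitive right multiplication at the cover level, and a direct verification gives $\Psi_\rho=\psi$ and hence $\alpha_\rho=\alpha'$. The substantive content of the proposition lies in this appeal to rigidity of lattices in simply connected solvable Lie groups; the remaining verifications are formal once base points and lifts are tracked consistently through the universal cover diagram.
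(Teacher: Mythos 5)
Your proof is correct and follows essentially the same route as the paper: your $\Psi_\rho$ is exactly the paper's cocycle $a_\rho(x_0,\cdot)$ read off on the universal cover, and your three steps (well-definedness up to an inner twist, injectivity via the explicit equivariant conjugacy plus contractibility of $S$ for the homotopy, surjectivity via Mostow-type rigidity of lattice isomorphisms) match the paper's, which invokes Witte's refinement of the Raghunathan--Mostow theorem at the same point. No gaps.
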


\begin{proof}
The definition of the map is as follows: For a transitive locally free action $M\stackrel{\rho}{\curvearrowleft}S$, let $a_\rho\colon M\times S\to S$ be the smooth map defined uniquely by $\rho_0(x,s)=\rho\left(x,a_\rho(x,s)\right)$ and $a_\rho(x,1)=1$. Then $a_\rho$ is a cocycle over $\rho_0$: 
\begin{equation*}
a_\rho\left(x,ss^\prime\right)=a_\rho(x,s)a_\rho\left(\rho_0(x,s),s^\prime\right). 
\end{equation*}
Let $\Gamma_\rho$ be the isotropy subgroup of $\rho$ at $x_0$. Then we have $a_\rho(x_0,\cdotp)\colon\Gamma_0\stackrel{\sim}{\to}\Gamma_\rho\subset S$. So we will define the map by $\rho\mapsto a_\rho(x_0,\cdotp)$. 

Let us see the well-definedness of this map. Take two transitive locally free actions $M\stackrel{\rho_i}{\curvearrowleft}S$ $(i=1,2)$ which are parameter equivalent. So there are $\Phi\in\Aut(S)$ and a diffeomorphism $F\colon M\to M$ which is homotopic to the identity such that 
\begin{equation}\label{conjugate}
F\left(\rho_1(x,s)\right)=\rho_2\left(F(x),\Phi(s)\right). 
\end{equation}
Let $b\colon M\times S\to S$ be the cocycle over $\rho_1$ defined by $\rho_1(x,s)=\rho_0\left(x,b(x,s)\right)$. Note that $s=a_{\rho_1}\left(x,b(x,s)\right)$. We have $\rho_1(x,s)=\rho_2\left(x,a_{\rho_2}\left(x,b(x,s)\right)\right)$ and then $a_{\rho_2}\left(x,b(x,s)\right)$ is a cocycle over $\rho_1$. Since $F$ is homotopic to the identity we can define a smooth map $P\colon M\to S$ by $F(x)=\rho_2(x,P(x)^{-1})$. By \eqref{conjugate}, we see 
\begin{align*}
a_{\rho_2}\left(x,b(x,s)\right)&=P(x)^{-1}\Phi(s)P\left(\rho_1(x,s)\right)\\
&=P(x)^{-1}\Phi\left(a_{\rho_1}\left(x,b(x,s)\right)\right)P\left(\rho_0\left(x,b(x,s)\right)\right), 
\end{align*}
so that 
\begin{equation*}
a_{\rho_2}(x,s)=P(x)^{-1}\Phi\left(a_{\rho_1}(x,s)\right)P\left(\rho_0(x,s)\right)
\end{equation*}
for all $s\in S$ since $b(x,\cdotp)\colon S\to S$ is invertible. Taking $x=x_0$ and $s=\gamma\in\Gamma_0$, we have 
\begin{equation*}
a_{\rho_2}(x_0,\gamma)=P(x_0)^{-1}\Phi\left(a_{\rho_1}(x_0,\gamma)\right)P(x_0). 
\end{equation*}
So the map is well-defined. 

Next let us see the injectivity. Take two transitive locally free actions $M\stackrel{\rho_i}{\curvearrowleft}S$ for $i=1,2$ for which there exists $\Phi\in\Aut(S)$ such that $a_{\rho_2}(x_0,\gamma)=\Phi\left(a_{\rho_1}(x_0,\gamma)\right)$ for every $\gamma\in\Gamma_0$. Take $b\colon M\times S\to S$ satisfying $\rho_2(x,s)=\rho_0\left(x,b(x,s)\right)$, so that $s=b\left(x,a_{\rho_2}(x,s)\right)$. 
We have $S$--equivariant diffeomorphisms $\Gamma_{\rho_i}\backslash S\stackrel{\sim}{\to}M$ mapping $\Gamma_{\rho_i}s$ to $\rho_i(x_0,s)$. Using these define a diffeomorphism $F$ by 
\begin{equation*}
\begin{tikzcd}
M\ar[d,"F"']\ar[dr,phantom,"\circlearrowright" marking]&\Gamma_{\rho_1}\backslash S\ar[d,"\Phi"]\ar[l,"\sim"']\\
M&\Gamma_{\rho_2}\backslash S. \ar[l,"\sim"']
\end{tikzcd}
\end{equation*}
Here $\Phi$ denotes the map $\Gamma_{\rho_1}s\mapsto\Gamma_{\rho_2}\Phi(s)$. 
Obviously $F\left(\rho_1(x,s)\right)=\rho_2\left(F(x),\Phi(s)\right)$. To see that $F$ is homotopic to the identity, take any $\gamma\in\Gamma_0=\pi_1(M)$ and a curve $c\colon[0,1]\to S$ connecting $1$ and $\gamma$ and consider the curve $\rho_0\left(x_0,c(\cdotp)\right)\colon[0,1]\to M$. Since $\rho_0\left(x_0,c(t)\right)=\rho_1\left(x_0,a_{\rho_1}\left(x_0,c(t)\right)\right)$, we have 
\begin{align*}
F\left(\rho_0\left(x_0,c(t)\right)\right)&=\rho_2\left(x_0,\Phi\left(a_{\rho_1}\left(x_0,c(t)\right)\right)\right)\\
&=\rho_0\left(x_0,b\left(x_0,\Phi\left(a_{\rho_1}\left(x_0,c(t)\right)\right)\right)\right). 
\end{align*}
Therefore $F_*\colon\pi_1(M)\to\pi_1(M)$ maps $\gamma$ to 
\begin{equation*}
b\left(x_0,\Phi\left(a_{\rho_1}(x_0,\gamma)\right)\right)=b\left(x_0,a_{\rho_2}(x_0,\gamma)\right)=\gamma. 
\end{equation*}
So $F$ must be homotopic to the identity. 

Finally, we see the surjectivity. Take any $\alpha\colon\Gamma_0\to S$. Put $\Gamma:=\alpha(\Gamma_0)$ and $y_0:=\Gamma\in\Gamma\backslash S$. We have an isomorphism $\alpha\colon\Gamma_0\stackrel{\sim}{\to}\Gamma$ of the fundamental groups of $(M,x_0)$ and $(\Gamma\backslash S,y_0)$. By Witte \cite[Theorem 7.4]{W} which is a refinement of Mostow's theorem used in the proof of the previous proposition, we can find a diffeomorphism $F\colon(M,x_0)\to(\Gamma\backslash S,y_0)$ which induces $\alpha\colon\Gamma_0\stackrel{\sim}{\to}\Gamma$ on the fundamental groups. Define $M\stackrel{\rho}{\curvearrowleft}S$ by $\rho(x,s)=F^{-1}\left(F(x)s\right)$. Then $\rho$ is a transitive locally free action. We have $F\left(\rho_0(x_0,s)\right)=y_0a_\rho(x_0,s)$ for all $s\in S$. Take any $\gamma_0\in\Gamma_0$. Choose a curve $s\colon[0,1]\to S$ connecting $1$ and $\gamma_0$. Then the curve $\rho_0\left(x_0,s(t)\right)$ represents $\gamma_0$ in $\pi_1(M,x_0)$. Thus the curve $y_0a_\rho\left(x_0,s(t)\right)$ represents $F_*(\gamma_0)=\alpha(\gamma_0)$ in $\pi_1(\Gamma\backslash S,y_0)$. Since the curve $a_\rho\left(x_0,s(t)\right)$ in $S$ is the lift of the curve $y_0a_\rho\left(x_0,s(t)\right)$ starting from $1$, we get $a_\rho(x_0,\gamma_0)=\alpha(\gamma_0)$. This proves the surjectivity. 
\end{proof}

\begin{prop}
The map 
\begin{equation*}
\mca{A}(\Gamma_0,S)\to\mca{H}(\Gamma_0,S)
\end{equation*}
defined in Proposition \ref{corresp} is continuous. Here $\mca{A}(\Gamma_0,S)$ is endowed with the topology induced from the $C^\infty$ compact-open topology of $C^\infty(M\times S,M)$ and $\mca{H}(\Gamma_0,S)$ has the topology of pointwise convergence. 
\end{prop}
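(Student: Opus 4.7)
Fix $\rho_1\in\mca{A}(\Gamma_0,S)$ and $\gamma\in\Gamma_0$. Since the target has the pointwise topology, continuity of $\rho\mapsto a_\rho(x_0,\cdot)|_{\Gamma_0}$ reduces to continuity of each coordinate $\rho\mapsto a_\rho(x_0,\gamma)\in S$ at $\rho_1$. The idea is to realize $a_\rho(x_0,\gamma)$ as the time-$1$ value of a solution to an ODE on $S$ whose data varies continuously with $\rho$ in the $C^\infty$ topology, and then quote continuous dependence of ODE solutions on parameters.

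Fix once and for all a smooth path $c\colon[0,1]\to S$ with $c(0)=1$ and $c(1)=\gamma$. For any $\rho\in\mca{A}(\Gamma_0,S)$, the map $s\mapsto\rho(x_0,s)$ is a covering $S\to M$ (because $\rho$ is transitive and locally free), so the simple connectivity of $S$ provides a unique smooth lift $\tilde c_\rho\colon[0,1]\to S$ of $t\mapsto\rho_0(x_0,c(t))$ with $\tilde c_\rho(0)=1$; by the defining equation $\rho_0(x,s)=\rho(x,a_\rho(x,s))$ we then have $a_\rho(x_0,\gamma)=\tilde c_\rho(1)$.

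Differentiating the identity $\rho(x_0,\tilde c_\rho(t))=\rho_0(x_0,c(t))$ and using the action rule for $\rho$ at the point $y(t):=\rho_0(x_0,c(t))$, one finds that $\tilde c_\rho$ satisfies the ODE
\begin{equation*}
(L_{\tilde c_\rho(t)^{-1}})_*\,\tilde c_\rho'(t)\;=\;(\omega_\rho)_{y(t)}\!\left(\tfrac{d}{dt}\rho_0(x_0,c(t))\right)\;=:\;A_\rho(t)\in\mf{s},
\end{equation*}
where $\omega_\rho$ denotes the canonical $1$--form of $\rho$. In other words $\tilde c_\rho$ is the development of the smooth $\mf{s}$-valued curve $A_\rho$ on $[0,1]$, and is determined by the initial condition and by $A_\rho$ alone.

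The remaining point is that $\rho\mapsto A_\rho$ is continuous from a $C^\infty$ neighborhood of $\rho_1$ into $C^0([0,1];\mf{s})$: the value $(\omega_\rho)_y$ is the inverse of the linear isomorphism $X\mapsto\frac{d}{ds}|_{s=1}\rho(y,\exp sX)$, which varies continuously in $(\rho,y)$ in the $C^\infty$ compact-open topology, and the path $y(t)$ together with its velocity is fixed and lies in the compact manifold $M$. Standard continuous dependence on parameters for the time-dependent left-invariant ODE above then yields continuity of $\rho\mapsto\tilde c_\rho$ into $C^0([0,1];S)$; evaluating at $t=1$ gives the desired continuity of $\rho\mapsto a_\rho(x_0,\gamma)$. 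The only delicate verification is the continuous dependence of $\omega_\rho$ on $\rho$, which is routine, so no serious obstacle arises.
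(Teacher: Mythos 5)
Your proposal is correct and follows essentially the same route as the paper: fix a smooth curve $c$ from $1$ to $\gamma$, lift $\rho_0(x_0,c(t))$ through the covering $\rho(x_0,\cdot)\colon S\to M$, and observe that $a_\rho(x_0,\gamma)$ is the endpoint of the lift. The only difference is that the paper simply asserts the continuity of $\rho\mapsto c_\rho$, whereas you justify it by writing the lift as the solution of a left-invariant ODE with data $(\omega_\rho)_{y(t)}(y'(t))$ and invoking continuous dependence on parameters — a correct and welcome filling-in of that step.
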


\begin{proof}
Take any $\gamma_0\in\Gamma_0$. We must show that $a_\rho(x_0,\gamma_0)\in S$ is continuous with respect to $\rho$. The map $\mca{A}(\Gamma_0,S)\to C^\infty(S,M)$ which sends $\rho$ to $\rho(x_0,\cdot)$ is continuous. Take a $C^\infty$--curve $c\colon[0,1]\to S$ connecting $1$ and $\gamma_0$. Let $c_\rho\colon[0,1]\to S$ be the lift of the curve $\rho_0\left(x_0,c(t)\right)$ with respect to the covering $\rho(x_0,\cdot)\colon S\to M$ starting at $1$. Then $a_\rho(x_0,\gamma_0)=c_\rho(1)$. Then the map $\mca{A}(\Gamma_0,S)\to C^\infty\left([0,1],S\right)$ taking $\rho$ to $c_\rho$ is continuous. In particular, $\rho\mapsto c_\rho(1)=a_\rho(x_0,\gamma_0)$ is continuous. 
\end{proof}

\begin{prop}
We have a commutative diagram 
\begin{equation*}
\begin{tikzcd}
\mca{A}(\Gamma_0,S)/\text{parameter equivalence}\ar[d,twoheadrightarrow]\ar[dr,phantom,"\circlearrowright" marking]\ar[r,"\sim"]&\Aut(S)\backslash\mca{H}(\Gamma_0,S)\ar[d,twoheadrightarrow]\\
\mca{A}(\Gamma_0,S)/\left(\text{$C^\infty$--conjugacy $+$ $\Aut(S)$}\right)\ar[r,"\sim"]&\Aut(S)\backslash\mca{H}(\Gamma_0,S)/\Aut(\Gamma_0), 
\end{tikzcd}
\end{equation*}
where the horizontal maps are defined above and the vertical surjective maps are defined obviously. 
\end{prop}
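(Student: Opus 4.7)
The plan is first to note that both horizontal arrows are already bijections (the bottom one by the first proposition of this subsection, the top one by Proposition \ref{corresp}), so only three things remain: (i) both vertical maps are well-defined surjections, (ii) the square commutes, (iii) no extra compatibility is needed for the identification of $\Aut(S)\backslash\mca{H}(\Gamma_0,S)/\Aut(\Gamma_0)$ with the set of $\Aut(S)$--orbits of lattices $\Gamma\subset S$ isomorphic to $\Gamma_0$.

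For the left vertical arrow, I would observe that parameter equivalence is a finer equivalence relation than ``$C^\infty$--conjugacy $+$ $\Aut(S)$'': indeed in the definition of parameter equivalence one requires not only a pair $(F,\Phi)\in\mathrm{Diff}(M)\times\Aut(S)$ intertwining the actions but in addition $F$ to be homotopic to the identity through leaf-preserving maps. Dropping that last condition gives exactly ``$C^\infty$--conjugacy $+$ $\Aut(S)$''. Hence the quotient $[\rho]_{\mathrm{param}}\mapsto[\rho]_{C^\infty+\Aut(S)}$ is well-defined and surjective. For the right vertical arrow, it is literally the quotient map by the right $\Aut(\Gamma_0)$--action on $\Aut(S)\backslash\mca{H}(\Gamma_0,S)$, which is again well-defined and surjective.

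For commutativity, I would chase a class $[\rho]_{\mathrm{param}}\in\mca{A}(\Gamma_0,S)/\text{parameter equivalence}$ along both paths. Going right then down sends $[\rho]_{\mathrm{param}}$ to the class of the homomorphism $a_\rho(x_0,\cdot)\colon\Gamma_0\to S$ modulo $\Aut(S)$, then further quotients by precomposition with $\Aut(\Gamma_0)$, which under the identification of the first proposition corresponds to the $\Aut(S)$--orbit of the image subgroup $a_\rho(x_0,\Gamma_0)\subset S$. Going down then right sends $[\rho]_{\mathrm{param}}$ first to $[\rho]_{C^\infty+\Aut(S)}$ and then, by the bottom horizontal bijection, to the $\Aut(S)$--orbit of the isotropy subgroup $\Gamma_\rho$ of $\rho$ at $x_0$. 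Thus commutativity reduces to the identity
\begin{equation*}
a_\rho(x_0,\Gamma_0)=\Gamma_\rho,
\end{equation*}
which is essentially definitional: for $\gamma\in\Gamma_0$ the equation $\rho_0(x_0,\gamma)=x_0=\rho(x_0,a_\rho(x_0,\gamma))$ shows $a_\rho(x_0,\gamma)\in\Gamma_\rho$, and conversely if $s\in\Gamma_\rho$, letting $b(x,\cdot)$ be the inverse of $a_\rho(x,\cdot)$ yields $\rho_0(x_0,b(x_0,s))=\rho(x_0,s)=x_0$, so $b(x_0,s)\in\Gamma_0$ and $s=a_\rho(x_0,b(x_0,s))\in a_\rho(x_0,\Gamma_0)$.

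The main (minor) obstacle is simply bookkeeping: keeping straight that the bottom bijection is realized via $\rho\mapsto\Gamma_\rho$ while the top bijection is realized via $\rho\mapsto a_\rho(x_0,\cdot)$, and matching the two via the identification $\Aut(S)\backslash\mca{H}(\Gamma_0,S)/\Aut(\Gamma_0)\cong\Aut(S)\backslash\{\text{lattices}\cong\Gamma_0\}$ sending $[\alpha]\mapsto[\alpha(\Gamma_0)]$. None of the steps involves a new analytical input beyond what the two preceding propositions already supply.
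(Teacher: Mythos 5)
Your proposal is correct and fills in exactly the details the paper leaves implicit (the paper's own proof is literally ``This is obvious.''): the commutativity reduces to the identity $a_\rho(x_0,\Gamma_0)=\Gamma_\rho$, which you verify correctly using the bijectivity of $a_\rho(x_0,\cdot)$, and the well-definedness of the left vertical map follows, as you say, from parameter equivalence being a refinement of $C^\infty$--conjugacy $+$ $\Aut(S)$. No gap; this is the same (and only) route, carried out explicitly.
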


\begin{proof}
This is obvious. 
\end{proof}

By combining Proposition \ref{corresp} with Theorem \ref{SC}, we get the following. 

\begin{cor}
Let $S$ be a connected simply connected solvable Lie group, $\Gamma_0$ be a lattice in $S$ and $\mf{h}$ be a subspace between $[\mf{s},\mf{s}]$ and $\mf{n}$. If 
\begin{equation*}
H^1\left(\Gamma_0\backslash S;\mf{s}\stackrel{\ad\circ\varphi}{\curvearrowright}\Gr_\mf{h}(\mf{s})\right)=H^1\left(\mf{s};\mf{s}\stackrel{\ad\circ\varphi}{\curvearrowright}\Gr_\mf{h}(\mf{s})\right)
\end{equation*}
for all surjective Lie algebra homomorphisms $\varphi\colon\mf{s}\to\mf{s}/\mf{h}$, then any injective homomorphism $\alpha\colon\Gamma_0\to S$ whose image is a lattice, is transformed into the inclusion $\Gamma_0\hookrightarrow S$ by an element of $\Aut(S)$. In particular, if $\Gamma$ is a lattice in $S$ isomorphic to $\Gamma_0$, there is an isomorphism of $S$ which transforms $\Gamma$ into $\Gamma_0$. 
\end{cor}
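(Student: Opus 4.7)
The plan is to reduce the corollary to Theorem~\ref{SC} applied to the natural right action $\Gamma_0\backslash S\stackrel{\rho_0}{\curvearrowleft}S$, together with Proposition~\ref{corresp}.

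First I would observe that since $\rho_0$ is transitive, its orbit foliation $\mca{F}$ has exactly one leaf, namely $M=\Gamma_0\backslash S$ itself. Consequently the leafwise complex $\Omega^*(\mca{F};V)$ coincides with the ordinary $V$-valued de Rham complex on $M$, and the flat leafwise connection $\df+\pi\omega_0\wedge$ is an honest flat connection on $M\times V\to M$. Hence
\begin{equation*}
H^*\!\left(\mca{F};\mf{s}\stackrel{\pi}{\curvearrowright}V\right)=H^*\!\left(\Gamma_0\backslash S;\mf{s}\stackrel{\pi}{\curvearrowright}V\right),
\end{equation*}
which identifies the cohomologies in the hypothesis of Theorem~\ref{SC} with those in the hypothesis of the corollary. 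Similarly, any locally free $S$-action on $M$ with orbit foliation $\mca{F}$ is automatically transitive, so $A(\mca{F},S)=\mca{A}(\Gamma_0,S)$.

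Second, Theorem~\ref{SC} then yields that $\rho_0$ is parameter rigid, so every $\rho\in\mca{A}(\Gamma_0,S)$ is parameter equivalent to $\rho_0$, i.e.\ $\mca{A}(\Gamma_0,S)/\text{parameter equivalence}$ is a singleton. Under the bijection of Proposition~\ref{corresp}, the class of $\rho_0$ maps to the $\Aut(S)$-orbit of $a_{\rho_0}(x_0,\cdot)$; since $a_{\rho_0}(x,s)=s$ for the right action, this is precisely the $\Aut(S)$-orbit of the inclusion $\Gamma_0\hookrightarrow S$. Therefore $\Aut(S)\backslash\mca{H}(\Gamma_0,S)$ is also a singleton, consisting of that orbit, which gives the first assertion: for any $\alpha\in\mca{H}(\Gamma_0,S)$ there exists $\Phi\in\Aut(S)$ with $\Phi\circ\alpha$ equal to the inclusion. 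The ``in particular'' clause is immediate: given a lattice $\Gamma\subset S$ isomorphic to $\Gamma_0$, choose an abstract isomorphism $\alpha\colon\Gamma_0\stackrel{\sim}{\to}\Gamma\subset S$, apply the first assertion, and conclude $\Phi(\Gamma)=\Gamma_0$.

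There is no serious obstacle here, since both Theorem~\ref{SC} and Proposition~\ref{corresp} are already established; the only point requiring care is the identification of the two cohomology groups, which is straightforward once one notes that the orbit foliation of a transitive action reduces to the trivial one-leaf foliation.
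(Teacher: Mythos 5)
Your proposal is correct and is exactly the argument the paper intends: the paper offers no separate proof of this corollary beyond the remark that it follows by combining Proposition \ref{corresp} with Theorem \ref{SC}, and your write-up supplies precisely the missing details (the one-leaf foliation identifies leafwise cohomology with the twisted de Rham cohomology of $\Gamma_0\backslash S$, the equality $A(\mca{F},S)=\mca{A}(\Gamma_0,S)$, and the fact that the class of $\rho_0$ corresponds to the inclusion since $a_{\rho_0}(x_0,\cdot)=\mathrm{id}$). The only point you leave implicit is that a lattice in a connected simply connected solvable Lie group is automatically cocompact, so that $\Gamma_0\backslash S$ is a closed manifold and Theorem \ref{SC} applies; this is standard and consistent with the paper's usage.
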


\subsection{A counterexample to Theorem \ref{nil} for solvable Lie groups}
Now we see a counterexample to Theorem \ref{nil} for solvable Lie groups. The next example is taken from Example 2.2 of Baues and Klopsch \cite{BK}, which is due to Milovanov \cite{Milo}. Consider $A_0=
\begin{pmatrix}
0&-1\\
1&3
\end{pmatrix}
\in\SL(2,\bb{Z})$ which has $\lambda=\frac{3+\sqrt{5}}{2}$ and $\lambda^{-1}=\frac{3-\sqrt{5}}{2}$ as eigenvalues and let 
\begin{equation*}
X(t)=
\begin{pmatrix}
\lambda^t\cos2\pi t&-\lambda^t\sin2\pi t&&\\
\lambda^t\sin2\pi t&\lambda^t\cos2\pi t&&\\
&&\lambda^{-t}&\\
&&&\lambda^{-t}
\end{pmatrix}
\end{equation*}
for $t\in\bb{R}$. Since $X(1)$ and 
$A=
\begin{pmatrix}
A_0&\\
&A_0
\end{pmatrix}
\in\SL(4,\bb{Z})$ 
are conjugate, the solvable Lie group $S=\bb{R}^4\rtimes_{X(t)}\bb{R}$ contains a lattice $\Gamma_0$ isomorphic to $\bb{Z}^4\rtimes_{A}\bb{Z}$. Then the set 
\begin{equation*}
\mca{H}(\Gamma_0,S)/\Aut(S)
\end{equation*}
is uncountable as in Baues and Klopsch \cite{BK}. Therefore by Proposition \ref{corresp} the set 
\begin{equation*}
\mca{A}(\Gamma_0,S)/\text{parameter equivalence}
\end{equation*}
is uncountable, so that the natural action $\Gamma_0\backslash S\curvearrowleft S$ is not parameter rigid. However by computing $\Gamma_0/[\Gamma_0,\Gamma_0]$ using $\Gamma_0\simeq\bb{Z}^4\rtimes_{A}\bb{Z}$ we can show $H^1(\Gamma_0\backslash S)=H^1(\mf{s})$.

\subsection{A locally parameter rigid action of a contractible group which is not parameter rigid}
Let $M\stackrel{\rho_0}{\curvearrowleft}S$ be an action with the orbit foliation $\mca{F}$. The set of all smooth actions $M\curvearrowleft S$ with the orbit foliation $\mca{F}$ is denoted by $A(\mca{F},S)$, which is endowed with the topology induced from the $C^\infty$ compact-open topology of $C^\infty(M\times S,M)$. An action $M\stackrel{\rho_0}{\curvearrowleft}S$ is said to be {\em locally parameter rigid} if any $\rho\in A(\mca{F},S)$ which is close enough to $\rho_0$ is parameter equivalent to $\rho_0$. In \cite[Section 1.1.2]{A}, Asaoka comments that there is no known locally parameter rigid action of a contractible group which is not parameter rigid. Here we give an example of such an action. In \cite[Example 2.5]{BK}, Baues and Klopsch give an example of a connected simply connected solvable Lie group $S$ which has a Zariski dense lattice $\Gamma_0$ such that $\Aut(S)\backslash\mca{H}(\Gamma_0,S)$ is countably infinite. Here Zariski density means that the Zariski closures of $\Ad(\Gamma_0)$ and $\Ad(S)$ in $\GL(\mf{s})$ coincide. If we take such $S$ and $\Gamma_0$ then the action $\Gamma_0\backslash S\stackrel{\rho_0}{\curvearrowleft}S$ by right multiplication is not parameter rigid by Proposition \ref{corresp}. Let us see that this action is locally parameter rigid. Since $\Gamma_0$ is a Zariski dense lattice in $S$, the inclusion $\iota\colon\Gamma_0\hookrightarrow S$ is {\em locally rigid}, that is, the $\Aut(S)$--orbit of $\iota$ in $\mca{H}(\Gamma_0,S)$ is a neighborhood of $\iota$. See for example Baues and Klopsch \cite[Theorem 1.9]{BK}. Take an open neighborhood $U$ of $\iota$ in $\mca{H}(\Gamma_0,S)$ which is contained in the $\Aut(S)$--orbit of $\iota$. Let $V$ be the inverse image of $U$ by the map $\mca{A}(\Gamma_0,S)\to\mca{H}(\Gamma_0,S)$, then $V$ is an open neighborhood of $\rho_0$ in $\mca{A}(\Gamma_0,S)$ by continuity. Since $U$ projects to a one-point set in $\Aut(S)\backslash\mca{H}(\Gamma_0,S)$, $V$ also projects to a one-point set in $\mca{A}(\Gamma_0,S)/\text{(parameter equivalence)}$. Therefore $\rho_0$ is locally parameter rigid.

\end{document}